\definecolor{IthacaGreen}{RGB}{25,155,0}  
\definecolor{FallsBlue}{RGB}{43,112,226}
\tikzset{diagram/.style={matrix of math nodes, inner sep=0pt, row
    sep=#1, column sep=2.5em, text height=1.5ex, text depth=.25ex,
    nodes={inner sep=1ex}}}
\tikzset{diagram/.default=2.5em}
\newcommand\diagram{\path node[diagram]}
\renewcommand\labelenumi{(\roman{enumi})}
\renewcommand\theenumi\labelenumi
\newtheorem{thm}{Theorem}[section]
\newtheorem{lem}[thm]{Lemma}
\newtheorem{cor}[thm]{Corollary}
\newtheorem{prop}[thm]{Proposition}
\newtheorem{defn}[thm]{Definition}
\newtheorem*{conjCeta}{Conjecture~$C_{\eta}$}
\newcommand{\ZZ}{\mathbb{Z}} 
\newcommand{\QQ}{\mathbb{Q}}
\newcommand{\PP}{\mathscr{P}}
\newcommand{\RR}{\mathbb{R}}
\newcommand{\FF}{\mathbb{F}}
\newcommand{\pp}{\mathfrak{p}} 
\newcommand{\bb}{\mathfrak{b}} 
\newcommand{\qq}{\mathfrak{q}} 
\newcommand{\aaa}{\mathfrak{a}} 
\newcommand{\mmm}{\mathfrak{m}}
\newcommand{\Stab}{\operatorname{Stab}}
\newcommand{\surj}{\twoheadrightarrow}
\newcommand{\inj}{\hookrightarrow}
\newcommand{\defeq }{\vcentcolon=}
\newcommand{\nequiv}{\not\equiv}
\newcommand{\ism}{\cong}
\newcommand{\inv}{^{-1}}
\newcommand{\MK}{\mathbf{M}_4}
\newcommand{\OO}{\mathcal{O}}
\newcommand{\ONE}{\textbf{1}}
\newcommand{\leg}[2]{\genfrac{(}{)}{}{}{#1}{#2}}
\newcommand{\mm}{\mathfrak{m}}
\newcommand{\DD}{\mathcal{D}}
\newcommand{\CL}{\mathrm{Cl}}
\newcommand{\parm}{\makebox[1.25ex]{\textbf{$\cdot$}}}
\DeclareMathOperator{\im}{im}
\DeclareMathOperator{\tr}{Tr}
\DeclareMathOperator{\GL}{GL}
\DeclareMathOperator{\spin}{spin}
\newcommand{\Norm}{\mathfrak{N}}
\DeclareMathOperator{\ord}{ord}
\DeclareMathOperator{\Gal}{Gal}
\DeclareMathOperator{\Art}{Art}
\newcommand{\preArt}[2]{\mathcal{A}_{#1}^{#2}}
\newcommand{\totpos}[1]{{#1}\succ 0}
    \newcommand{\switchAprint}[1]{}
\title {A Density of Ramified Primes}
\author{Stephanie Chan}
\address[Stephanie Chan]{Department of Mathematics,
University College London}
\email{\href{mailto:stephanie.chan.16@ucl.ac.uk}{stephanie.chan.16@ucl.ac.uk}}
\author{Christine McMeekin}
\address[Christine McMeekin]{Max-Planck-Institut f\"ur Mathematik
}
\email{\href{mailto:christine.mcmeekin@gmail.com}{christine.mcmeekin@gmail.com}}
\author{Djordjo Milovic}
\address[Djordjo Milovic]{Department of Mathematics,
University College London}
\email{\href{mailto:dzm656@gmail.com}{dzm656@gmail.com}}
\date{\today}
\begin{document}

\begin{abstract}
Let $K$ be a cyclic number field of odd degree over $\QQ$ with odd narrow class number, such that $2$ is inert in $K/\QQ$. We define a family of number fields $\{K(p)\}_p$, depending on $K$ and indexed by the rational primes $p$ that split completely in $K/\QQ$, in which $p$ is always ramified %in $K(p)$ 
of degree $2$. Conditional on a standard conjecture on short character sums, the density of such rational primes $p$ that exhibit one of two possible ramified factorizations in $K(p)/\QQ$ is strictly between $0$ and $1$ and is given explicitly as a formula in terms of the degree of the extension $K/\QQ$. Our results are unconditional in the cubic case. Our proof relies on a detailed study of the joint distribution of spins of prime ideals.
\end{abstract}

\maketitle
\tableofcontents

\section{Introduction}\label{sec:intro}

%\begin{enumerate}
%    \item[(P1)] $K/\QQ$ is Galois, $K$ is totally real, and $\CL^+ = \CL$;
%    \item[(P2)] the class number $h=|\CL|$ of $K$ is odd; 
%    \item[(P3)] the degree $n$ of $K/\QQ$ is odd;
%    \item[(P4)] the Galois group $\Gal(K/\QQ)$ is cyclic; and
%    \item[(P5)] the prime $2$ is inert in $K/\QQ$.
%\end{enumerate}

%% Lemma 2.1: (P1) and (P2) iff Galois, totally real, and odd h+

%% (P1) and (P4) -No way to rephrase (P1) and (P4) in a way more consistent with the new equivalent assumptions. (Section 3 regarding FIMR results. Only P1 needed for KM results).

Given a number field $K$, let $\OO$, $\CL$, and $\CL^+$ denote its ring of integers, its class group, and its narrow class group, respectively. We will prove certain density theorems for number fields $K$ satisfying the following conditions:
\begin{enumerate}
    \item[(C1)] $K/\QQ$ is Galois with cyclic Galois group;
    \item[(C2)] $[K:\QQ]$ is odd;
    \item[(C3)] $h^+ =|\CL^+|$ is odd;
    \item[(C4)] the prime $2$ is inert in $K/\QQ$.
\end{enumerate}

Recall that $\CL^+$ is the quotient of the group of invertible fractional ideals of $K$ by the subgroup of principal fractional ideals that can be generated by a totally positive element; in other words, $\CL^+$ is the ray class group of conductor equal to the product of all real places. If $\alpha\in K$ is totally positive, i.e., if $\sigma(\alpha)>0$ for all real embeddings $\sigma:K\hookrightarrow \RR$, we will sometimes write $\alpha\succ 0$. 
If $h^+$ is odd for a totally real field, then 
\begin{equation}\label{eqn:UT2}\switchAprint{eqn:UT2}
    \OO^{\times}_{+} := \{u\in\OO^{\times}: u\succ 0\} = \left(\OO^{\times}\right)^2.
\end{equation}
Notice that if $K/\QQ$ is an odd degree Galois extension, then $K/\QQ$ is totally real. Since $[\CL^+:\CL]$ is always a power of $2$, the condition (C3) implies that $\CL^+= \CL$.
Therefore the conditions that $K/\QQ$ is Galois, satisfying (C2) and (C3) together imply the following:
\begin{enumerate}
  \item[(P1)] $K/\QQ$ is Galois, $K$ is totally real, and $\CL^+ = \CL$.
\end{enumerate}
 If $K$ is totally real, then $\CL^+ = \CL$ if and only if every totally positive unit in $\OO$ is a square; see Lemma~\ref{RCFh}. Hence, property (P1) can be restated as
\begin{enumerate}
    \item[(P1)] $K/\QQ$ is Galois, $K$ is totally real, and $\OO^{\times}_{+} := \{u\in\OO^{\times}: u\succ 0\} = \left(\OO^{\times}\right)^2$.
\end{enumerate}

% Galois and odd degree implies totally real
Number fields satisfying property (C1) and (P1) were studied by Friedlander, Iwaniec, Mazur, and Rubin \cite{FIMR}. More precisely, Friedlander et al.\ proved that if $\sigma$ is a (fixed) generator of $\Gal(K/\QQ)$, then the density of principal prime ideals $\pi\OO$ that split in the quadratic extension $K(\sqrt{\sigma(\pi)})/K$ is equal to $1/2$. Koymans and Milovic \cite{JDS} extended the results of Friedlander et al.\ in two different aspects. First, the number field $K$ now needs to satisfy only property (P1), i.e., $K/\QQ$ need not be cyclic; second, density theorems about the splitting behavior of principal prime ideals are proved for multi-quadratic extensions of the form $K(\{\sqrt{\sigma(\pi)}: \sigma\in S\})/K$, where $S$ is a fixed subset of $\Gal(K/\QQ)$ with the property that $\sigma\not \in S$ whenever $\sigma^{-1}\in S$.

Our main goal is to further extend these results to a certain setting where $S = \Gal(K/\QQ)\setminus\{1\}$; in this setting, we in fact have $\sigma\in S$ whenever $\sigma^{-1}\in S$, and so our work features a new interplay of the Chebotarev Density Theorem and the method of sums of type I and type II. In particular, the densities appearing in our main theorems are of greater complexity than those appearing in \cite{FIMR} or \cite{JDS}.

Another innovation in our work is that by assuming property (C3), we are now also able to study the splitting behavior of \textit{all} prime ideals, and not only those that are principal. While our generalization of ``spin'' to non-principal ideals may appear innocuous (see Definition~\ref{defn:spin}), it is of note that it still encodes the relevant splitting information as well as that the study of its oscillations requires new ideas, carried out in Section~\ref{sec:dFR}.

Let $K$ be a number field satisfying properties (P1) and (C3), and let $p$ be a rational prime that splits completely in $K/\QQ$. We will now define an extension $K(p)/\QQ$ where $p$ ramifies; this extension was first studied by McMeekin \cite{CMthesis}. 
Let $\pp$ be an unramified prime ideal of degree one in $\OO$. Let $R_{\pp}^{+}$ denote the maximal abelian extension of $K$ unramified at all finite primes other than $\pp$; in other words, $R_{\pp}^{+}$ is the ray class field of $K$ of conductor $\pp\infty$, where $\infty$ denotes the product of all real places of $K$. There is a unique subfield $K(\pp)\subset R_{\pp}^{+}$ of degree $2$ over $K$ when $\pp$ is prime to $2$ (see Lemma~\ref{Uquadsub}). Finally, we define $K(p)$ to be the compositum of $K(\pp)$ over all primes $\pp$ lying above $p$, i.e.,
$$
K(p) = \prod_{\pp|p}K(\pp).
$$
As $K(p)/\QQ$ is Galois, the residue field degree $f_{K(p)/\QQ}(p)$ of $p$ in $K(p)/\QQ$ is well-defined. Our goal is to study the distribution of $f_{K(p)/\QQ}(p)$ as $p$ varies.
Note that because $p$ splits completely in $K/\QQ$, $f_{K(p)/\QQ}(p)$ is equal to the residue field degree $f_{K(p)/K}(\pp)$ of $\pp$ in $K(p)/\QQ$ for any prime $\pp$ of $K$ lying above $p$. 
Furthermore, $f_{K(p)/\QQ}(p)=f_{K(p)/K}(\pp)$ must divide $2$ since $[K(p):K]$ is a power of $2$ and there are no cyclic subgroups of $\Gal(K(p)/K)$ of order greater than $2$.

To state our main results, we now introduce the relevant notions of density. For sets of primes $A\subseteq B$, we define the restricted density of $A$ (restricted to $B$) to be
\[
d(A|B):= \lim_{N\to\infty} \frac{\#A|_N}{\#B|_N}.
\]
where $A|_N:=\{p \in A: \Norm(p)<N\}$ and $B|_N$ is defined similarly. When $\Pi$ consists of all but finitely many primes, then $d(A):=d(A|\Pi)$ is the usual natural density of~$A$.

Let $\PP_\QQ^{2}$ denote the set of rational primes co-prime to $2$. For a fixed sign, $\pm$, we define the following sets of rational primes.
\[
\begin{array}{ l l}
 S & \defeq \{p \in \PP_\QQ^{2}: p \text{ splits completely in } K/\QQ\}, \\
 S_{\pm} & \defeq\{p\in S: p \equiv \pm 1 \bmod 4\ZZ\}, \\
 F & \defeq \{ p \in S: f_{K(p)/\QQ}(p) = 1\}, \\
 F_{\pm} & \defeq S_{\pm} \cap F. 
\end{array}
\]

Our main results are conditional on the following conjecture, a slight variant of which appears in both \cite{FIMR} and \cite{JDS}. In the following conjecture, the real number $\eta\in(0, 1]$ plays the role of $1/n$ from \cite[Conjecture~$C_n$, p.\ 738-739]{FIMR}.

\begin{conjCeta}[{\cite[Conjecture~$C_n$]{FIMR}}]
\label{Cm}\switchAprint{Cm}
Let $\eta$ be a real number satisfying $0<\eta\leq 1$. Then there exists a real number $\delta=\delta(\eta)>0$ such that for all $\epsilon>0$ there exists a real number $C=C(\eta, \epsilon)>0$ such that for all integers $Q\geq 3$, all real non-principal characters $\chi$ of conductor $q\leq Q$, all integers $N\leq Q^{\eta}$, and all integers $M$, we have  
\[
\left|\sum_{M<a\leq M+N}\chi(a)\right|\leq C Q^{\eta(1-\delta)+\epsilon}.
\]
\end{conjCeta}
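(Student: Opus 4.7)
The strategy is to reduce the conjecture to Burgess's classical bound for short character sums, which is the sharpest unconditional tool currently available. For any integer $r \geq 1$ and any non-principal character $\chi$ modulo $q$, Burgess proved the uniform estimate
\[
\left| \sum_{M < a \leq M+N} \chi(a) \right| \ll_r N^{1 - 1/r} \, q^{(r+1)/(4r^{2}) + \epsilon},
\]
which rests on shifting-and-averaging arguments and the Weil bounds for complete Jacobi-type sums over $\FF_p$. I would take this estimate as a black box, observing that its uniformity over non-principal $\chi$ (real $\chi$ being a special case) and over all translates $M$ matches exactly what the conjecture demands.

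Specialising to the hardest case $N = Q^{\eta}$ with $q = Q$, Burgess yields an exponent of $\eta(1 - 1/r) + (r+1)/(4r^{2}) + \epsilon$. Matching this against the target exponent $\eta(1-\delta) + \epsilon$ reduces to arranging $\eta/r > (r+1)/(4r^{2})$, i.e.\ $\eta > (r+1)/(4r)$. Since $(r+1)/(4r) \to 1/4$ monotonically from above as $r\to\infty$, such an $r$ exists precisely when $\eta > 1/4$; optimising (roughly $r \approx 2/(4\eta - 1)$) then yields an explicit admissible $\delta = \delta(\eta) > 0$, with the outer $C(\eta,\epsilon)$ absorbing the $\ll_r$ constant. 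This would establish Conjecture $C_{\eta}$ throughout $\eta \in (1/4, 1]$, and is presumably the regime inhabited by the cubic case mentioned in the abstract, accounting for the authors' unconditional results there.

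The principal obstacle is the complementary range $\eta \leq 1/4$, the well-known \emph{Burgess barrier}. Here Burgess's exponent ceases to beat the trivial bound $N$, and even the Generalised Riemann Hypothesis falls short of giving a power saving uniformly for character sums of length below $q^{1/4}$. Breaking the barrier would require genuinely new input: refinements of the Burgess method (as in work of Heath-Brown, Conrey--Iwaniec, and others), hybrid bounds exploiting the specific structure of real (quadratic) characters, or moment and large-sieve techniques combined with Weil-type estimates. As none of these approaches currently delivers an unconditional power saving throughout $(0, 1/4]$, Conjecture $C_\eta$ must remain a hypothesis in general, which is precisely why the authors phrase their higher-degree density theorems conditionally.
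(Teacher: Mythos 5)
Conjecture~$C_\eta$ is, as its name and the paper's framing indicate, an assumption the paper does not attempt to prove; there is no ``paper's own proof'' to compare against. The paper simply remarks, after stating the conjecture, that it holds for $\eta > 1/4$ by Burgess's inequality, remains open for $\eta \leq 1/4$, and (for sums starting at $M=0$) follows for every $\eta$ from GRH. Your proposal correctly identifies exactly this state of affairs: your Burgess bookkeeping (requiring $\eta > (r+1)/(4r)$, hence any $\eta > 1/4$ by taking $r$ large, with an explicit admissible $\delta$) is right, and your diagnosis of the Burgess barrier at $\eta = 1/4$ as the genuine obstruction matches the literature and the paper's own remarks. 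One refinement worth noting: you say GRH ``falls short of giving a power saving uniformly'' below $q^{1/4}$. That is correct for arbitrary $M$ (subtracting two long partial sums $\sum_{n\le M+N}\chi(n)-\sum_{n\le M}\chi(n)$ gives only $O((M+N)^{1/2+\epsilon})$, which is useless once $M \gg N^{2}$), but the paper points out that the anchored case $M=0$ \emph{is} settled by GRH for every $\eta$, with $\delta = 1/2$; it is precisely the uniformity in $M$ that puts the conjecture beyond GRH in the range $\eta \le 1/4$. With that caveat, your write-up is consistent with the paper's treatment; neither you nor the authors claim more than Burgess can deliver.
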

We note that Conjecture~$C_{\eta}$ is known for $\eta>1/4$, as a consequence of the classical Burgess's inequality \cite{Burgess}, and remains open for $\eta\leq 1/4$. Moreover, for sums as above starting at $M = 0$, Conjecture~$C_{\eta}$ (for any $\eta$) is a consequence of the Grand Riemann Hypothesis for the $L$-function $L(s, \chi)$. We are now ready to state our main results.

\begin{restatable}{thm}{DRPformulas}\label{thm:DRP}\switchAprint{thm:DRP} Let $K$ be a number field satisfying conditions (C1)-(C4).
Assume Conjecture~$C_{\eta}$ holds for $\eta = \frac{2}{n(n-1)}$. For $k\neq 1$ dividing $n$ let $d_k$ be the order of $2$ in $(\ZZ/k)^\times$. Then for a fixed sign $\pm$,
\[
d(F_\pm|S_\pm)=\frac{s_\pm}{2^{3(n-1)/2}}, %\quad d(F_-|S_-)= \frac{s_-}{2^{3(n-1)/2}}, 
\quad \text{and} \quad d(F|S) = \frac{s_++s_-}{2^{(3n-1)/2}}
\]
where 
\[s_+=1+\prod_{\substack{k\mid n \\ d_k\text{odd}\\ k\neq 1}}2^{\frac{\phi(k)}{2d_k}}\left(\prod_{\substack{k\mid n \\ d_k\text{odd}\\ k\neq 1}}2^{\frac{\phi(k)}{2}}-1\right),\]
and
\[s_-=\prod_{\substack{k\mid n \\ d_k\text{even}\\ k\neq 1}}(2^{d_k/2}+1)^{\frac{\phi(k)}{d_k}}\prod_{\substack{k\mid n \\ d_k\text{odd}\\ k\neq 1}}(2^{d_k}-1)^{\frac{\phi(k)}{2d_k}},\]
where $\phi$ denotes the Euler's totient function. 
In particular, when $n$ is prime, writing $d=d_n$,
\[
(s_+,s_-)
=\begin{cases}
\left(1+2^{\frac{n-1}{2d}}(2^{\frac{n-1}{2}}-1),\ (2^d-1)^{\frac{n-1}{2d}}\right)
& \text{ if }d\text{ is odd,}\\
\hfil \left(1,\ (2^{\frac{d}{2}}+1)^{\frac{n-1}{d}}\right) & \text{ if }d\text{ is even.}
\end{cases}
\] 
\end{restatable}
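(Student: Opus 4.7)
The plan has three parts: reduce $p \in F_\pm$ to a condition on a vector of generalized spin symbols; prove joint equidistribution of that vector over its permissible range; and extract the explicit density by a module-theoretic count.

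\emph{Spin reduction.} Write $n = [K:\QQ]$ and $G = \Gal(K/\QQ)$. For $p \in S$ and a prime $\pp \mid p$ of $\OO$, I would show $f_{K(p)/\QQ}(p) = 1$ is equivalent to $\pp$ splitting completely in each of the quadratic extensions $K(\sigma\pp)/K$ for $\sigma \in G \setminus \{1\}$; the case $\sigma = 1$ is automatic because $\pp$ is totally ramified in $K(\pp)/K$. Using the ray-class-field description of $K(\sigma\pp)$ together with condition (C3), which allows one to pass from the possibly non-principal $\pp$ to a totally positive generator of an appropriate power of $\pp$, this splitting condition is equivalent to $\spin(\sigma, \pp) = +1$ in the sense of Definition~\ref{defn:spin}; Galois equivariance then makes the condition independent of the chosen $\pp \mid p$. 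Hence $p \in F$ if and only if the spin vector $\vec s(\pp) := (\spin(\sigma, \pp))_{\sigma \neq 1}$ is identically $+1$.

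\emph{Joint equidistribution (the main obstacle).} As $p$ varies, $\vec s(\pp)$ lies in a subset $V_\pm \subseteq \{\pm 1\}^{G \setminus \{1\}}$ cut out by two structural relations: the $G$-action on primes above $p$ induces a cyclic permutation symmetry on entries, and quadratic reciprocity in $K$ (totally real, $2$ inert) relates $\spin(\sigma, \pp)$ to $\spin(\sigma^{-1}, \pp)$ up to an explicit local factor at $2$ that depends on $p \bmod 4$. The central analytic claim is that $\vec s(\pp)$ is equidistributed over $V_\pm$ as $p$ ranges over $S_\pm$; it is proved by bounding prime-twisted character sums whose characters are non-trivial $\FF_2$-linear combinations of spins. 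Because $S = G \setminus \{1\}$ contains every inverse pair $\{\sigma, \sigma^{-1}\}$, the asymmetric-$S$ techniques of \cite{FIMR, JDS} do not apply directly: one must first fix the reciprocity-determined coordinates of $\vec s(\pp)$ via the Chebotarev Density Theorem, and only then estimate the remaining oscillating spin products with sums of type I and type II. A parallel argument is required to reestablish these bilinear bounds for the generalized spin attached to non-principal $\pp$. This occupies Section~\ref{sec:dFR} and is where Conjecture~$C_{\eta}$ with $\eta = 2/(n(n-1))$ enters.

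\emph{Counting.} Given equidistribution, $d(F_\pm \mid S_\pm)$ equals the proportion of $V_\pm$ on which $\vec s = \vec 1$. Since $G \cong \ZZ/n$, the $\FF_2[G]$-module structure of $V_\pm$ is governed by the factorization of $x^n - 1$ over $\FF_2$: each cyclotomic factor $\Phi_k$ splits into $\phi(k)/d_k$ irreducibles of degree $d_k$, so $V_\pm$ decomposes along divisors $k \mid n$. The reciprocity involution $\sigma \mapsto \sigma^{-1}$ acts on each isotypic component, and its orbit/fixed-point count, controlled by the parity of $d_k$, produces the factor $(2^{d_k/2}+1)^{\phi(k)/d_k}$ in the $d_k$-even blocks and $(2^{d_k}-1)^{\phi(k)/(2d_k)}$ in the $d_k$-odd blocks (where inverse-paired irreducible summands are grouped together); altogether these combine to $|V_\pm| = 2^{3(n-1)/2}$, explaining the denominator. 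The discrepancy between $s_+$ and $s_-$---in particular the additive $1+$ in $s_+$---arises from the sign condition $p \equiv \pm 1 \pmod 4$, which dictates whether the reciprocity-fixed part of the spin is forced to be trivial or free, and specialization to $n$ prime recovers the stated closed form.
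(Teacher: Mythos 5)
Your overall plan is the right one and matches the paper's: reduce membership in $F_\pm$ to a vanishing condition on all non-trivial spins (this is Corollary~\ref{cor:SpinRes}), observe that the products $\spin(\pp,\sigma)\spin(\pp,\sigma^{-1})$ are determined by the Hilbert symbol at~$2$ and hence by the congruence class $\mathbf{r}_4(\pp)\in\MK$ (Proposition~\ref{prop:spinrelation}), handle that ``rigid'' part by Chebotarev (Theorem~\ref{thm:pmdensityformulas}), handle the remaining $(n-1)/2$ genuinely oscillating spins by sums of type I and type II adapted to non-principal ideals and a congruence constraint modulo $4$ (Theorem~\ref{thm:JoinSpinCor}), and perform a combinatorial count over $\FF_2[x]/(x^n-1)$ (Proposition~\ref{prop:Starlight}). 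The final density is then the product $d(F_\pm|R_\pm)\,d(R_\pm|S_\pm)$.

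However, the counting paragraph contains a genuine confusion between the numerator and denominator, and as written the arithmetic does not close. The factors $(2^{d_k/2}+1)^{\phi(k)/d_k}$ (for $d_k$ even) and $(2^{d_k}-1)^{\phi(k)/(2d_k)}$ (for $d_k$ odd) are \emph{not} powers of $2$ and cannot ``combine to $|V_\pm|=2^{3(n-1)/2}$''; they belong to the \emph{numerator} $s_-=\#\ker(\star_-)$, which counts classes $[\alpha]\in\MK^-$ satisfying $(\alpha,\alpha^\sigma)_2=1$ for all $\sigma\neq 1$. The denominator $2^{3(n-1)/2}$ has nothing to do with the $x^n-1$ factorization: it is the product $2^{n-1}\cdot 2^{(n-1)/2}$, with $2^{n-1}=\#\MK^{\pm}$ coming from Chebotarev-equidistribution of $\mathbf{r}_4(\pp)$ over the fixed-sign half of $\MK$, and $2^{(n-1)/2}$ coming from the equidistribution of the $(n-1)/2$ free spins $\spin(\pp,\tau^k)$, $1\le k\le(n-1)/2$, once the reciprocity-determined products are frozen. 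Your description of a set $V_\pm$ of spin vectors over which $\vec s(\pp)$ is equidistributed, with the density being the proportion of $V_\pm$ where $\vec s=\vec 1$, does not match this structure: $\vec s(\pp)=\vec 1$ requires both that $\mathbf{r}_4(\pp)$ lie in $\ker(\star_\pm)$ (a $\Pr=\#\ker(\star_\pm)/2^{n-1}$ event) and that the free spins all equal $+1$ (a conditional $\Pr=2^{-(n-1)/2}$ event); the density is the product, not a proportion of a single set. Relatedly, the ``$1+$'' in $s_+$ does not arise from a sign dichotomy forcing part of the spin to be ``trivial or free''; it is simply the contribution of the identity class $[1]\in\MK^+$, which trivially satisfies the Hilbert symbol condition and has norm $\equiv 1\bmod 4$. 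Finally, your claim that the $G$-action on primes above $p$ induces a ``cyclic permutation symmetry on entries'' of $\vec s(\pp)$ is incorrect for abelian $G$: one checks that $\spin(\pp^\tau,\sigma)=\spin(\pp,\sigma)$, so the spin vector is independent of the choice of $\pp\mid p$ — the relevant constraint is only the reciprocity relation between $\spin(\pp,\sigma)$ and $\spin(\pp,\sigma^{-1})$.
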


The density $d(F|S)$ is determined by the product of densities $d(F|R)$ and $d(R|S)$ %for $F\subseteq R\subseteq S$ 
where $R$ is the set of primes satisfying a certain Hilbert symbol condition. Toward computing the density $d(R|S)$, the terms $s_\pm$ arise from counting the number of solutions to this Hilbert symbol condition over $(\OO/4)^\times/((\OO/4)^\times)^2$.

%The table below gives the first few examples of these densities for such $K$ of small degree.
\begin{table}[h!]
\begingroup
\renewcommand{\arraystretch}{1.2} % Default value: 1
\label{tab:CompDensities}
\caption{Densities from Theorem \ref{thm:DRP}, computed for $K$ of degree $n$ satisfying the necessary hypotheses. 
\switchAprint{tab:CompDensities}}
\[
\begin{array}{c  | c  | c  | c }
n   & d(F_+|S_+) &  d(F_-|S_-) & d(F|S) \\ \hline
3    &  1/8  &    3/8  &    1/4   \\
5     &  1/64  &   5/64  &   3/64    \\
7     &  15/512  &    7/512  &    11/512    \\
9    &  1/4096  &   27/4096   &  7/2048    \\
11  &  1/32768  &    33/32768  &    17/32768    \\
13    &  1/262144  &   65/262144  &  33/262144    \\
15   & 1/2097152 & 375/2097152 & 47/262144 \\
\end{array}
\] 

\vspace{.2cm}

\endgroup
\end{table}

In the cubic case, we have the following unconditional theorem.

\begin{restatable}{thm}{DRPcubic}\label{thm:DRPcubic}\switchAprint{thm:DRPcubic}
Let $K/\QQ$ be a cubic cyclic number field and odd class number in which $2$ is inert. Then
\[
d(F|S)=\frac{1}{4},
\]
\[
d(F_+|S_+) = \frac{1}{8}, \quad \text{and} \quad
d(F_-|S_-) = \frac{3}{8}.
\]
\end{restatable}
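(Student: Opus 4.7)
The plan is to recover Theorem~\ref{thm:DRPcubic} as a direct specialization of Theorem~\ref{thm:DRP} to $n=3$, exploiting the fact that the required instance of Conjecture~$C_\eta$ becomes unconditional in this range. Setting $n=3$ gives $\eta = 2/(n(n-1)) = 1/3$, and since $1/3 > 1/4$, the hypothesis of Theorem~\ref{thm:DRP} is supplied unconditionally by Burgess's classical character sum estimate \cite{Burgess}, as already noted in the paragraph following the statement of Conjecture~$C_\eta$. Thus once Theorem~\ref{thm:DRP} is established, no further analytic input is needed for the cubic case.

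It then remains to evaluate the formulas of Theorem~\ref{thm:DRP}. Since $n=3$ is prime, I would use the ``prime case'' expressions with $d = d_3 = \ord_{(\ZZ/3)^\times}(2)$. Because $2 \equiv -1 \pmod{3}$, one has $d=2$, which is even, so the formulas yield $(s_+, s_-) = \bigl(1,\, (2^{d/2}+1)^{(n-1)/d}\bigr) = (1, 3)$. Substituting,
\[
d(F_+|S_+) = \frac{s_+}{2^{3(n-1)/2}} = \frac{1}{8}, \quad d(F_-|S_-) = \frac{s_-}{2^{3(n-1)/2}} = \frac{3}{8}, \quad d(F|S) = \frac{s_+ + s_-}{2^{(3n-1)/2}} = \frac{1}{4},
\]
in agreement with the statement and with the $n=3$ row of Table~\ref{tab:CompDensities}.

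Since Theorem~\ref{thm:DRPcubic} is strictly a corollary of Theorem~\ref{thm:DRP}, there is no genuine obstacle beyond what must be overcome in that theorem's proof. The one thing I would double-check is that the proof of Theorem~\ref{thm:DRP} invokes Conjecture~$C_\eta$ only through character sum bounds that Burgess supplies for $\eta = 1/3$, so that no hidden dependence forces us back into conjectural territory when $n=3$; conditions (C1)--(C4) are imposed on $K$ in both theorems, so no additional hypotheses need to be verified.
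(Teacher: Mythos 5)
Your reduction to Theorem~\ref{thm:DRP} and the numerical evaluation are both correct: $\eta = 1/3 > 1/4$ puts us in the Burgess range, and with $d_3 = 2$ (even) the prime-degree formulas give $(s_+, s_-) = (1, 3)$, hence $d(F_+|S_+)=1/8$, $d(F_-|S_-)=3/8$, $d(F|S)=1/4$. However, your final remark---``conditions (C1)--(C4) are imposed on $K$ in both theorems, so no additional hypotheses need to be verified''---is wrong, and it hides the one genuinely nontrivial step of this corollary. Theorem~\ref{thm:DRPcubic} assumes only that the (ordinary, wide) class number $h = |\CL|$ is odd, whereas condition (C3) in Theorem~\ref{thm:DRP} requires the \emph{narrow} class number $h^+ = |\CL^+|$ to be odd. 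For a totally real field these are not the same in general: $h^+/h$ is a power of $2$ that can be as large as $2^{r_1-1}$, so $h$ odd does not by itself force $h^+$ odd.

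The paper bridges this gap by invoking a theorem of Armitage and Fr\"ohlich (Theorem V in \cite{AF}): for a cyclic cubic field with odd class number, every signature is realized by a unit, which by Lemma~\ref{RCFh} is equivalent to $\OO^\times_+ = (\OO^\times)^2$ and to $\CL^+ = \CL$, whence $h^+ = h$ is odd and (C3) holds. Without this, your appeal to Theorem~\ref{thm:DRP} is unjustified. Your proof should be amended to include this argument (or at least to observe that (C3) must be deduced rather than assumed); the rest of your reasoning can then stand as written.
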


For our main results, we have assumed that $K$ satisfies properties (C1)-(C4). To start, we need properties (P1) and (C3) to define the extensions $K(p)/K$ for primes $p$ that split completely in $K/\QQ$. Coincidentally, as mentioned above, property (C3) also allows us to study the splitting behavior of all (not necessarily principal) prime ideals. Property (C2) ensures that $\Gal(K/\QQ)$ contains no involutions. While methods to deal with involutions do exist (see \cite[Section 12, p.\ 745]{FIMR}), incorporating them into our arguments is non-trivial and may pose interesting new challenges in our analytic arguments. Properties (C1) and (C4) simplify our combinatorial arguments and allow us to give explicit density formulas. Removing the assumptions of properties (C1) and (C4) would pose new combinatorial challenges.    

To end this section, we give some examples of number fields satisfying (C1)-(C4) so as to convince the reader that our theorems are not vacuous. First, many such fields can be found within the parametric families given by Friedlander et al.\ in \cite[p.\ 712]{FIMR} and originally due to Shanks \cite{Shanks} and Lehmer \cite{Lehmer}, namely
$$
\{\QQ(\alpha_m):\ m\in\ZZ\}\quad \text{and}\quad \{\QQ(\beta_m):\ m\in\ZZ\}
$$
where $\alpha_m$ and $\beta_m$ are roots of the polynomials
$$
f_m(x) = x^3 + m x^2 + (m - 3) x - 1.
$$
and
\begin{multline*}
    g_m(x) = x^5+m^2x^4-2(m^3+3m^2+5m+5)x^3
    \\+(m^4+5m^3+11m^2+15m+5)x^2+(m^3+4m^2+10m+10)x+1,
\end{multline*}
respectively. While $\QQ(\alpha_m)$ and $\QQ(\beta_m)$ always satisfy properties (P1), (C1), and (C2), for small $m$ one can check for properties (C3) and (C4) using Sage or another similar mathematical software package. For instance, if $\beta_7$ is any root of 
$$
g_7(x) = x^5 + 49x^4 - 1060x^3 + 4765x^2 + 619x + 1,
$$
then $\QQ(\beta_7)$ is a totally real cyclic degree-$5$ number field of class number $1451$ where $2$ stays inert. We also note that one can use the law of cubic reciprocity to show that the fields $\QQ(\alpha_m)$ always satisfy property (C4).

More generally, we can look for special subfields of cyclotomic fields. Let $m$ be a prime number and $\zeta_{m}$ a primitive $m$-th root of unity, so that $\QQ(\zeta_{m})/\QQ$ is a cyclic extension of degree $\varphi(m)$, and suppose that $n$ is an odd integer such that $\varphi(m)\equiv 0\bmod 2n$. For instance, we can take $n$ to be a Sophie Germain prime and then take $m = 2n+1$ to also be a prime. Suppose also that $2$ is inert in $\QQ(\zeta_{m})$, i.e., that $2$ is a primitive root modulo $m$. We then define $K$ to be the unique subfield of $\QQ(\zeta_{m}+\zeta_{m}^{-1})$ of degree $n$ over $\QQ$; $K$ readily satisfies properties (C1), (C2), (C4), while for small $n$ the property that $\CL^+ = \CL$ and property (C3) can be checked using Sage. For instance, the unique degree-$5$ subfield of $\QQ(\zeta_{191})$ has class number $11$; it is isomorphic to $\QQ(\beta_2)$ with $\beta_2$ a root of the polynomial $g_2$ as above.

\section{Two Families of Number Fields}\label{sec:treasuremap}

We say a modulus $\mmm$ is \textit{narrow} whenever it is divisible by all real infinite places. We say a modulus is \textit{wide} whenever it is not divisible by any infinite place. We say a ray class group or ray class field is narrow or wide whenever its conductor is narrow or wide respectively. 

For $\mmm$ an ideal of $\OO$, let $\CL^+_\mmm$ denote the narrow ray class group of conductor $\mmm$. That is, $\CL^+_\mmm$ is the ray class group with conductor divisible by all real infinite places with finite part $\mmm$.

%For a modulus $\mmm$ of $K$, let $\mmm'$ be the minimal narrow modulus divisible by $\mmm$. That is,
%\[
%\mmm' \defeq \mmm \times \prod_{\text{real }\sigma \nmid \mmm} \sigma.
%\]
%Let $R^+_{\mmm}$ denote the narrow ray class field over $K$ of conductor $\mmm'$ and let $C^+_{\mmm}$ denote the corresponding narrow ray class group. 

The following lemma leads to several equivalent formulations of property (P1).
\newpage

\begin{lem}\label{RCFh} \switchAprint{RCFh}
\begin{enumerate}[(1)] $K$ is any number field.
    \item The following are equivalent.
        \begin{enumerate}[(i)]
            \item $\CL^+=\CL$.
            \item Every principal ideal has a totally positive generator.
            \item All signatures are represented by units.
        \end{enumerate}
    \item If $h^+$ is odd, then $\CL^+=\CL$.
    \item $K$ is totally real with $\CL^+=\CL$ if and only if $\OO^{\times}_{+} = \left(\OO^{\times}\right)^2$.
\end{enumerate}
\end{lem}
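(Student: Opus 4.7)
The plan is to derive all three parts from a single device: the signature map $\mathrm{sgn}\colon K^\times \to \{\pm 1\}^{r_1}$, where $r_1$ is the number of real embeddings of $K$, sending an element to the tuple of its signs. This map is surjective with kernel $K^\times_+$ by approximation, and its restriction to $\OO^\times$ has kernel $\OO^\times_+$ and some image $V \leq \{\pm 1\}^{r_1}$.

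For part~(1), I would first identify the kernel of the canonical surjection $\CL^+ \surj \CL$. Since $P_K \cong K^\times/\OO^\times$ and the subgroup $P_K^+ \leq P_K$ of principal ideals with a totally positive generator corresponds to $K^\times_+ \OO^\times/\OO^\times$, the kernel is
\[
P_K/P_K^+ \;\cong\; K^\times/(K^\times_+ \OO^\times) \;\cong\; \{\pm 1\}^{r_1}/V.
\]
The three conditions (i), (ii), (iii) then all reduce to the single assertion $V = \{\pm 1\}^{r_1}$, phrased respectively as the vanishing of $\CL^+/\CL$, the vanishing of $P_K/P_K^+$, and the surjectivity of $\mathrm{sgn}$ on $\OO^\times$.

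For part~(2), note that $\{\pm 1\}^{r_1}/V$ is an elementary abelian $2$-group, so $[\CL^+:\CL]$ is a power of $2$ dividing $h^+$; if $h^+$ is odd this power must be $1$, giving $\CL^+=\CL$.

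For part~(3), I would combine the short exact sequence
\[
1 \to \OO^\times_+ \to \OO^\times \xrightarrow{\mathrm{sgn}} V \to 1
\]
with Dirichlet's unit theorem. Since $\{\pm 1\}\subseteq \mu_K$, the cyclic factor contributes a $\ZZ/2$, yielding $|\OO^\times/(\OO^\times)^2| = 2^{r_1+r_2}$. The inclusion $(\OO^\times)^2 \subseteq \OO^\times_+$ gives
\[
|V|\cdot[\OO^\times_+:(\OO^\times)^2] \;=\; [\OO^\times:(\OO^\times)^2] \;=\; 2^{r_1+r_2}.
\]
Combined with $|V|\leq 2^{r_1}$, the equality $\OO^\times_+=(\OO^\times)^2$ forces both $r_2=0$ and $|V|=2^{r_1}$, i.e., $K$ is totally real and, by part~(1), $\CL^+=\CL$; the converse direction is obtained by reversing this count. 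The proof is essentially bookkeeping, with the only delicate point being the correct accounting of the $2$-torsion contribution of $\mu_K$ in the computation of $|\OO^\times/(\OO^\times)^2|$; no deeper obstacle is expected.
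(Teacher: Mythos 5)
Your proof is correct and follows essentially the same route as the paper: both reduce everything to whether the signature image $V = \mathrm{sgn}(\OO^\times)$ fills $\{\pm 1\}^{r_1}$, and both count $[\OO^\times : (\OO^\times)^2] = 2^{r_1+r_2}$ via Dirichlet's unit theorem in part (3). The only stylistic difference is that where the paper twice cites the ray class group exact sequence and isomorphism from Milne (Theorem V.1.7) to get that $\CL^+ = \CL$ is equivalent to $\OO^\times/\OO^\times_+ \cong (\ZZ/2)^{r_1}$, you derive the identification $\ker(\CL^+\to\CL)\cong\{\pm1\}^{r_1}/V$ directly from weak approximation and the definition of $P_K^+$, which makes the argument self-contained but does not change its substance.
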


Here, if $K$ is not necessarily totally real, an element is said to be totally positive when it is positive in all real embeddings, and the signature of an element is determined by the signs of the element in each real embedding. %(For example, if $K$ is totally complex, then every element is vacuously totally positive.)

\begin{proof} Let $K$ be an arbitrary number field with $r_1$ real embeddings and $r_2$ pairs of complex embeddings.
That (i) and (ii) are equivalent follows from the definitions of the narrow and wide Hilbert class fields. 
By the exact sequence and canonical isomorphism in \cite[Theorem V.1.7]{MilneCFT} applied to the narrow modulus with trivial finite part, condition (i) is true exactly when $\OO^\times/\OO^\times_+ \ism (\ZZ/2)^{r_1}$. Noting that there are $r_1$ signatures and the signatures of two units are equal exactly when these units are equivalent modulo the totally positive units, (i) is equivalent to (iii).  
Since $[\CL^+:\CL]$ is always a power of $2$, if $h^+$ is odd, then property (i) holds.

As noted above, condition (i) is true exactly when $\OO^\times/\OO^\times_+ \ism (\ZZ/2)^{r_1}$. By Dirichlet's unit theorem, $\OO^\times/(\OO^\times)^2\ism \left(\ZZ/2\right)^{r_1+r_2}$.
Therefore if (i) holds and in addition $K$ is totally real, then $r_2=0$ and $\OO^\times/\OO^\times_+ \ism \OO^\times/(\OO^\times)^2$. Containment of $(\OO^\times)^2$ in $\OO^\times_+$ gives equality.
Conversely, if we assume $\OO^{\times}_{+} = \left(\OO^{\times}\right)^2$, then $\OO^\times/\OO^\times_+  \ism \left(\ZZ/2\right)^{r_1+r_2}$ by Dirichlet's unit theorem. By the exact sequence and canonical isomorphism in \cite[Theorem V.1.7]{MilneCFT}, there is an injection from
$\OO^\times/\OO^\times_+$ into a group isomorphic to $\left(\ZZ/2\right)^{r_1}$. Therefore $r_2=0$ and
$\OO^\times/\OO^\times_+ \ism \left(\ZZ/2\right)^{r_1}$. That is, $K$ is totally real and condition (i) holds.
\end{proof}

%{\color{blue}{  
%$K$ is any number field.
%\begin{enumerate}[(1)]
%    \item When $h^+$ is odd, the following equivalent conditions hold.
%        \begin{enumerate}[(i)]
%        \item $\CL^+=\CL$.
%        \item Every principal ideal has a totally positive generator.
%        \item All signatures are represented by units.
%        \end{enumerate}
%    \item $\OO^{\times}_{+} = \left(\OO^{\times}\right)^2$ exactly when $K$ is totally real and the equivalent conditions in part (1) hold.
%\end{enumerate}
%}}

  \begin{lem}\label{Uquadsub}\switchAprint{Uquadsub} 
   Let $K$ be a totally real number field with odd narrow class number $h^+$. Let $\pp$ be an odd prime of $K$. Then the {narrow} ray class field over $K$ of conductor $\pp$ has a unique subextension that is quadratic over $K$. 
   % This lemma assumed K/Q to be Galois in version from 20200520 (Compositio submission and Arxiv post). This assumption is not necessary and has been removed.
    \end{lem}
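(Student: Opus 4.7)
The plan is to translate the claim via class field theory into the statement $|\CL^+_\pp/(\CL^+_\pp)^2| = 2$ and then compute this $2$-torsion quotient using the hypotheses. By class field theory, quadratic subextensions over $K$ inside the narrow ray class field of conductor $\pp$ correspond to index-$2$ subgroups of $\CL^+_\pp$, equivalently to nontrivial characters of $\CL^+_\pp/(\CL^+_\pp)^2$. So a unique such subextension exists precisely when this quotient has order $2$.

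The starting point is the standard exact sequence
\[
\OO^\times \xrightarrow{\phi} G \to \CL^+_\pp \to \CL^+ \to 0,
\]
where $G = (\OO/\pp)^\times \times \{\pm 1\}^{r_1}$ (with $r_1 = [K:\QQ]$, since $K$ is totally real) and $\phi$ sends a unit $u$ to its reduction modulo $\pp$ together with its signature vector. Writing $H$ for the cokernel of $\phi$, this presents $\CL^+_\pp$ as an extension of the odd-order group $\CL^+$ by $H$. Since $\CL^+$ has trivial Sylow $2$-subgroup, the natural map $H/2H \to \CL^+_\pp/2\CL^+_\pp$ is an isomorphism, and the task reduces to showing $|G/(\phi(\OO^\times)\cdot G^2)| = 2$.

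Both sides are now computed modulo squares. Since $\pp$ is odd, $(\OO/\pp)^\times$ is cyclic of even order, so $(\OO/\pp)^\times/((\OO/\pp)^\times)^2 \ism \ZZ/2$; hence $G/G^2 \ism (\ZZ/2)^{r_1+1}$. For the image of $\phi$: Dirichlet's unit theorem in the totally real case gives $\OO^\times/(\OO^\times)^2 \ism (\ZZ/2)^{r_1}$, and by Lemma~\ref{RCFh} (applicable since $h^+$ odd forces $\CL^+ = \CL$) the signature map induces an isomorphism $\OO^\times/(\OO^\times)^2 \xrightarrow{\sim} \{\pm 1\}^{r_1}$. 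Composing $\OO^\times/(\OO^\times)^2 \to G/G^2$ with projection to the sign factor is therefore already an isomorphism, which forces the first map to be injective with image of size $2^{r_1}$. Dividing gives $|G/(\phi(\OO^\times)\cdot G^2)| = 2^{r_1+1}/2^{r_1} = 2$, as needed.

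The argument is essentially linear algebra over $\FF_2$ once the exact sequence is set up. The only step with genuine arithmetic content is the surjectivity of the signature map on units, which is precisely the part of Lemma~\ref{RCFh} that requires odd narrow class number; without it one could only conclude $|G/(\phi(\OO^\times)\cdot G^2)| \geq 2$, leaving open the possibility of several distinct quadratic subextensions.
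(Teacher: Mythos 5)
Your proof is correct, and it takes a genuinely different route from the paper's. Both arguments start from the same exact sequence from class field theory (what you write as $\OO^\times \to G \to \CL^+_\pp \to \CL \to 0$ with $G = (\OO/\pp)^\times\times\{\pm1\}^{r_1}$, citing \cite[V.1.7]{MilneCFT}), but they then diverge. The paper splits the claim into two separate statements: first it uses the ray class number formula derived from this sequence, together with the reduction-mod-$\pp$ map on $(\OO^\times)^2/\OO^\times_{\pp,1}$, to conclude that $h^+_\pp$ is even (so a quadratic subextension exists); then it invokes a ramification argument --- $\pp$ is tamely ramified in the maximal $2$-extension $L$ of $R^+_\pp$, so the inertia group at $\pp$ is cyclic, and since the narrow Hilbert class field has odd degree over $K$ the inertia group must be all of $\Gal(L/K)$ --- to conclude that the $2$-Sylow subgroup of $\CL^+_\pp$ is cyclic (so the quadratic subextension is unique). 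You instead reduce the whole exact sequence modulo squares and compute $|\CL^+_\pp/(\CL^+_\pp)^2|=2$ directly, which delivers existence and uniqueness in one step. Your computation hinges on the surjectivity of the signature map $\OO^\times\to\{\pm1\}^{r_1}$ with kernel $(\OO^\times)^2$, which is exactly the consequence of $h^+$ being odd via Lemma~\ref{RCFh}, paralleling the paper's use of $\OO^\times_+=(\OO^\times)^2$. What the paper's argument buys is a conceptual explanation for the cyclicity (tame inertia), useful to keep in mind; what yours buys is brevity and a single unified $\FF_2$-linear-algebra computation, and in fact it would also identify the quadratic subextension in concrete terms if one followed the generator through the correspondence.
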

     \begin{proof}
 By \cite[V.1.7]{MilneCFT} applied to the modulus with finite part $\pp$ that is divisible by all real places, for any number field $K$, 
 \[
 h_\pp^+ = \frac{2^{n}(\Norm(\pp)-1)h}{(\OO^\times:\OO^\times_{\pp,1})}.
 \]
 Since $h_+$ is odd and $K$ is totally real, by Lemma \ref{RCFh}\switchAprint{RCFh}, $\OO^\times_+=(\OO^\times)^2$. Therefore
 \begin{align*}
     (\OO^\times:\OO^\times_{\pp,1}) &= (\OO^\times:\OO^\times_+)(\OO^\times_+:\OO^\times_{\pp,1}) \\
     &= (\OO^\times:(\OO^\times)^2)((\OO^\times)^2:\OO^\times_{\pp,1})\\
     &= 2^{n}((\OO^\times)^2:\OO^\times_{\pp,1}).
 \end{align*}
Therefore
  \[
 h_\pp^+ = \frac{(\Norm(\pp)-1)h}{((\OO^\times)^2:\OO^\times_{\pp,1})}.
 \]
 Also by \cite[V.1.7]{MilneCFT}, there is an injection
 \[
 \OO^\times/\OO^\times_{\pp,1} \inj \left(\ZZ/2\right)^{n} \times \left(\OO/\pp\right)^\times.
 \]
 The image of $(\OO^\times)^2/\OO^\times_{\pp,1}$ lies in $\left(\left(\OO/\pp\right)^\times\right)^2$, which has order $(\Norm(\pp)-1)/2$. Therefore $((\OO^\times)^2:\OO^\times_{\pp,1})$ divides $(\Norm(\pp)-1)/2$, so $h_\pp^+$ is even.
 
 Now we show the $2$-part of $\CL^+_\pp$ is cyclic. %\switchAprint{We use that $K$ is Galois, totally real with $U_T=U^2$ and that $K$ has odd class number $h$.}
    Let $L$ denote the maximal $2$-extension of the narrow ray class field over $K$ of conductor $\pp$.
    Let $E$ denote the inertia group for $\pp$ relative to the extension $L/K$ and let $L_E$ denote the fixed field of $E$. 
    Since $\pp$ is odd, $\pp$ is tamely ramified in $L/K$, so by  \cite[7.59]{MilneANT}, $E$ is cyclic.
    
    By Lemma \ref{RCFh}\switchAprint{RCFh}, since $\OO^\times_+=(\OO^\times)^2$ and $h$ is odd, $\CL^+$ also has odd degree over $K$. Therefore there is no non-trivial even extension of $K$ in which all finite primes are unramified.
    All finite primes of $K$ are unramified in $L_E$ and $[L_E:K]$ divides $[L:K]$. Then since $[L:K]$ is a power of 2, $[L_E:K]=1$ so $E=\Gal(L/\QQ)$.
 \end{proof}
    
We may now define the multi-quadratic extension $K(p)/K$ as in Section~\ref{sec:intro}. In addition, we define another family of number fields parameterized by prime numbers $p$ for which our results also hold.
Consider a totally real number field $K$ with odd narrow class number $h^+$. Furthermore, we now impose the condition that $K/\QQ$ is a Galois extension. Equivalently, we are assuming conditions (P1) and (C3).

In Definition \ref{defn:Kp}\switchAprint{defn:Kp}, we apply Lemma \ref{Uquadsub}\switchAprint{Uquadsub} to ensure the existence of a unique quadratic subextension of the narrow ray class field over $K$ of conductor $\pp$. 
In Definition \ref{defn:Kpplus}\switchAprint{defn:Kpplus} we will use the fact that for such $K$, a principal ideal always has a totally positive generator; see Lemma~\ref{RCFh}.
   
\begin{defn}\label{defn:Kp}\switchAprint{defn:Kp}
Given an odd rational prime $p$ that splits completely in $K/\QQ$ and a prime ideal $\pp\subset\OO$ lying above $p$, define $K(\pp)$ to be the unique quadratic subextension of the narrow ray class field over $K$ of conductor $\pp$. 

Define $K(p)$ to be the compositum of the fields $K(\pp^{\sigma})$ as $\sigma$ ranges over $\Gal(K/\QQ)$.
\end{defn}

\begin{defn}\label{defn:Kpplus}\switchAprint{defn:Kpplus}
%Let $K$ be a totally real number field that is Galois over $\QQ$ with odd narrow class number.
Given an odd rational prime $p$ that splits completely in $K/\QQ$, a prime ideal $\pp\subset\OO$ lying above $p$, and a totally positive generator $\alpha$ of the principal ideal $\pp^h$, we define
\[ K_+(\pp)\defeq K(\sqrt{\alpha}).\]
Define $K_+(p)$ to be the compositum of the number fields  $K_+(\pp^{\sigma})$ as $\sigma$ ranges over $\Gal(K/\QQ)$.
\end{defn}

Since $K$ is totally real and $h_+$ is odd, Lemma \ref{RCFh}\switchAprint{RCFh} implies that $\OO^{\times}_+ = (\OO^{\times})^2$, so $K_+(\pp)$ does not depend on the choice of totally positive generator $\alpha$. 

We note that while each of the fields $K(\pp^{\sigma})$ need not be Galois over $\QQ$, their compositum $K(p)$ certainly is. Similarly, $K_+(p)/\QQ$ is Galois, and each of the extensions $K_+(\pp^{\sigma})/\QQ$ need not be. 

As both extensions $K(p)/\QQ$ and $K_+(p)/\QQ$ are Galois, the ramification indices and residue field degrees of $p$ in each extension are well-defined. 
We denote the ramification index and residue field degree of $p$ in $K(p)/\QQ$ by $e_{K(p)/\QQ}(p)$ and $f_{K(p)/\QQ}(p)$, respectively. Similarly, we denote the ramification index and residue field degree of $p$ in $K_+(p)/\QQ$ by $e_{K_+(p)/\QQ}(p)$ and $f_{K_+(p)/\QQ}(p)$, respectively.

Since $p$ is assumed to split completely in $K/\QQ$, there are $n$ distinct primes in $K$ lying above $p$, and they are of the form $\pp^{\sigma}$, where $\pp$ is one such prime and $\sigma$ ranges over $\Gal(K/\QQ)$. 

By Lemma \ref{Uquadsub}\switchAprint{Uquadsub}, $K(\pp^\sigma)/K$ is a quadratic extension, and since $\alpha$ generates a prime ideal, $K_+(\pp^\sigma)/K$ is also a quadratic extension. 
Since $K(\pp^\sigma)$ is a subfield of the narrow ray class field over $K$ of conductor $\pp^\sigma$, the extension $K(\pp^\sigma)/K$ is unramified at $\pp^\tau$ for all $\tau\neq\sigma$ in $\Gal(K/\QQ)$. Since $h^+$ is odd, $K(\pp^\sigma)/K$ is ramified at $\pp^\sigma$. Therefore $e_{K(p)/\QQ}(p)=2$ and $[K(p):\QQ]=n2^n$.

Since $\pp^\sigma$ is an odd prime, $\pp^\sigma$ divides the discriminant of $K_+(\pp^\sigma)/K$ and so this extension is ramified at $\pp^\sigma$. 
% Disc. of minimal polynomial is $4\alpha$. Disc(\OO)= [\OO:\ZZ[\alpha]]^2Disc(f). $\pp^\sigma$ is odd. 
Since $\pp^\tau$ does not divide the discriminant for any $\tau\neq\sigma$ in $\Gal(K/\QQ)$, $K_+(\pp^\sigma)/K$ is unramified at $\pp^\tau$ for all such $\tau$. Therefore $e_{K_+(p)/\QQ}(p)=2$ and $[K_+(p):\QQ]=n2^n$.

The residue field $\ZZ/p$ is cyclic and injects into $\mathcal{O}_{K(p)}/\mathfrak{P}$ where $\mathfrak{P}$ is a prime of $K(p)$ above $p$. Therefore $f_{K(p)/\QQ}(p) \mid 2$ because there are no cyclic subextensions of $K(p)/K$ of degree greater than $2$, and $p$ is assumed to split completely in $K/\QQ$. Similarly, $f_{K_+(p)/\QQ}(p) \mid 2$. We summarise in the following Lemma.

\begin{lem}\label{lem:possiblesplittings}\switchAprint{lem:possiblesplittings} 
Let $K$ be a totally real number field that is Galois over $\QQ$ with odd narrow class number and let $p$ be a prime that splits completely in $K/\QQ$. For $L=K(p)$ and for $L=K_+(p)$,
\begin{enumerate}
    \item $L/\QQ$ is a Galois extension of degree $n2^n$.
    \item $e_{L/\QQ}(p) = 2$. 
    \item $f_{L/\QQ}(p)\mid 2$.
\end{enumerate}
\end{lem}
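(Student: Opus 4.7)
The three conclusions should be established in parallel for $L = K(p)$ and $L = K_+(p)$, organized around the local picture at each of the $n$ primes $\pp^{\sigma}$ of $K$ above $p$.

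\emph{Galois property and degree.} I would first note that both compositum definitions are manifestly $\Gal(\overline{\QQ}/\QQ)$-stable: the indexing set $\{\pp^{\sigma} : \sigma \in \Gal(K/\QQ)\}$ is a full Galois orbit (by complete splitting), and the construction of $K(\pp^\sigma)$ and of $K_+(\pp^\sigma)$ is canonical in $\pp^\sigma$, so the compositum is invariant under the absolute Galois group. To pin down $[L:K]$, I would show that each factor $K(\pp^{\sigma})/K$ (respectively $K_+(\pp^{\sigma})/K$) is a quadratic extension ramified at $\pp^{\sigma}$ and unramified at every other finite prime. For $K(\pp^{\sigma})$ this is immediate from Lemma~\ref{Uquadsub} together with the fact that its conductor divides $\pp^{\sigma}\infty$; the odd narrow class number (C3) forces the quadratic subextension to be genuinely ramified at $\pp^{\sigma}$, else it would lie in the narrow Hilbert class field, which has odd degree. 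For $K_+(\pp^{\sigma}) = K(\sqrt{\alpha})$ with $\alpha \succ 0$ a generator of $\pp^{\sigma h}$, the prime $\pp^{\sigma}$ appears to odd power in $(\alpha)$ while every other finite prime appears to even power, so $\pp^\sigma$ is the unique ramified finite prime, and totality of positivity of $\alpha$ rules out ramification at real places.

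\emph{Linear disjointness and the degree $n \cdot 2^n$.} Because the ramification loci of the $n$ quadratic factors are pairwise disjoint, the factors are pairwise linearly disjoint over $K$: any nontrivial common subextension would be ramified only at the empty set of primes, hence trivial by (C3). Therefore $\Gal(L/K) \cong (\ZZ/2\ZZ)^n$, giving $[L:K] = 2^n$ and $[L:\QQ] = n \cdot 2^n$, proving (1).

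\emph{Ramification index and residue degree of $p$.} Since $p$ splits completely in $K/\QQ$, for any prime $\Pp$ of $L$ above $p$ with $\Pp \cap \ocal = \pp^{\sigma}$ we have $e_{L/\QQ}(\Pp \mid p) = e_{L/K}(\Pp \mid \pp^{\sigma})$ and likewise for the residue degree. From the previous step, exactly one of the $n$ quadratic factors contributes nontrivial inertia at $\pp^{\sigma}$, while the other $n-1$ are unramified there; so $e_{L/K}(\Pp \mid \pp^{\sigma}) = 2$, proving (2). For the residue degree, the decomposition group $D_{\Pp}$ of $\Pp$ in $\Gal(L/\QQ)$ is contained in $\Gal(L/K) \cong (\ZZ/2\ZZ)^n$ (again by complete splitting in $K$), and its cyclic quotient $\Gal(k(\Pp)/\FF_p)$ has order equal to a divisor of $2$, since every cyclic subgroup of $(\ZZ/2\ZZ)^n$ has order at most $2$. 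This gives (3).

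\emph{Main obstacle.} There is no serious technical obstacle; the only points that require real input rather than bookkeeping are the use of (C3) (via Lemma~\ref{Uquadsub}) to guarantee that the quadratic subextension $K(\pp^{\sigma})$ is truly ramified at $\pp^{\sigma}$, and the parallel verification for $K_+(\pp^{\sigma})$ that the Kummer-type construction does not introduce spurious ramification away from $\pp^{\sigma}$. Once these are isolated, linear disjointness and the decomposition-group computation are automatic.
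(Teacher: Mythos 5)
Your overall approach — quadratic factors parametrized by the $\pp^{\sigma}$, ramification/unramification at each $\pp^{\tau}$, and a decomposition-group argument for $f\mid 2$ — is the same as the paper's (the proof appears as running text in Section~2 just before the lemma statement). Your residue-degree argument is correct (you say ``cyclic subgroup'' where you mean ``cyclic quotient,'' but since every subquotient of $(\ZZ/2)^n$ is elementary abelian the conclusion stands). There are, however, two genuine issues in your degree argument.

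First, for $L=K_+(p)$ your claim that ``$\pp^{\sigma}$ is the unique ramified finite prime'' of $K_+(\pp^{\sigma})=K(\sqrt{\alpha^{\sigma}})$ is not justified and is generally false: the relative discriminant of $K(\sqrt{\alpha^{\sigma}})/K$ divides $4\pp^{\sigma}$, and since there is no control on $\alpha^{\sigma}$ modulo $4$ (indeed Lemma~\ref{equidistribution} says $\mathbf{r}_4(\pp^{\sigma})$ equidistributes over $\MK$), the prime above $2$ may well ramify. The paper establishes, and uses, only the weaker but sufficient fact that $\pp^{\sigma}$ is ramified while $\pp^{\tau}$ for $\tau\neq\sigma$ is not. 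This matters because your argument for linear disjointness — ``any nontrivial common subextension would be ramified only at the empty set of primes'' — relies on the ramification loci being disjoint, which fails for $K_+$ at the prime above~$2$.

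Second, even granting disjoint ramification loci, \emph{pairwise} linear disjointness of $n$ quadratic extensions does not give $[L:K]=2^n$ (compare $\QQ(\sqrt{2})$, $\QQ(\sqrt{3})$, $\QQ(\sqrt{6})$, which are pairwise linearly disjoint but whose compositum has degree $4$). The correct version, which also repairs the first issue, is: for each $i$, the factor $K_+(\pp^{\sigma_i})$ (resp.\ $K(\pp^{\sigma_i})$) is ramified at $\pp^{\sigma_i}$, whereas the compositum of the remaining $n-1$ factors is unramified at $\pp^{\sigma_i}$; hence the $i$-th factor is not contained in the compositum of the others, and inductively $\Gal(L/K)\cong(\ZZ/2)^n$. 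This argument needs only the one-versus-rest ramification dichotomy at each $\pp^{\sigma_i}$, so any extra ramification at $2$ is harmless. All the ingredients for this are already in your write-up; only the packaging needs to change.
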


We will see in Corollary~\ref{cor:SpinRes}\switchAprint{cor:SpinRes} that for a fixed odd rational prime $p$ splitting completely in $K/\QQ$, the residue field degrees of $p$ in $K(p)/\QQ$ and in $K_+(p)/\QQ$ are equal. %and that the possible factorizations of $p$ in $K(p)/\QQ$ are the same as in $K_+(p)/\QQ$. 
Hence, to prove Theorem~\ref{thm:DRP}, we will prove the analogous results for the family of extensions $K_+(p)/\QQ$.

\section{The Spin of Prime Ideals}\label{sec:onespin}
 
Throughout this section, we will assume $K$ satisfies (P1) and (C3). By Lemma \ref{RCFh}\switchAprint{RCFh}, this is equivalent to assuming that $K$ is a totally real number field that is Galois over $\QQ$ with odd narrow class number, and these conditions imply that $\OO^{\times}_{+} =\left(\OO^{\times}\right)^2$. %by Lemma \ref{RCFh}\switchAprint{RCFh}. %and the class number $h$ is odd.
We give the following definition of \textit{spin}, which extends the definition of spin from \cite[(1.1)]{FIMR} in a natural way so that it applies to all odd ideals (not necessary principal).

\begin{defn}\label{defn:spin}\switchAprint{defn:spin} 
Let $\sigma \in \Gal(K/\QQ)$ be non-trivial. Given an odd ideal $\aaa$, we define the {spin} of $\aaa$ (with respect to $\sigma$) to be
\[
\text{spin}(\aaa,\sigma) = \left(  \frac{\alpha}{ \aaa^\sigma}   \right),
\]
where $\alpha$ is any totally positive generator of the principal ideal $\aaa^h$, and where $\left( \frac{\cdot}{\cdot} \right)$ denotes the quadratic residue symbol in $K$. 
\end{defn}

%%%%%% Christine left off HERE %%%%%%%%%%

The assumption $\OO^{\times}_{+} =\left(\OO^{\times}\right)^2$ is important for two reasons. First, Lemma~\ref{RCFh} ensures that the principal ideal $\aaa^h$ has a generator $\alpha$ that is totally positive. Second, any two totally positive generators of $\aaa^h$ differ by a square, so the value of the quadratic residue symbol defining the spin does not depend on the choice of totally positive generator $\alpha$.

If $\aaa$ is an odd principal ideal and $\alpha_0$ is a totally positive generator of $\aaa$, then $\alpha_0^h$ is a totally positive generator for $\aaa^h$. As $h$ is odd, we have
$$
\left(  \frac{\alpha_0}{ \aaa^\sigma}   \right) = \left(  \frac{\alpha_0^h}{ \aaa^\sigma}   \right),
$$
so our definition coincides with that of Friedlander et al.\ in \cite{FIMR} for odd principal ideals $\aaa$.

\subsection{Known Results}
The main result in \cite{FIMR} can be stated as follows.

\begin{thm}[{\cite[Theorem 1.1]{FIMR}}]\label{thm:FIMR}\switchAprint{thm:FIMR} Suppose $K$ is a number field satisfying properties (P1) and (C1). Suppose $n = [K:\QQ]\geq 3$. Assume Conjecture~$C_{\eta}$ holds for $\eta = 1/n$ with $\delta = \delta(\eta)>0$. Let $\sigma$ be a generator of the Galois group $\Gal(K/\QQ)$. Then for all real numbers $x>3$, we have
\[
 \sum_{\substack{\pp\text{ principal} \\ \text{prime ideal} \\ \Norm(\pp) \leq x}}   \spin(\pp,\sigma)   \ll x^{1-\theta+\epsilon}
\]
where $\theta=\theta(n)=\frac{\delta}{2n(12n+1)}$. Here the implied constant depends only on $\epsilon$ and $K$.
\end{thm}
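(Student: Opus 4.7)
The plan is to prove cancellation via a Vinogradov/Vaughan-type decomposition of the indicator of primes into linear (type I) and bilinear (type II) sums, and then handle each piece by short character-sum estimates, the key input being Conjecture~$C_\eta$ at $\eta = 1/n$. Under (P1) and (C3), every odd principal ideal $\pp$ has a totally positive generator $\alpha$ well-defined modulo $(\OO^\times)^2$, so by Lemma~\ref{RCFh} the value $\spin(\pp,\sigma) = \left(\alpha/\pp^\sigma\right)$ depends only on $\pp$. I would parameterize odd totally positive $\alpha \in \OO$ by their coordinates in a fixed $\ZZ$-basis of $\OO$, restricted to a fundamental box $\mathcal{B}(x) \subset \RR^n$ for the action of $(\OO^\times)^2$; detecting the primality of $(\alpha)$ by a von Mangoldt-like weight and applying Vaughan's (or Heath--Brown's) identity splits the spin sum into type I sums of the shape $\sum_\bb a_\bb \sum_\gamma \spin(\bb\gamma, \sigma)$ and type II sums of the shape $\sum_{\bb, \mathfrak{c}} a_\bb b_\mathfrak{c}\, \spin(\bb\mathfrak{c}, \sigma)$, with the supports of $a_\bb$ and $b_\mathfrak{c}$ controlled by a parameter $U$ to be optimized.

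For type I sums, multiplicativity of spin in its first argument isolates an inner sum of the form $\sum_\gamma (\gamma/\bb^\sigma)$, times a factor depending only on $\bb$. The character $(\cdot/\bb^\sigma)$ is a non-principal real quadratic Dirichlet character of conductor dividing $\Norm(\bb^\sigma)$; by fixing all but one of the $\ZZ$-coordinates of $\gamma$, the innermost sum becomes a short-interval character sum of modulus at most $\Norm(\bb^\sigma)$, which Conjecture~$C_\eta$ at $\eta = 1/n$ bounds with a power saving provided $\Norm(\bb)$ lies in the allowed range. Summing trivially over the frozen coordinates and over $\bb$ then yields the desired type I estimate.

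The type II sums are the main obstacle. After Cauchy--Schwarz in $\mathfrak{c}$ and opening the square, one is led to estimating sums of the form $\sum_\mathfrak{c} \chi_{\bb_1, \bb_2}(\mathfrak{c})$, built from the symbols $\left(\alpha_{\bb_i}/\mathfrak{c}^\sigma\right)$ for $i = 1, 2$. Here I would invoke quadratic reciprocity in $K$ to flip $\left(\alpha_{\bb_i}/\mathfrak{c}^\sigma\right)$ into $\left(\gamma_\mathfrak{c}/\bb_i^\sigma\right)$, up to a Hilbert-symbol correction at $2$ controlled by (C4); this trades dependence on the large variable $\mathfrak{c}$ in the numerator for dependence in the modulus. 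Crucially, because $\Gal(K/\QQ)$ is cyclic (condition (C1)) and $\sigma$ is a generator, one has $\pp^\sigma \neq \pp$ for non-trivial $\sigma$, so the flipped character in $\mathfrak{c}$ is generically non-principal and Conjecture~$C_\eta$ again applies. The central technical difficulty is to control the ``diagonal'' pairs $(\bb_1, \bb_2)$ for which the resulting character is principal, and to show that their total contribution remains acceptable; this diagonal analysis is where the cyclic structure of $\Gal(K/\QQ)$ is used most heavily.

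Finally, balancing the type I and type II bounds by optimizing the Vaughan cutoff $U$ against $x$ produces a power saving $x^{-\theta + \epsilon}$ with $\theta = \delta / (2n(12n+1))$; the factors $2$, $n$, and $12n+1$ reflect, respectively, the Cauchy--Schwarz loss in the type II treatment, the loss from freezing all but one of the $n$ $\ZZ$-coordinates when reducing number-field character sums to rational ones, and the combinatorial losses inherent in the Vaughan decomposition.
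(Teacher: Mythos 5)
This statement is quoted from Friedlander--Iwaniec--Mazur--Rubin and the paper does not supply its own proof; it is cited as \cite[Theorem~1.1]{FIMR}. Your sketch does correctly capture the broad architecture of FIMR's argument: parameterize totally positive generators of principal ideals in a fundamental domain, decompose the prime-detecting weight via Vaughan/Heath--Brown into type I and type II pieces, handle type I by reducing to short character sums and invoking Conjecture~$C_\eta$, handle type II by Cauchy--Schwarz after a reciprocity flip, control the diagonal using the structure of $\Gal(K/\QQ)$, and optimize the cutoff to produce $\theta = \delta/(2n(12n+1))$. So if you were asked to reconstruct the proof strategy of the cited result, you have the right picture.

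Two inaccuracies in your narrative are worth flagging, since they misstate which hypotheses are actually doing work. First, you appeal to (C3) to ensure that principal ideals have totally positive generators well-defined modulo squares, but the theorem's hypothesis (P1) alone guarantees $\OO^\times_+ = (\OO^\times)^2$ and that every principal ideal has a totally positive generator (Lemma~\ref{RCFh}); (C3) is not assumed here. Second, and more substantively, you say the Hilbert-symbol correction at $2$ arising from quadratic reciprocity is ``controlled by (C4).'' Condition (C4) (that $2$ is inert) is not a hypothesis of this theorem, and FIMR do not assume it: they neutralize the $2$-adic correction factor by splitting the sum into residue classes modulo a fixed power of $2$ so that the correction becomes a constant on each class. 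Relying on (C4) here would make the argument fail to cover the generality the theorem actually claims.
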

Friedlander et al.\ also proved an analogous result for the case when the summation is restricted to principal prime ideals $\pp$ with totally positive generators satisfying a suitable congruence condition.

By Burgess's inequality, Conjecture~$C_{\eta}$ holds for $\eta=1/3$ with $\delta= \frac{1}{48}$, so Theorem~\ref{thm:FIMR} holds unconditionally for $[K:\QQ]=3$ where $\theta= \frac{1}{10656}$. 

In \cite[Section 11]{FIMR}, Friedlander et al.\ pose some questions about the joint distribution of $\spin(\pp, \sigma)$ and $\spin(\pp, \tau)$ as $\pp$ varies over prime ideals, where $\sigma$ and $\tau$ are two distinct generators of the cyclic group $\Gal(K/\QQ)$. In \cite{JDS}, Koymans and Milovic prove that such spins are distributed independently if $n\geq 5$, i.e., that the product $\spin(\pp, \sigma)\spin(\pp, \tau)$ oscillates similarly as in Theorem~\ref{thm:FIMR}. In fact, they prove that the product of spins
$$
\prod_{\sigma\in S}\spin(\pp, \sigma)
$$
oscillates as long as the fixed non-empty subset $S$ of $\Gal(K/\QQ)$ satisfies the property that $\sigma\not\in S$ whenever $\sigma^{-1}\in S$. Moreover, their result holds for number fields $K$ satisfying property (P1) and having arbitrary Galois groups, i.e., not necessarily satisfying property (C1).

The assumption in \cite{JDS} that $\sigma\not\in S$ whenever $\sigma^{-1}\in S$ is made because $\spin(\pp, \sigma)$ and $\spin(\pp, \sigma^{-1})$ are not independent in the following sense. For a place $v$ of $K$, let $K_{v}$ denote the completion of $K$ at $v$. For $a,b\in K$ coprime to $v$, the Hilbert Symbol $(a, b)_v$ is defined to be $1$ if the equation $ax^2+by^2=z^2$ has a solution $x,y,z\in K_{v}$ with at least one of $x$, $y$, or $z$ non-zero and $-1$ otherwise.

%{\color{blue} Changed assumptions (P1) and (C1) to (P1) and (C3) below. (Typo maybe?) The Galois group need not be cyclic for the result to hold. We do need $h$ to be odd in order to talk about the spin.}
\begin{prop}[{\cite[Lemma~11.1]{FIMR}}]\label{prop:spinrelation}\switchAprint{prop:spinrelation}  Suppose $K$ is a number field satisfying properties (P1) and (C3). Suppose $\pp\subset \OO$ is a prime ideal and $\sigma\in\Gal(K/\QQ)$ is an automorphism such that  $\pp$ and $\pp^\sigma$ are relatively prime. Then
\[
\spin(\pp,\sigma)\spin(\pp,\sigma\inv) = \prod_{v|2} (\alpha,\alpha^\sigma)_v,
\]
where $\alpha$ is a totally positive generator of $\pp^h$ and the product is taken over places $v$ dividing $2$.
\end{prop}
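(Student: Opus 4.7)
The plan is to deduce the identity from Hilbert reciprocity in $K$ applied to the pair $(\alpha,\alpha^\sigma)$, namely
\[
\prod_v (\alpha,\alpha^\sigma)_v = 1,
\]
where the product runs over all places $v$ of $K$. One then splits this product into contributions from archimedean, dyadic, and odd finite places, and identifies the archimedean contribution and the odd-finite contribution with $\spin(\pp,\sigma)\spin(\pp,\sigma^{-1})$, so that the remaining dyadic factor matches the right-hand side.

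At a real place the symbol $(\alpha,\alpha^\sigma)_v$ equals $+1$ because $\alpha$ is totally positive and $\sigma$ permutes the real embeddings of $K$, so $\alpha^\sigma$ is also totally positive; there are no complex places since $K$ is totally real by (P1). For the odd finite places, I would observe that since $\alpha$ generates $\pp^h$ and hence $\alpha^\sigma$ generates $(\pp^\sigma)^h$, the only odd $\qq$ at which either input has non-zero valuation are $\qq=\pp$ and $\qq=\pp^\sigma$, and all other odd primes contribute trivially. At $\pp$ the element $\alpha^\sigma$ is a $\pp$-unit (using $\pp\neq\pp^\sigma$) while $v_\pp(\alpha)=h$, so the tame Hilbert symbol formula $(u,\pi^m)_\pp = \left(\frac{u}{\pp}\right)^m$ for a unit $u$ yields
\[
(\alpha,\alpha^\sigma)_\pp \;=\; \left(\frac{\alpha^\sigma}{\pp}\right)^{h} \;=\; \left(\frac{\alpha^\sigma}{\pp}\right),
\]
where the last equality uses that $h$ is odd by (C3) and that the symbol takes values in $\{\pm1\}$. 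A symmetric computation at $\pp^\sigma$ gives $(\alpha,\alpha^\sigma)_{\pp^\sigma} = \left(\frac{\alpha}{\pp^\sigma}\right) = \spin(\pp,\sigma)$.

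Finally, Galois invariance of the quadratic residue symbol identifies $\left(\frac{\alpha^\sigma}{\pp}\right) = \left(\frac{\alpha}{\pp^{\sigma^{-1}}}\right) = \spin(\pp,\sigma^{-1})$, and substituting everything back into Hilbert reciprocity gives
\[
1 \;=\; 1 \cdot \prod_{v\mid 2}(\alpha,\alpha^\sigma)_v \cdot \spin(\pp,\sigma)\spin(\pp,\sigma^{-1}),
\]
which rearranges to the claim. The only genuinely delicate step is the tame Hilbert symbol computation at $\pp$ and $\pp^\sigma$; all the other ingredients (triviality at archimedean and other odd places, Galois invariance, Hilbert reciprocity) are routine, and the role of (C3) is precisely to turn the $h$-th power of a $\pm1$ quadratic residue symbol into the symbol itself so that the final formula is clean.
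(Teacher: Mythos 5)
Your proof is correct and follows the same overall route as the paper: Hilbert reciprocity $\prod_v(\alpha,\alpha^\sigma)_v=1$, triviality at archimedean places by total positivity, triviality at odd places away from $\pp,\pp^\sigma$, and identification of the remaining two local symbols with the spins. The only (minor) difference in implementation is that where you invoke the tame Hilbert symbol formula $(u,\pi^m)_v=\bigl(\frac{u}{v}\bigr)^m$ both to kill the off-support odd places (two units give a trivial symbol) and to compute $(\alpha,\alpha^\sigma)_\pp$ and $(\alpha,\alpha^\sigma)_{\pp^\sigma}$, the paper instead establishes triviality at the off-support odd places by a hands-on pigeonhole-plus-Hensel argument and simply asserts the identifications at $\pp$ and $\pp^\sigma$. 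Your tame-symbol computation, combined with the Galois invariance $\bigl(\frac{\alpha^\sigma}{\pp}\bigr)=\bigl(\frac{\alpha}{\pp^{\sigma^{-1}}}\bigr)=\spin(\pp,\sigma^{-1})$ and the use of (C3) to reduce the exponent $h$ modulo $2$, is correct and in fact makes the identification at $\pp,\pp^\sigma$ more explicit than the paper does.
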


\begin{proof} 
This is essentially Lemma 11.1 in \cite{FIMR}. 
The proof uses the fact that
\[
\prod_{v}(\alpha,\alpha^\sigma)_v=1.
\]
Since $\alpha\succ 0$, $(\alpha,\alpha^\sigma)_v=1$ for all infinite places $v$.

 % $(\alpha,\alpha^\sigma)_v=1$ for all finite places $v$ away from $\pp$, $\pp^\sigma$, and $2$. 

 Consider $v$, a finite place not equal to $\pp$ or $\pp^\sigma$, and not dividing $2$.
 Since $v\neq \pp,\pp^\sigma$, we have $\alpha$ and $\alpha^\sigma$ are non-zero modulo $v$. Consider the equation 
\[
	\alpha^\sigma x^2 \equiv 1 - \alpha y^2 \bmod v.
	\]
	The right hand side and the left hand side each take on $(\Norm(v)+1)/2$ values, so there is a solution by the pigeon hole principle. 
	It can not be the case that both $x$ and $y$ are $0$. 
	Suppose $x\nequiv 0 \bmod v$. %Rewrite the equation as $x^2- \frac{1-\alpha y^2}{\alpha^\sigma} \equiv 0 \bmod v$.
	Since $v$ is prime to 2 and $x\nequiv 0$, Hensel's Lemma implies there exists a solution in the completion at $v$. Therefore $(\alpha, \alpha^\sigma)_v=1$.
	If $y$ is non-zero, a similar argument works. 

Since $\alpha$ and $\alpha^\sigma$ are relatively prime, $(\alpha,\alpha^\sigma)_\pp = \text{spin}(\pp,\sigma\inv)$ and $(\alpha,\alpha^\sigma)_{\pp^\sigma} = \text{spin}(\pp,\sigma)$. % See 3.4.1Curry for proof in rational setting.
Then since $\prod_{v}(\alpha,\alpha^\sigma)_v=1$, we are done.
\end{proof}

In this paper, we study the joint distribution of multiple spins $\spin(\pp, \sigma)$, $\sigma\in S$, in a setting where there are in fact many $\sigma\in S$ such that $\sigma\inv\in S$ as well. From the discussion above, we see that this might involve combining the work of Koymans and Milovic with the study of the products $\spin(\pp,\sigma)\spin(\pp,\sigma\inv)$ for various $\sigma$.

\subsection{Factorization and Spin}

The spin of prime ideals is related to the splitting behavior of $p$ in both $K_+(p)$ and $K(p)$ as we will see in Proposition \ref{prop:splittingandspin}\switchAprint{prop:splittingandspin} and Corollary \ref{cor:SpinRes}\switchAprint{cor:SpinRes}.

Let $R_\mmm^+$ denote the narrow ray class field over $K$ of conductor $\mmm$. Let $\pp$ be an odd prime of $K$. Recall from Definition~\ref{defn:Kp} that Lemma \ref{Uquadsub}\switchAprint{Uquadsub} gives the existence of a unique quadratic subextension of $R_{\pp}^+/K$, denoted by $K(\pp)$. We have the following proposition for $K$ satisfying properties (P1) and (C3).

\begin{prop}\label{treasuremap}\switchAprint{treasuremap} Suppose $K$ is a totally real number field with odd narrow class number, and suppose $K/\QQ$ is a Galois extension. Let $\pp\subset \OO$ be an odd prime ideal. Let $\alpha\in\OO$ be a totally positive generator of $\pp^h$. Then $$
K(\pp) = K(\sqrt{u\alpha})
$$
for some unit $u\in\OO^\times$ well-defined modulo $(\OO^\times)^2$. We denote the unit class of $u$ by $\mathbf{u}_K(\pp)\in\OO^\times/(\OO^\times)^2$. Furthermore, $\mathbf{u}_K(\pp^\sigma)=\mathbf{u}_K(\pp)^\sigma$ for any $\sigma\in \Gal(K/\QQ)$.
\end{prop}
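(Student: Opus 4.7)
The plan is to first write $K(\pp) = K(\sqrt{\delta})$ for some $\delta \in K^\times$, then show by a ramification analysis that $(\delta) = \pp\mathfrak{b}^2$ for some fractional ideal $\mathfrak{b}$, and finally exploit the odd class number hypothesis to replace $\delta$ by $u\alpha$ modulo $(K^\times)^2$ for a unit $u$.

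Every quadratic extension of $K$ has the form $K(\sqrt{\delta})$ for some $\delta \in K^\times$, well-defined modulo $(K^\times)^2$. Because $K(\pp) \subseteq R_\pp^+$, the extension $K(\pp)/K$ is unramified at every finite prime $\qq \neq \pp$, which forces the $\qq$-adic valuation of $\delta$ to be even at every such $\qq$ (for odd $\qq$ this is standard Kummer theory, and for $\qq\mid 2$ it is still a necessary condition for local unramifiedness, which is all we need). Moreover $K(\pp)/K$ must be ramified at $\pp$: otherwise $K(\pp)$ would lie in the narrow Hilbert class field of $K$, which has odd degree $h^+$ over $K$, contradicting $[K(\pp):K]=2$. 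Hence $(\delta)=\pp\mathfrak{b}^2$ for some fractional ideal $\mathfrak{b}$. Since $|\CL|=h$ is odd, the order of $[\mathfrak{b}]\in\CL$ divides $h$, so $\mathfrak{b}^h$ is principal: $\mathfrak{b}^h=(\mu)$ for some $\mu\in K^\times$.

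Raising to the $h$-th power gives $(\delta^h)=\pp^h\mathfrak{b}^{2h}=(\alpha\mu^2)$, so $\delta^h=u\alpha\mu^2$ for some unit $u\in\OO^\times$. Because $h$ is odd, $\delta\equiv\delta^h\equiv u\alpha\pmod{(K^\times)^2}$, and therefore $K(\pp)=K(\sqrt{u\alpha})$. For the uniqueness of $u$ modulo $(\OO^\times)^2$: if $K(\sqrt{u\alpha})=K(\sqrt{u'\alpha})$ with $u,u'\in\OO^\times$, then $u/u'=\beta^2$ for some $\beta\in K^\times$, and taking ideals yields $(\beta)^2=(1)$, so $\beta$ is itself a unit and $u\equiv u'\pmod{(\OO^\times)^2}$.

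For the Galois equivariance, any $\sigma\in\Gal(K/\QQ)$ carries the narrow ray class field $R_\pp^+$ to $R_{\pp^\sigma}^+$ (both being characterized by their conductor), so $K(\pp)^\sigma=K(\pp^\sigma)$. Also $\alpha^\sigma$ is a totally positive generator of $(\pp^\sigma)^h$, since $\Gal(K/\QQ)$ permutes the real embeddings. Applying $\sigma$ to $K(\pp)=K(\sqrt{\mathbf{u}_K(\pp)\alpha})$ yields $K(\pp^\sigma)=K(\sqrt{\mathbf{u}_K(\pp)^\sigma\alpha^\sigma})$, and the uniqueness just proved gives $\mathbf{u}_K(\pp^\sigma)\equiv\mathbf{u}_K(\pp)^\sigma\pmod{(\OO^\times)^2}$. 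The main obstacle is the ramification analysis at $\pp$, which I dispose of via the narrow Hilbert class field argument above; everything else reduces to Kummer theory plus a short computation in the odd class group.
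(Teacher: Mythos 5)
Your proof is correct, and for the Galois-equivariance claim it takes a genuinely different (and cleaner) route than the paper.

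For the existence of $u$, your argument is essentially the same in substance as the paper's: both reduce to showing that, modulo squares, a Kummer generator of $K(\pp)/K$ has ideal of the form $\pp\mathfrak{b}^2$ (the paper reaches this via an Eisenstein generator supplied by a reference to Koch, you reach it by directly reading off valuations), and both then exploit odd $h$ to replace $\mathfrak{b}$ by a principal ideal. Your parenthetical about $\qq \mid 2$ is exactly the right thing to say: one only needs the forward implication (unramified implies even valuation), which holds at dyadic places too. Your uniqueness argument --- take ideals in $u/u' = \beta^2$ to force $\beta \in \OO^\times$ --- is slightly slicker than the paper's, which expands $\sqrt{u\alpha} = r_1 + r_2\sqrt{v\alpha}$ and reasons from there.

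Where you genuinely diverge is the equivariance $\mathbf{u}_K(\pp^\sigma) = \mathbf{u}_K(\pp)^\sigma$. The paper proves this by a direct ramification computation: it sets $L_\sigma = K(\sqrt{(u\alpha)^\sigma})$, shows via Eisenstein conditions that any prime ramifying in $L_\sigma/K$ pulls back under $\sigma^{-1}$ to a prime ramifying in $L/K$, and concludes $L_\sigma \subseteq R^+_{\pp^\sigma}$ so $L_\sigma = K(\pp^\sigma)$. You instead invoke the functoriality of the ray class field under the Galois action: an extension of $\sigma$ to $\overline{\QQ}$ carries the maximal abelian extension of conductor dividing $\pp\infty$ to the maximal abelian extension of conductor dividing $\pp^\sigma\infty$, so $\sigma(R_\pp^+) = R_{\pp^\sigma}^+$, hence $\sigma(K(\pp)) = K(\pp^\sigma)$ by uniqueness of the quadratic subextension, and you then just apply $\sigma$ to $K(\pp) = K(\sqrt{u\alpha})$ and read off the result from your uniqueness statement. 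This bypasses the paper's Eisenstein bookkeeping entirely and is the more conceptual proof; the only cost is relying on the behavior of ray class fields under a Galois twist of the base, which is standard but is being used as a black box.
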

\begin{proof}
By \cite[3.10.3 and 3.10.4]{HelmutKoch}, and since $\pp$ is totally ramified in $K(\pp)/K$ as in \ref{lem:possiblesplittings}\switchAprint{lem:possiblesplittings},
$
K(\pp)=K(\gamma)
$
for some integral $\gamma\in K(\pp)$ with minimal polynomial over $K$ Eisenstein at $\pp$.
If $f_\gamma(x)=x^2 + c_1 x + c_0\in \OO[x]$ is the minimal polynomial of $\gamma$ over $K$, we could take $\gamma'\defeq  2\gamma + c_1\in K(\pp)$ instead as a primitive element and the minimal polynomial of $\gamma'$ is $x^2 - (c_1^2-4c_0)$, which is also Eisenstein at $\pp$ since $\pp$ is odd. Therefore we may assume that the minimal polynomial of $\gamma$ over $K$ takes the form
$
f_\gamma(x) = x^2 - c
$
for some $c\in \OO$ where $f_\gamma(x)$ is Eisenstein at $\pp$.

If $(c)$ had a prime factor $\qq\neq \pp$ with odd multiplicity, then $K(\pp)$ would be ramified at $\qq$, which is impossible as $K(\pp)\subseteq R^+_{\pp}$. Therefore
$
(c) = \pp I^2
$
for some ideal $I\subseteq \OO$ coprime to $\pp$.
Let $b\in\OO$ be a generator of $I^{h}$. Let $\alpha\in\OO$ be a totally positive generator of $\pp^h$, which exists by Lemma \ref{RCFh}\switchAprint{RCFh}. Raising to the power of $h$ gives $(c)^h=(\alpha)(b)^2$.
Since $h$ is odd, we can write
\begin{equation}
\label{eq:tm1}
u\alpha = c \left( \frac{c^{({h-1})/{2}}}{b} \right)^2
\end{equation}
for some unit $u\in\OO^\times$. Therefore 
$$
K(\sqrt{u\alpha}) = K(\sqrt{c})=K(\gamma)=K(\pp).
$$
As $K(\pp)\subset R_{\pp}^+$, we note that $u\alpha$ is a square in $R_\pp^+$.

If $K(\sqrt{u\alpha})$ and $K(\sqrt{v\alpha})$ are both contained in $R_\pp^+$ for $u,v\in \OO^\times$, then $K(\sqrt{u\alpha})=K(\sqrt{v\alpha})$ by uniqueness of the quadratic subextension coming from Lemma \ref{Uquadsub}\switchAprint{Uquadsub}. Then we can write 
\[
\sqrt{u\alpha} = r_1 + r_2\sqrt{v\alpha}
\]
for some $r_1,r_2\in K$. Then $u\alpha = r_1^2 +2r_1r_2\sqrt{v\alpha}+v\alpha r_2^2$ so one of $r_1$ or $r_2$ must be $0$. Since $u\alpha$ generates a prime to an odd power, $u\alpha$ cannot be a square in $K$ so $r_2\neq 0$. Then $r_1=0$ so $u\alpha= v\alpha r_2^2$. Therefore $r_2$ is a unit and $u$ and $v$ represent the same class in ${\OO^\times/(\OO^\times)^2}$.

It remains to show that $\mathbf{u}_K(\pp^\sigma)=\mathbf{u}_K(\pp)^\sigma$ for any $\sigma\in \Gal(K/\QQ)$.  It suffices to show that $u^{\sigma}$ is in the class $\mathbf{u}_K(\pp^{\sigma})$ for $u$ a representative of $\mathbf{u}_K(\pp)$. Since $\alpha^{\sigma}$ is a totally positive generator of $(\pp^{\sigma})^h$, it suffices to prove that $K(\pp^{\sigma}) = K\left(\sqrt{(u\alpha)^{\sigma}}\right)$.

Let $u\in\OO^\times$ be a representative of $\mathbf{u}_K(\pp)$. Let $L=K\left(\sqrt{u\alpha}\right)$ so that $L=K(\pp)$, and let $L_\sigma = K\left(\sqrt{(u\alpha)^\sigma}\right)$. 
It is clear that $\pp^\sigma$ ramifies in $L_\sigma/K$. Suppose an arbitrary prime $\qq$ ramifies in $L_\sigma/K$. Then $L_\sigma=K\left(\sqrt{d}\right)$ for some $d\in \OO$ such that $x^2-d$ is Eisenstein at $\qq$ and $(u\alpha)^\sigma=r^2d$ for some $r\in K$. Then 
\[
u\alpha=\left(r^{\sigma\inv}\right)^2d^{\sigma\inv},
\]
so $L=K\left(\sqrt{d^{\sigma\inv}}\right)$.
Since $x^2-d$ is Eisenstein at $\qq$, $x^2-d^{\sigma\inv}$ is Eisenstein at $\qq^{\sigma\inv}$, so $\qq^{\sigma\inv}$ is ramified in $L/K$. Therefore if any prime except for $\pp^\sigma$ were to ramify in $L_\sigma/K$, this would contradict that $L\subseteq R_\pp^+$, proving that $L_\sigma=K(\pp^\sigma)$.
\end{proof}

%%%%%%%%%%%%%%

%\begin{cor}\label{cor:Kp}\switchAprint{cor:Kp}
%\switchAprint{(Assumptions on $K$ as in \ref{treasuremap}\switchAprint{treasuremap})}
%Let $\pp$ be a fixed prime of $K$ above a rational prime $p$, and let $\sigma\in\Gal(K/\QQ)$. Let $\alpha$ be a totally positive generator of $\pp^h$. Let 
%    $R_\pp(\sigma)\defeq 
%    K(\sqrt{(u\alpha)^\sigma}), $
%where $u \defeq \mathbf{u}_K(\pp) $.
%Then $K(p)$ is the composite of $\{R_\pp(\sigma): \sigma\in\Gal(K/\QQ)\}$. 
%\end{cor}

%\begin{proof}
%Recall from Definition \ref{defn:Kp}\switchAprint{[defn:Kp]} that $K(p)$ is the composite of the unique quadratic subextensions of the narrow ray class fields over $K$ of conductors $\pp^\sigma$ as $\sigma$ varies over all elements of $\OO$, where $\pp$ is a prime of $K$ above $p$. 

%These quadratic extensions of $K$ are exactly  $\{K(\sqrt{\mathbf{u}_K(\pp^\sigma)\alpha^\sigma}): \sigma\in\Gal(K/\QQ)\}$ by Proposition \ref{treasuremap}\switchAprint{treasuremap}, and $\mathbf{u}_K(\pp^\sigma)=\mathbf{u}_K(\pp)^\sigma$.
%% As in the beginning of the proof of \ref{treasuremap}, we can write R_\pp(1)=K(\sqrt{c}) for some c\in\OO st x^2-c is Eisenstein at \pp (and no other prime). x^2-c Eisenstein at \pp implies x^2-c^\sigma Eisenstein at \pp^sigma. x^2-c^\sigma Eisenstein at \qq implies x^2-c Eisenstein at \qq^{\sigma\inv}.
%\end{proof}

\begin{lem}\label{QR}\switchAprint{QR}
Suppose $K$ is a number field and $h$ is an odd number. Suppose $\aaa$ and $\bb$ are distinct odd primes of $K$, and suppose $\alpha$ and $\beta$ are totally positive generators of $\aaa^h$ and $\bb^h$, respectively, such that any prime above $2$ is unramified in $K(\sqrt{\beta})/K$. Then
\[
\left( \frac{\alpha}{\bb} \right)=\left( \frac{\beta}{\aaa} \right),
\]
where $(\cdot/\cdot)$ denotes the quadratic residue symbol in $K$.
\end{lem}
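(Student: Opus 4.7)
The plan is to invoke the global product formula for the Hilbert symbol, $\prod_v (\alpha, \beta)_v = 1$, and show that every local factor is trivial except those at $v = \aaa$ and $v = \bb$, while these two surviving factors recover precisely the quadratic residue symbols in the statement. This is exactly the strategy used in the proof of Proposition~\ref{prop:spinrelation} above, applied in a slightly different setting.

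First I would dispose of the "easy" local factors. At each real archimedean place, the total positivity of $\alpha$ and $\beta$ forces $(\alpha, \beta)_v = 1$. At any finite place $v \nmid 2$ distinct from $\aaa$ and $\bb$, both $\alpha$ and $\beta$ are local units (since $\alpha$ generates a power of $\aaa$ and $\beta$ a power of $\bb$), and the tame Hilbert symbol of two units at a prime of odd residue characteristic vanishes; this can be verified by the same Hensel-plus-pigeonhole argument already carried out in the proof of Proposition~\ref{prop:spinrelation}. At $v = \aaa$, the tame symbol formula, applied with $v_\aaa(\alpha) = h$ and $v_\aaa(\beta) = 0$, gives $(\alpha, \beta)_\aaa = \left(\frac{\beta}{\aaa}\right)^{-h}$, which equals $\left(\frac{\beta}{\aaa}\right)$ because $h$ is odd and the symbol takes values in $\{\pm 1\}$. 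The symmetric calculation at $v = \bb$ yields $(\alpha, \beta)_\bb = \left(\frac{\alpha}{\bb}\right)$.

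The substantive step, and the only place where the hypothesis on $\beta$ is used, is at places $v \mid 2$. Since $\aaa$ and $\bb$ are odd, both $\alpha$ and $\beta$ are local units at each such $v$. The assumption that $K(\sqrt{\beta})/K$ is unramified above $2$ means that locally either $\beta$ is already a square in $K_v$, in which case $(\alpha, \beta)_v = 1$ trivially, or $K_v(\sqrt{\beta})/K_v$ is the unique unramified quadratic extension. In the latter case, the norm map on units is surjective for unramified extensions of local fields, so $\alpha$ (a local unit) is a norm from $K_v(\sqrt{\beta})/K_v$, giving $(\alpha, \beta)_v = 1$ once more.

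Combining all the local calculations, the product formula collapses to $(\alpha, \beta)_\aaa \cdot (\alpha, \beta)_\bb = 1$, which rearranges to the desired reciprocity $\left(\frac{\alpha}{\bb}\right) = \left(\frac{\beta}{\aaa}\right)$. The main obstacle is precisely the handling of primes above $2$: these are the wild places where the Hilbert symbol need not vanish for units in general, and the hypothesis that $K(\sqrt{\beta})/K$ is unramified above $2$ is exactly the minimal condition that neutralizes those contributions and makes the reciprocity come out clean.
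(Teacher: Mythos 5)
Your proof is correct. It departs from the paper's proof in packaging rather than in substance, but the departure is genuine enough to note. The paper first uses multiplicativity of the quadratic residue symbol in the denominator (and oddness of $h$) to rewrite $\left(\frac{\alpha}{\bb}\right) = \left(\frac{\alpha}{\beta}\right)$ and $\left(\frac{\beta}{\aaa}\right) = \left(\frac{\beta}{\alpha}\right)$, and then cites Neukirch's quadratic reciprocity law for $K$, which bundles the tame-symbol calculation into the identity $\left(\frac{\alpha}{\beta}\right) = \left(\frac{\beta}{\alpha}\right)\prod_{v\mid 2\infty}(\alpha,\beta)_v$, leaving only the local factors at $2$ and $\infty$ to be shown trivial. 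You instead go directly to the Hilbert product formula $\prod_v(\alpha,\beta)_v = 1$ and compute every local factor: total positivity kills the archimedean places; triviality of the tame symbol of two units handles $v\nmid 2$, $v\neq\aaa,\bb$; the explicit tame-symbol formula identifies $(\alpha,\beta)_{\aaa}$ with $\left(\frac{\beta}{\aaa}\right)$ and $(\alpha,\beta)_{\bb}$ with $\left(\frac{\alpha}{\bb}\right)$ (using $h$ odd to absorb the exponent); and the unramification hypothesis plus surjectivity of the norm on units for unramified local extensions kills the dyadic places. In effect you re-derive the special case of Neukirch's reciprocity law that is needed rather than invoking it. The paper's version is shorter because it delegates the tame computation to the citation; yours is more self-contained and makes it explicit why only the dyadic hypothesis on $\beta$ is needed. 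The local arguments at $2$ and $\infty$ — the only places requiring a hypothesis — are identical in the two proofs, with yours supplying the unramified-norm justification that the paper asserts without comment.
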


\begin{proof}
Since $h$ is odd and $\bb$ and $\aaa$ are coprime, we have
\begin{equation}\label{eq:QR:2}
\left( \frac{\alpha}{\bb} \right) = \left( \frac{\alpha}{\bb} \right)^h = \left( \frac{\alpha}{\bb^h} \right) = \left( \frac{\alpha}{\beta} \right).
\end{equation}
Similarly,
\begin{equation}\label{eq:QR:1}
\left( \frac{\beta}{\aaa} \right) = \left( \frac{\beta}{\aaa} \right)^h = \left( \frac{\beta}{\aaa^h} \right) = \left( \frac{\beta}{\alpha} \right).
\end{equation}
By the law of quadratic reciprocity for $K$ \cite[Theorem VI.8.3, p.\ 415]{NeukirchANT}, we have
$$
\left( \frac{\alpha}{\beta} \right)=\left( \frac{\beta}{\alpha} \right) \prod_{v|2\infty}(\alpha,\beta)_v,
$$
where $(\cdot, \cdot)_v$ is the Hilbert symbol on $K$ and the product above is over all places $v$ lying above $2$ and infinity.
 
For each infinite place $v$, we have $(\alpha,\beta)_v = 1$ since $\alpha$ is totally positive (and thus also positive in the embedding of $K$ into $\mathbb{R}$ corresponding to $v$).
For any place $v$ lying above $2$, we have $(\alpha,\beta)_v = 1$ since $\alpha$ is coprime to $2$ and any even prime is unramified in $K(\sqrt{\beta})/K$.
We thus deduce that
$$
\left( \frac{\beta}{\alpha} \right) = \left( \frac{\alpha}{\beta} \right),
$$
which in combination with \eqref{eq:QR:1} and \eqref{eq:QR:2} yields the desired result.
\end{proof}

Given a rational prime $p$, fix a prime $\pp$ above $p$ and a totally positive generator $\alpha$ of $\pp^h$.
Recall from Definition \ref{defn:Kpplus}\switchAprint{defn:Kpplus} that  $K_+(p)$ is the composite of $K_+(\pp^{\sigma})$ as $\sigma$ varies over all elements of $\Gal(K/\QQ)$, where $K_+(\pp^{\sigma})\defeq K(\sqrt{\alpha^\sigma})$. As before, denote by $K(\pp^{\sigma})$ the unique quadratic subextension of the narrow ray class field over $K$ of conductor $\pp^{\sigma}$. 

 The factorization of $p$ in $K_+(p)$ or $K(p)$ is determined by the factorizations of $\pp$ in each $K_+(\pp^{\sigma})$ or $K(\pp^{\sigma})$ respectively, which is in turn determined by the spin of $\pp$ with respect to $\sigma$ or $\sigma^{-1}$, respectively.

For an abelian extension of number fields $L/E$ and a prime $\pp$ of $E$, let $f_{L/E}(\pp)$ denote the residue field degree of $\pp$ in $L/E$.

\begin{prop}\label{prop:splittingandspin}\switchAprint{prop:splittingandspin}
Assume $K$ satisfies properties (P1) and (C3). %and (C4). %is totally real and Galois over $\QQ$ with odd class number $h$ such that $\CL^+ = \CL$ and $2$ is inert in $K/\QQ$.
For a fixed odd prime $\pp$ of $K$ that splits completely in $K/\QQ$ and $\sigma$ non-trivial in $\Gal(K/\QQ)$, the following are equivalent.
\begin{enumerate}
    \item $\spin(\pp,\sigma) = 1 $,
    \item $f_{K(\pp^{\sigma})/K}(\pp)=1$,
    \item $f_{K_+(\pp^{\sigma\inv})/K}(\pp)=1$.
\end{enumerate}
\end{prop}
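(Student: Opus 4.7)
I will prove (1)$\iff$(3) by unfolding the definitions and using Galois equivariance of the quadratic residue symbol, and prove (1)$\iff$(2) via quadratic reciprocity in $K$, exploiting the fact that $K(\pp)\subset R^+_\pp$ is unramified at primes above $2$ in order to kill the correction terms.

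For (1)$\iff$(3): since $\sigma$ permutes the real embeddings, $\alpha^{\sigma^{-1}}$ is a totally positive generator of $(\pp^{\sigma^{-1}})^h$, so $K_+(\pp^{\sigma^{-1}})=K(\sqrt{\alpha^{\sigma^{-1}}})$ by Definition~\ref{defn:Kpplus}. The prime $\pp$ is odd, distinct from $\pp^{\sigma^{-1}}$, and coprime to $\alpha^{\sigma^{-1}}$, hence unramified in this extension, and splits iff $\alpha^{\sigma^{-1}}$ is a nonzero square modulo $\pp$. Since $\sigma$ induces a residue-field isomorphism $\OO/\pp^{\sigma^{-1}}\to\OO/\pp$ that carries squares to squares, $\left(\frac{\alpha^{\sigma^{-1}}}{\pp}\right)=\left(\frac{\alpha}{\pp^\sigma}\right)=\spin(\pp,\sigma)$, establishing (1)$\iff$(3).

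For (1)$\iff$(2): by Proposition~\ref{treasuremap}, $K(\pp)=K(\sqrt{u\alpha})$ for some $u\in\OO^\times$ representing $\vK(\pp)$; since $\vK(\pp^\sigma)=\vK(\pp)^\sigma$, $K(\pp^\sigma)=K(\sqrt{(u\alpha)^\sigma})$. Because $K(\pp^\sigma)\subset R^+_{\pp^\sigma}$ has conductor $\pp^\sigma\cdot\infty$, it is unramified at every finite prime $\neq\pp^\sigma$; in particular $\pp$ is unramified in $K(\pp^\sigma)/K$, so $\pp$ splits iff $\left(\frac{(u\alpha)^\sigma}{\pp}\right)=1$, equivalently $\left(\frac{u\alpha}{\pp^{\sigma^{-1}}}\right)=1$ by Galois equivariance. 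I next apply quadratic reciprocity in $K$ to the coprime nonzero elements $u\alpha$ and $\alpha^{\sigma^{-1}}$, both coprime to $2$ and generating the odd prime powers $\pp^h$ and $(\pp^{\sigma^{-1}})^h$; converting element symbols to ideal symbols using that $h$ is odd gives
\[
\left(\frac{u\alpha}{\pp^{\sigma^{-1}}}\right)\left(\frac{\alpha^{\sigma^{-1}}}{\pp}\right)=\prod_{v\mid 2\infty}(u\alpha,\alpha^{\sigma^{-1}})_v.
\]
At real places $v$ the Hilbert symbol is trivial because $\alpha^{\sigma^{-1}}$ is positive at every real embedding. At $v\mid 2$, since $K(\sqrt{u\alpha})=K(\pp)\subset R^+_\pp$ is unramified at $v$, the local extension $K_v(\sqrt{u\alpha})/K_v$ is unramified (or trivial), and since $\alpha^{\sigma^{-1}}$ is a unit at $v$, it is a norm from this extension, forcing $(u\alpha,\alpha^{\sigma^{-1}})_v=1$. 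Combined with (1)$\iff$(3), this establishes (1)$\iff$(2).

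The main obstacle is the $2$-adic Hilbert symbol computation. The crucial insight is that the unit class $\vK(\pp)$ supplied by Proposition~\ref{treasuremap} is precisely the correction needed so that $K(\sqrt{u\alpha})$ is unramified at every prime above $2$; this unramifiedness is exactly what forces the $2$-adic contributions in the reciprocity formula to vanish, reducing the identity to the desired equality of symbols.
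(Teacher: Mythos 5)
Your proof is correct and follows essentially the same approach as the paper's: Galois equivariance of the quadratic residue symbol gives (1)$\iff$(3), and Proposition~\ref{treasuremap} together with quadratic reciprocity in $K$ (with $2$-adic Hilbert symbols vanishing because $K(\pp^\sigma)\subset R^+_{\pp^\sigma}$ is unramified at even primes) gives (1)$\iff$(2). The only cosmetic difference is that the paper packages the reciprocity step as Lemma~\ref{QR} and applies it to the pair at $\pp,\pp^\sigma$, whereas you apply reciprocity inline to the Galois-conjugate pair at $\pp^{\sigma^{-1}},\pp$ and then relay through (1)$\iff$(3).
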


\begin{proof}
For $\gamma\in\OO$, $L\defeq K(\sqrt{\gamma})$, and $\qq$ any prime of $K$, $f_{L/K}(\qq)=1$ if and only if $\gamma$ is a square modulo $\qq$ because the natural injective homomorphism of residue class fields is surjective exactly when $\sqrt{\gamma}$ has a pre-image. If $\qq$ is unramified in $L/K$, then $f_{L/K}(\qq)=1$ exactly when the quadratic residue $(\gamma/\qq)=1$.

Take $\pp$ to be an odd prime of $K$ splitting completely in $K/\QQ$, take $\alpha$ to be a totally positive generator of $\pp^h$, and take $\sigma\in\Gal(K/\QQ)$ non-trivial. Then $\pp$ is unramified in $K_+(\pp^{\sigma\inv})/K$ so $f_{K_+(\pp^{\sigma\inv})/K}(\pp)=1$ if and only if
\[
\left(\frac{\alpha^{\sigma\inv}}{\pp}\right)= \left(\frac{\alpha}{\pp^\sigma}\right)=\spin(\pp,\sigma)=1.
\]

By Proposition \ref{treasuremap}\switchAprint{treasuremap},  $K(\pp^{\sigma}) =K(\sqrt{(u\alpha)^\sigma})$ where $u$ is in the unit class $\mathbf{u}_K(\pp)$. Then since $\pp$ is unramified, $f_{K(\pp^{\sigma})/K}(\pp)=1$ if and only if
\[
\left(\frac{(u\alpha)^\sigma}{\pp}\right)=1.
\]
Lemma \ref{QR}\switchAprint{QR} applies because $\pp$ and $\pp^\sigma$ are co-prime and primes above $2$ are unramified in $K(\sqrt{(u\alpha)^\sigma})/K$ since $K(\sqrt{(u\alpha)^\sigma})\subseteq R_{\pp^\sigma}^+$. Therefore
 $
 \left({(u\alpha)^\sigma}/{\pp}\right)=\left({\alpha}/{\pp^\sigma}\right),
 $
 so $f_{K(\pp^{\sigma})/K}(\pp)=1$ if and only if
 \[
\left(\frac{\alpha}{\pp^\sigma}\right)=\spin(\pp,\sigma)=1.\qedhere
\]
\end{proof}

\begin{cor}\label{cor:SpinRes}\switchAprint{cor:SpinRes}
For a fixed odd rational prime $p$ splitting completely in $K/\QQ$, the residue field degrees of $p$ in the extensions $K(p)/\QQ$ and $K_+(p)/\QQ$ are equal to $1$ if and only if $\spin(\pp,\sigma)=1$ for all non-trivial $\sigma\in\Gal(K/\QQ)$. Otherwise these residue field degrees are equal to $2$.
\end{cor}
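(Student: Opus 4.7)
Fix an odd prime $p$ splitting completely in $K/\QQ$ and a prime $\pp$ of $K$ above $p$. Because $p$ splits completely, $f_{K/\QQ}(p)=1$, so for either Galois extension $L \in \{K(p), K_+(p)\}$ we have $f_{L/\QQ}(p) = f_{L/K}(\pp)$, and by Lemma~\ref{lem:possiblesplittings} this value lies in $\{1,2\}$. Hence it suffices to show that, for both choices of $L$, the condition $f_{L/K}(\pp)=1$ is equivalent to $\spin(\pp,\sigma)=1$ for every non-trivial $\sigma\in\Gal(K/\QQ)$. My strategy is to extract this from Proposition~\ref{prop:splittingandspin} via the multi-quadratic Galois theory of $L/K$.

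Next I analyze the compositum. By Lemma~\ref{lem:possiblesplittings}, $[L:K]=2^n$, and since $L$ is the compositum of $n$ quadratic extensions (namely $K(\pp^\sigma)$, respectively $K_+(\pp^\sigma)$, for $\sigma\in\Gal(K/\QQ)$), these quadratic extensions must be linearly disjoint over $K$ and $\Gal(L/K)\cong(\ZZ/2)^n$. For each $\sigma$, let $H_\sigma^L \subset \Gal(L/K)$ be the index-$2$ subgroup fixing the $\sigma$-factor. Now $L/K$ is abelian, so the decomposition and inertia groups $D_L(\pp)\supseteq I_L(\pp)$ are well-defined subgroups. By Lemma~\ref{lem:possiblesplittings}(2), $|I_L(\pp)|=2$. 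For $\sigma\neq\mathrm{id}$ the extension $K(\pp^\sigma)/K$ (resp.\ $K_+(\pp^\sigma)/K$) is ramified only at $\pp^\sigma\neq\pp$, so $\pp$ is unramified in it; hence $I_L(\pp)\subseteq H_\sigma^L$ for every $\sigma\neq\mathrm{id}$. The intersection $\bigcap_{\sigma\neq\mathrm{id}} H_\sigma^L$ is the Galois group of $L$ over the compositum of the $n-1$ factors indexed by $\sigma\neq\mathrm{id}$, a field of degree $2^{n-1}$ over $K$; so this intersection has order $2$, forcing
\[
I_L(\pp) \;=\; \bigcap_{\sigma\neq\mathrm{id}} H_\sigma^L.
\]

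With this identification in hand, $f_{L/K}(\pp)=|D_L(\pp)/I_L(\pp)|=1$ iff $D_L(\pp)=I_L(\pp)$ iff $D_L(\pp)\subseteq H_\sigma^L$ for every non-trivial $\sigma$, which in turn is equivalent to $f_{M/K}(\pp)=1$ for each quadratic factor $M$ of $L$ indexed by non-trivial $\sigma$. For $L=K(p)$, Proposition~\ref{prop:splittingandspin} translates $f_{K(\pp^\sigma)/K}(\pp)=1$ into $\spin(\pp,\sigma)=1$; for $L=K_+(p)$, the same proposition (applied with $\sigma$ replaced by $\sigma^{-1}$) translates $f_{K_+(\pp^\sigma)/K}(\pp)=1$ into $\spin(\pp,\sigma^{-1})=1$, and letting $\sigma$ range over $\Gal(K/\QQ)\setminus\{\mathrm{id}\}$ is the same as letting $\sigma^{-1}$ do so. Combining both cases, both residue field degrees equal $1$ simultaneously iff $\spin(\pp,\sigma)=1$ for all non-trivial $\sigma$, and otherwise both equal $2$.

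The step I expect to require the most care is the identification $I_L(\pp)=\bigcap_{\sigma\neq\mathrm{id}} H_\sigma^L$: the containment $\subseteq$ uses that $\pp\neq\pp^\sigma$ is unramified in each individual factor away from its own index, while the opposite containment is forced by the order count, which in turn relies on the linear disjointness of the $n$ quadratic factors implicit in $[L:K]=2^n$ from Lemma~\ref{lem:possiblesplittings}. Once this is in place, everything reduces cleanly to Proposition~\ref{prop:splittingandspin}.
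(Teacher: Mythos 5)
Your proof is correct and follows essentially the same route as the paper's: reduce $f_{L/K}(\pp)$ to the residue degrees in the quadratic factors $K(\pp^\sigma)$, resp.\ $K_+(\pp^\sigma)$, then invoke Proposition~\ref{prop:splittingandspin}. The one difference is that you supply the decomposition/inertia-group argument (identifying $I_L(\pp)$ with $\bigcap_{\sigma\neq\mathrm{id}}H_\sigma^L$ via the order count) that justifies the reduction, whereas the paper's proof simply asserts ``$f_{K(p)/\QQ}(p)=1$ exactly when $f_{K(\pp^\sigma)/K}(\pp)=1$ for all $\sigma$'' without proof — so your version makes explicit the nontrivial direction of that equivalence.
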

\begin{proof}
$f_{K(p)/\QQ}(p)=1$ exactly when $f_{K(\pp^{\sigma})/K}(\pp)=1$ for all $\sigma\in\Gal(K/\QQ)$. Similarly, $f_{K_+(p)/\QQ}(p)=1$ exactly when $f_{K_+(\pp^{\sigma\inv})/K}(\pp)=1$ for all $\sigma\in\Gal(K/\QQ)$. Apply Proposition \ref{prop:splittingandspin}\switchAprint{prop:splittingandspin}. If the residue field degrees are not equal to $1$ then they are equal to $2$ by Lemma \ref{lem:possiblesplittings}.
\end{proof}

\section{A Consequence of the Chebotarev Density Theorem}\label{sec:dRS}

In this section, we use the Chebotarev Density Theorem to prove that the primes of $K$ are equidistributed in $\mathbf{M}_4$ as defined below, where the mapping takes primes to a totally positive generator considered in $\mathbf{M}_4$. This contributes toward the density $d(R|S)$ of rational primes $p$ that 
satisfy the spin relation,
\[
\spin(\pp,\sigma) \spin(\pp,\sigma\inv)=1 \quad \text{for all non-trivial } \sigma\in\Gal(K/\QQ), 
\]
 where $\pp$ is a prime of $K$ above $p$, restricted to the rational primes splitting completely in $K/\QQ$. We will also give this density restricted modulo $4\ZZ$. Theorem \ref{thm:pmdensityformulas}\switchAprint{thm:pmdensityformulas} and Proposition \ref{prop:Starlight}\switchAprint{prop:Starlight} together give the density of such primes satisfying the spin relation.
 
 \begin{defn}\label{defn:MK}\switchAprint{defn:MK}
For $q$ a power of $2$, define
\[
\mathbf{M}_q:= (\OO/q\OO)^\times / \left( (\OO/q\OO)^\times \right)^2.
\]
Note that $\mathbf{M}_q$ is a group with a natural action from $\Gal(K/\QQ)$.
\end{defn}

\begin{prop}\label{prop:Mundy:ext}\switchAprint{prop:Mundy:ext}
Let $K$ be a number field  satisifying (C1) and (C4). Then
\begin{enumerate}
    \item $\MK \ism (\ZZ/2)^n$ as a $\ZZ/2$-vector space, 
    \item the invariants of the action of $\Gal(K/\QQ)$ on $\MK$ are exactly $\pm 1$.
    % \item $\mathbf{M}_8 \ism (\ZZ/2)^{n+1}$ as $\ZZ/2$-vector spaces, and
    % \item the kernel of the surjection $\mathbf{M}_8 \surj \MK$ is $\left<5\right>$.
\end{enumerate}
\end{prop}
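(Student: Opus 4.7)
The plan is to describe $\MK$ explicitly by exploiting the local structure at the inert prime $\pp \defeq (2)\OO$ and then to read off the Galois invariants from the resulting additive model. By (C4), $\pp$ is prime with residue field $\OO/\pp \ism \FF_{2^n}$, so $\OO/4\OO$ is a local ring with maximal ideal $\pp/4\OO$ of order $2^n$. I would begin with the $\Gal(K/\QQ)$-equivariant short exact sequence
\[
1 \to 1 + \pp/4\OO \to (\OO/4\OO)^\times \to (\OO/\pp)^\times \to 1,
\]
which splits because the outer groups have coprime orders $2^n$ and $2^n - 1$.

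The next step is to compute squaring on each factor. On $(\OO/\pp)^\times$, squaring is bijective since the group has odd order, so these classes contribute trivially to the quotient $\MK$. On $1 + \pp/4\OO$, writing an element as $1 + 2y$ with $y \in \OO$, the direct computation $(1+2y)^2 = 1 + 4y + 4y^2 \equiv 1 \pmod{4\OO}$ shows that squaring is the trivial map. Hence $\MK$ is identified with $1 + \pp/4\OO$, and the correspondence $1 + 2y \mapsto y \bmod \pp$ promotes to a $\Gal(K/\QQ)$-equivariant group isomorphism $\MK \xrightarrow{\sim} \OO/\pp \ism \FF_{2^n}$. This immediately yields (1), since $\FF_{2^n} \ism (\ZZ/2)^n$ as an $\FF_2$-vector space.

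For (2), the plan is to compute the invariants on $\FF_{2^n}$ and transport them back through this isomorphism. Because (C4) forces $2$ to be unramified with full residue degree $n$, the decomposition group at $\pp$ is all of $\Gal(K/\QQ)$ and the inertia is trivial, so the natural surjection $\Gal(K/\QQ) \surj \Gal(\FF_{2^n}/\FF_2)$ is an isomorphism of cyclic groups of order $n$ (consistent with (C1)). The fixed points of this action on $\FF_{2^n}$ are exactly $\FF_2 = \{0,1\}$, and pulling back through $y \mapsto 1 + 2y$ these correspond to the classes of $1$ and $3 \equiv -1 \pmod{4\OO}$ in $\MK$, establishing (2). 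There is no substantive obstacle here: the entire argument is a direct structural computation, with the only non-formal input being the identification of the decomposition group with $\Gal(\FF_{2^n}/\FF_2)$, which follows at once from (C4).
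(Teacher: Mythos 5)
Your proof is correct and takes essentially the same route as the paper: both identify $\MK$ with $1+2\OO/4\OO$ by observing that squaring is trivial on that subgroup and bijective on the odd-order group $(\OO/2)^\times$, and then read off the Galois invariants from the $\Gal(K/\QQ)$-equivariant isomorphism $[1+2y]\mapsto y \bmod 2\OO$ onto $\FF_{2^n}$, whose Frobenius-fixed subfield is $\FF_2=\{0,1\}$. The paper packages the first step with an explicit set of coset representatives rather than your split short exact sequence, but the underlying computation and the resulting isomorphism are identical.
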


\begin{proof} Let $U_m\defeq (\OO/m)^\times$. 
\begin{enumerate}
    \item Fix a set of representatives $\mathcal{R}$ for $\OO/2$ in $\OO$. Let $\mathcal{R}^\times$ be a subset of $\mathcal{R}$ containing representatives for $(\OO/2)^\times$. Observe that $\{ x+2y: x \in\mathcal{R}^\times, y \in \mathcal{R} \}$ is a set of representatives for $U_4$ and $\#U_4 = 2^n(2^n-1)$. Therefore elements of $U_4^2$ are of the form $(x+2y)^2 \equiv x^2 \bmod 4\OO$ for $x \in\mathcal{R}^\times$ and $y \in \mathcal{R}$. Since $\#(\OO/2)^\times = 2^n -1$ is odd, the squaring map on $U_2=(\OO/2)^\times$ is surjective and so $\#U_4^2 = 2^n-1$. Therefore $\#\MK = \#U_4/\#U_4^2 = 2^n$. Since $\MK$ is formed by taking the quotient of $U_4$ modulo squares, $\MK$ is a direct product of cyclic groups of order $2$.
    
    For any $\alpha\in \OO$ coprime to $2$, write $[\alpha]$ as the projection of $\alpha\OO$ in $\MK$.
    Since every $x\in\mathcal{R}^\times$ is a square in $U_2$, we can write down the isomorphism explicitly as
    \begin{equation}\label{eq:M4iso}
\mathbf{M}_4\rightarrow \OO/2\cong \FF_{2^n}\qquad
[x+2y]=[1+2x^{-1}y]\mapsto x^{-1}y.
\end{equation}
 We see that $\mathbf{M}_4=\{[1+2y]:y\in \OO/2\}$.

    \item Let $\sigma$ be a generator of $\Gal(K/\QQ)$. The action of $\sigma$ on $[1+2y]\in\mathbf{M}_4$, simply maps $y$ to $y^{\sigma}$. Then we see that $y\equiv y^\sigma\bmod \OO/2$ if and only if $y\equiv 0$ or $1\bmod \OO/2$. These correspond to $\pm 1$ in $\MK$.\qedhere

\end{enumerate}
\end{proof}
 
 %%%%%%%%%%%%%%%%%

\begin{lem}\label{lem:Hilb.welldefn.M4}\switchAprint{lem:HilSymM4welldefn}
Let $K$ be a number field such that $2$ is inert in $K/\QQ$. The Hilbert symbol $(\parm, \parm)_2$ is well-defined on $\mathbf{M}_4$.
\end{lem}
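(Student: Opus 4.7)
The plan is to reduce to a purely local calculation at the unique prime $\vv$ of $K$ lying above $2$. Let $K_\vv$ be the completion, an unramified extension of $\QQ_2$ of degree $n$ with ring of integers $\OO_\vv$ (uniformized by $2$) and residue field $\FF_{2^n}$. Since $\OO/4\OO \cong \OO_\vv/4\OO_\vv$, every class in $(\OO/4\OO)^\times$ lifts to a unit of $\OO_\vv^\times$, and any two such lifts differ by an element of $1 + 4\OO_\vv$. By bilinearity of the Hilbert symbol $(\parm,\parm)_2$ at $\vv$, and using that squares in $(\OO/4\OO)^\times$ trivially lift to squares in $K_\vv^\times$, well-definedness on $\MK$ reduces to the claim that $(u,b)_2 = 1$ for all $u \in 1 + 4\OO_\vv$ and all $b \in \OO_\vv^\times$.

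Given $u = 1 + 4x$ with $x \in \OO_\vv$, I consider the polynomial $f(t) = t^2 + t - x \in \OO_\vv[t]$: if $\beta$ denotes a root, then $\sqrt{u} = 1 + 2\beta$, so $K_\vv(\sqrt{u}) = K_\vv(\beta)$ is the splitting field of $f$. The reduction $\bar f(t) = t^2 + t - \bar x \in \FF_{2^n}[t]$ is an Artin--Schreier polynomial, which splits over $\FF_{2^n}$ exactly when $\tr_{\FF_{2^n}/\FF_2}(\bar x) = 0$. In that case $\bar f'(t) \equiv 1$, so Hensel's lemma lifts a residue root to $\OO_\vv$; thus $\beta \in \OO_\vv$ and $u$ is a square in $K_\vv^\times$, forcing $(u,b)_2 = 1$ trivially. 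Otherwise $\bar f$ is irreducible over $\FF_{2^n}$, so the residue field of $K_\vv(\beta)$ contains $\FF_{2^n}[t]/(\bar f) \cong \FF_{2^{2n}}$; since $[K_\vv(\beta):K_\vv] = 2$, the residue degree is exactly $2$ and the extension $K_\vv(\sqrt u)/K_\vv$ is unramified.

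The finishing step uses the standard fact that in an unramified extension $L/K_\vv$ every unit of $K_\vv$ is a norm from $L$; consequently $(b, u)_2 = 1$ for every $b \in \OO_\vv^\times$ whenever $K_\vv(\sqrt u)/K_\vv$ is unramified. Combining this with the square case yields $(u,b)_2 = 1$ on $(1 + 4\OO_\vv) \times \OO_\vv^\times$, so by bilinearity the Hilbert symbol descends to a well-defined pairing on $\MK$. The only non-routine observation — and the step I expect to require the most care to state precisely — is the dichotomy that every element of $1 + 4\OO_\vv$ is either a square in $K_\vv^\times$ or a generator of the unique unramified quadratic extension of $K_\vv$; both alternatives automatically pair trivially with units at $\vv$.
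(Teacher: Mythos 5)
Your proposal is correct, but it takes a genuinely different route from the paper. The paper works directly with the quadratic-form definition of the Hilbert symbol: given a solution $(x,y,z)$ of $x^2-\alpha y^2\equiv\beta z^2\bmod 8$, it explicitly constructs a solution $(X,Y,Z)$ of $X^2-(\alpha+4B)Y^2\equiv\beta Z^2\bmod 8$ by perturbing $x$ and $z$ by carefully chosen multiples of $2$ (using that squaring is surjective on the odd-order group $(\OO/2)^\times$). Your argument instead passes to the completion $K_\vv$ and reformulates the claim as $(u,b)_\vv=1$ for all $u\in 1+4\OO_\vv$ and $b\in\OO_\vv^\times$, which you prove via the Artin--Schreier substitution $u=(1+2\beta)^2$ with $\beta^2+\beta=x$: either $\tr_{\FF_{2^n}/\FF_2}(\bar x)=0$ and Hensel's lemma makes $u$ a square, or $\tr(\bar x)=1$ and $K_\vv(\sqrt u)/K_\vv$ is the unramified quadratic extension, in which case every unit of $\OO_\vv$ is a norm. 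The paper's computation is more elementary and self-contained (it never leaves the definition of the symbol), while yours is more conceptual and yields the clean structural dichotomy you note at the end: every element of $1+4\OO_\vv$ is either a square or generates the unramified quadratic extension. One cosmetic remark: the phrase ``squares in $(\OO/4\OO)^\times$ trivially lift to squares in $K_\vv^\times$'' is loosely worded --- what you actually use is that a class $s^2$ admits \emph{some} square lift $s^2\in\OO_\vv^\times$, and any other lift differs by an element of $1+4\OO_\vv$, which pairs trivially once your main claim is established; worth stating precisely, but it does not affect the correctness of the argument.
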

\begin{proof}
We show that $(\alpha,\beta)_2=(\alpha+4B,\beta)_2$ for any $B\in\OO$ coprime to $2$, which implies that $(\parm, \parm)_2$ is well-defined on $(\OO/4\OO)^{\times}\times(\OO/4\OO)^{\times}$.
Suppose $B\in\OO$ is coprime to $2$.
It suffices to show that $(\alpha,\beta)_2=1$ implies $(\alpha+4B,\beta)_2=1$.
Take $x,y,z\in \OO$ not all divisible by $2$ satisfying
$x^2-\alpha y^2=\beta z^2\bmod 8$.
 Since $(\OO/2)^{\times}$ contains all its squares,
 there exists $C,D\in \OO$ such that
 $C^2\equiv \alpha^{-1}\beta B\bmod 2$ and 
 $D^2\equiv \alpha^{-1}\beta^{-1}B\bmod 2$.
Take $X=x+2Cz$, $y=Y$ and $Z=z+2Dx$, then one can check that $X^2-(\alpha+4B) Y^2\equiv\beta Z^2\bmod 8$.
\end{proof}

\begin{lem}\label{lem:Hilb.nondeg.M4}\switchAprint{lem:Hilb.nondeg.M4} Let $K$ be a number field such that $2$ is inert in $K/\QQ$.
The Hilbert symbol $(\parm,\parm)_2$ is non-degenerate on $\mathbf{M}_4$.
\end{lem}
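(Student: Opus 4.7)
The plan is to identify $\MK$ with the quotient of $H := \OO_{K_2}^\times/(\OO_{K_2}^\times)^2$ by the radical of the local Hilbert pairing, where $K_2$ denotes the completion of $K$ at the (inert) prime above $2$, so that non-degeneracy on $\MK$ is inherited from the standard non-degeneracy of the Hilbert pairing on $K_2^\times/(K_2^\times)^2$.

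First I would note that reduction modulo $4$ descends (through squares) to a surjection $\pi\colon H \twoheadrightarrow \MK$; here I use that $2$ being inert gives $(\OO_{K_2}/4)^\times = (\OO/4)^\times$. Since $K_2/\QQ_2$ is unramified of degree $n$, we have $\mu_4 \not\subset K_2$, and the principal units are a $\ZZ_2$-module of shape $\ZZ/2 \oplus \ZZ_2^n$. Hence $|H|=2^{n+1}$, which combined with $|\MK|=2^n$ from Proposition~\ref{prop:Mundy:ext}(1) gives $|\ker \pi|=2$.

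The key step is to identify the radical $R$ of the restricted pairing on $H$. Because the full local Hilbert pairing on $K_2^\times/(K_2^\times)^2$ is non-degenerate of $\FF_2$-dimension $n+2$, the orthogonal complement $H^\perp$ has dimension $1$. By local class field theory, $H^\perp$ is generated by the class corresponding to the unique non-trivial unramified quadratic extension of $K_2$. Using the Artin--Schreier description in residue characteristic $2$, this extension has the form $K_2(\sqrt{1-4c})$ for some $c \in \OO_{K_2}$ with $\bar c \notin \wp(k)$, where $k$ is the residue field and $\wp(x)=x^2+x$; in particular, the generator of $H^\perp$ lies in $H$, and $R = H \cap H^\perp = H^\perp$ has order~$2$.

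To conclude, I would observe that $1-4c \equiv 1 \pmod{4}$, so $R \subseteq \ker \pi$. Since both $R$ and $\ker \pi$ have order~$2$, they coincide, and thus the well-defined Hilbert pairing on $\MK = H/R$ (see Lemma~\ref{lem:Hilb.welldefn.M4}) is exactly the non-degenerate pairing induced on this quotient. The main obstacle is showing $H^\perp \subseteq H$, i.e., that the unique non-trivial unramified quadratic extension of $K_2$ is generated by the square root of a unit in $1 + 4\OO_{K_2}$; this relies on the Artin--Schreier presentation of unramified quadratic extensions of $2$-adic local fields.
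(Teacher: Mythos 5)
Your proof is correct, and it takes a genuinely different route from the paper's. The paper argues elementarily and constructively: given $\alpha$ coprime to $2$, it produces an explicit shift $\alpha \mapsto \alpha+4B$ so that $(\alpha+4B,2)_2=1$ (using that $(\OO/2)^\times$ has odd order to solve a quadratic congruence), and then combines this with Lemma~\ref{lem:Hilb.welldefn.M4} and the non-degeneracy of the full local pairing on $K_2^\times/(K_2^\times)^2$ to conclude that a class in the radical of $\MK$ must be trivial. Your argument instead identifies the radical structurally: the radical $R$ of $H=\OO_{K_2}^\times/(\OO_{K_2}^\times)^2$ is $H^\perp$, which local class field theory identifies with the class cutting out the unramified quadratic extension of $K_2$; the Artin--Schreier lift $K_2(\sqrt{1-4c})$ shows that this class is a $1$-unit modulo $4$, so lies in $\ker\pi$, and the two dimension counts ($|H|=2^{n+1}$ via the structure of principal units, $|\MK|=2^n$ via Proposition~\ref{prop:Mundy:ext}) force $R=\ker\pi$. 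Both proofs ultimately invoke the same input (non-degeneracy of the Hilbert symbol on $K_2^\times/(K_2^\times)^2$), but your version explains conceptually \emph{why} reduction modulo $4$ kills exactly the radical --- the unramified quadratic extension is generated by $\sqrt{1+4c}$ --- while the paper's version is shorter, self-contained at the level of congruences modulo $8$, and avoids appealing to the structure theory of local units or Artin--Schreier theory. One small remark: your dimension count for $H$ reproves what you need independently, whereas the paper leans on Lemma~\ref{lem:Hilb.welldefn.M4} more essentially; in either approach the only nontrivial background fact is \cite[Chapter XIV, Proposition 7]{SerreLF}.
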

\begin{proof}
Fix some $\alpha\in \OO$ coprime to $2$.
We claim that $(\alpha+4B,2)_2=1$ for some $B\in\OO$.
Since $(\OO/2)^\times$ contains all its squareroots, there exist some $\gamma, z\in \OO$ such that $\alpha\equiv\gamma^2-2z^2\bmod 4$.
Write $x=\gamma+2x'$ for some $x'\in\OO$, set $B=x'\gamma+x'^2$ and $y=1$.
Then $x^2-(\alpha+4B)y^2\equiv 2z^2\bmod 8$. This proves our claim.

Now suppose $(\alpha,\beta)_2=1$ for all $\beta\in\OO$ coprime to $2$. Then taking $B$ from the above claim, $(\alpha+4B,\beta)_2=1$ holds for all $\beta\in\OO$ coprime to $2$ by Lemma~\ref{lem:Hilb.welldefn.M4}, and for all $\beta\in\OO$ divisible by $2$, by the above claim.
Since the Hilbert symbol is non-degenerate on $K_2/K_2^\times$ \cite[Chapter XIV, Proposition 7]{SerreLF}, this implies that $\alpha+4B\in\OO^2$. Hence $[\alpha]=[\alpha+4B]$ is trivial in $\mathbf{M}_4$.
\end{proof}

%%%%%%%%%%%%%%%%%%%%%%%%%%%%%%%%%%%%%%%%%%

%Recall that we say a modulus of $K$ is narrow whenever $\infty$ divides the modulus where $\infty$ is the product of all infinite places of $K$. Letting $\mmm$ denote a narrow modulus with finite part $\mmm_0$, let $J_K^\mmm=J_K^{\mmm_0}$ denote the group of fractional ideals of $K$ prime to $\mmm_0$ and let $P_K^\mmm=P_K^{\mmm_0}$ denote the subgroup of $J_K^\mmm$ formed by the principal ideals with generator $\alpha\in K^\times$ such that $\ord_2(q)\leq\ord_2(\alpha)$ and ${\alpha}$ is totally positive.
For $\mmm$ an ideal of $K$, let $\PP_K^\mmm$ denote the set of prime ideals of $\OO$ co-prime to $\mmm$. 
For $K$ a totally real number field satisfying (C3),
%totally real with odd class number $h$ such that $\OO^{\times}_{+} = \left(\OO^{\times}\right)^2$, 
we can define the following map.

\begin{defn}\label{defn:r} \switchAprint{[defn:r]}
    For $q$ a power of $2$, define the map
\begin{align*}
\mathbf{r}_q: & 
\PP_K^{2}\to  \mathbf{M}_q \\
 & \pp \mapsto \alpha
\end{align*}
where $\alpha\in\OO$ is a totally positive generator of the principal ideal $\pp^{\text{$h$}}$. 
\end{defn}

%The map $\mathbf{r}_q$ is well-defined out of $\PP_K^{2}$. 

By Lemma \ref{RCFh}\switchAprint{RCFh} since $K$ is totally real with odd narrow class number, all principal ideals have a totally positive generator and $\OO^{\times}_{+} = \left(\OO^{\times}\right)^2$. Since squares are trivial in $\mathbf{M}_4$ by definition %and $\OO^{\times}_{+} = \left(\OO^{\times}\right)^2$,
the map $\mathbf{r}_q$ is well-defined.
Note that $\mathbf{r}_q$ commutes with the Galois action, i.e. $\mathbf{r}_q(\pp^\sigma) = \mathbf{r}_q(\pp)^\sigma$ for all $\sigma\in\Gal(K/\QQ)$.

For $\mmm$ an ideal of $\OO$, let $J_K^\mmm$ denote the group of fractional ideals of $K$ prime to $\mmm$. %The map $\mathbf{r}_q$ induces a homomorphism $J_K^2 \to \mathbf{M}_4$.

\begin{lem}[{\cite[Lemma~3.5]{APSR}}]\label{lem:homMK}\switchAprint{[lem:homMK]}
For $K$ totally real with $h^+$ odd, the homomorphism $J_K^2 \to \mathbf{M}_4$ induced by $\mathbf{r}_q$ induces a %canonical 
surjective homomorphism, %from the narrow ray class group over $K$ of conductor $4$ to $\mathbf{M}_4$,
\[
\varphi_q: \CL^+_{q} \to  \mathbf{M}_q.
\]
 %with 
%\[\#\ker(\varphi) = \frac{h(2^n-1)}{(U^2:U_{\mmm,1})}\]
%where $\mmm$ denotes the narrow modulus of finite part q and 
%\[
%U_{\mmm,1}:=\{u \in U: \ord_2(u-1)\geq \ord_2(q), \totpos{u}\}.
%\]
\end{lem}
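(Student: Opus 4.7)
The plan is to extend $\mathbf{r}_q$ multiplicatively to a homomorphism $J_K^2 \to \mathbf{M}_q$, verify it vanishes on the subgroup defining $\CL^+_q$, and then directly exhibit preimages to establish surjectivity. All three steps lean on Lemma~\ref{RCFh}: it guarantees that $\aaa^h$ (which is principal since $h = |\CL|$) admits a totally positive generator, and that any two such generators differ by a totally positive unit, which is then a square, so the resulting class in $\mathbf{M}_q$ is unambiguous.

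For any $\aaa \in J_K^2$ I set $\mathbf{r}_q(\aaa) := [\alpha] \in \mathbf{M}_q$ with $\alpha$ any totally positive generator of $\aaa^h$; this agrees with Definition~\ref{defn:r} on prime ideals and is multiplicative, since a product of totally positive generators of $\aaa^h$ and $\bb^h$ is a totally positive generator of $(\aaa\bb)^h$. To show the induced map factors through $\CL^+_q$, it suffices to check triviality on principal ideals $(\gamma)$ with $\gamma \succ 0$ and $\gamma \equiv 1 \bmod q\OO$; for such $\gamma$, the element $\gamma^h$ is a totally positive generator of $(\gamma)^h$ still satisfying $\gamma^h \equiv 1 \bmod q\OO$, so $\mathbf{r}_q((\gamma)) = [\gamma^h]$ is trivial in $\mathbf{M}_q$.

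For surjectivity, given $m \in \mathbf{M}_q$ I lift it to an element $\alpha \in \OO$ coprime to $2$ and then replace $\alpha$ by $\alpha' := \alpha + Nq$ for a large positive rational integer $N$. Since $q = 2^k \in \ZZ_{>0}$ is itself totally positive, for $N$ large enough $\alpha'$ is totally positive; it lies in $\OO$ and is congruent to $\alpha$ modulo $q\OO$, hence still represents the class $m$ and is still coprime to $2$. Because $\alpha' \succ 0$, the element $(\alpha')^h$ is a totally positive generator of the principal ideal $(\alpha')^h$, and since $h$ is odd while $\mathbf{M}_q$ has exponent two, I get $\varphi_q([(\alpha')]) = [(\alpha')^h] = [\alpha']^h = [\alpha'] = m$. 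I do not expect any serious obstacle here: once Lemma~\ref{RCFh} is invoked, the argument reduces to the explicit lifting $\alpha \mapsto \alpha + Nq$ together with the trivial observation that $\mathbf{M}_q$ is $2$-torsion while $h$ is odd.
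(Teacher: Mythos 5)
Your proof is correct, and it takes a genuinely different route from the paper's. The paper's argument for surjectivity invokes the class field theory exact sequence and canonical isomorphism of \cite[Theorem~V.1.7]{MilneCFT}: given $X\in\mathbf{M}_q$, one produces $(1,X)\in(\pm)^n\times\mathbf{M}_q$, pulls it back through the isomorphism $i$ to a totally positive $\beta\in K^\times$, writes $\beta=a/b$ with $a,b\in\OO$ totally positive, and then checks $\varphi_q\bigl((a)(b)^{-1}\bigr)=[ab^{-1}]=X$. You instead avoid the CFT machinery entirely: your key move is the explicit lifting $\alpha\mapsto\alpha'=\alpha+Nq$ with $N\gg 0$, which is totally positive because $q$ is a positive rational integer, still represents the target class because $\alpha'\equiv\alpha\bmod q\OO$, and then the observation that $h$ is odd while $\mathbf{M}_q$ has exponent $2$ gives $\mathbf{r}_q\bigl((\alpha')\bigr)=[(\alpha')^h]=[\alpha']$. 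That is a cleaner and more self-contained argument; what the paper's route buys is that it sits inside the general structural description of narrow ray class groups that the rest of Section~4 uses repeatedly (e.g.\ in Lemma~\ref{equidistribution:extpm}), so the exact sequence is already on the table.

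One small point to tighten in the well-definedness step: you reduce to checking triviality on principal ideals $(\gamma)$ with $\gamma\in\OO$, $\gamma\succ 0$ and $\gamma\equiv 1\bmod q\OO$, but the subgroup $P_{K,q\infty}$ defining $\CL^+_q$ consists of $(\gamma)$ for $\gamma\in K^\times$ with $\gamma\succ 0$ and $\gamma\equiv^{\times}1\bmod q$, which need not be integral. The reduction is valid — by weak approximation one can write any such $\gamma$ as a ratio $\gamma_1/\gamma_2$ with $\gamma_i\in\OO$ integral, totally positive, and $\equiv 1\bmod q\OO$ — but it should be stated, or you can sidestep it by observing directly that for any $\gamma\in K^\times_{\mathfrak{m},1}$ the element $\gamma^h$ is a $2$-unit reducing to $1$ in $(\OO/q)^\times$, hence $[\gamma^h]$ is trivial, with no need to pass to integral representatives.
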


\begin{proof}

We expand on the proof of \cite[3.5]{APSR}. There the result is stated with more assumptions, but the same proof holds more generally. 

The proof from \cite[3.5]{APSR} shows that $\varphi_q$ is well-defined. We elaborate on the proof of surjectivity provided there.
Let 
\begin{align*}
&K_\mm:=\{a\in K^\times: \ord_2(a)=0\}, \quad \text{and}\\
&K_{\mm,1}:=\{a \in K^\times: \ord_2(a-1)\geq \ord_2(q), \totpos{a}\}
\end{align*}
 
 We have the following commutative diagram of homomorphisms. The homomorphism $\psi_0$ and the isomorphism $i$ are induced by the exact sequence and canonical isomorphism from class field theory as given in \cite[V.1.7]{MilneCFT}.

\begin{center}
\begin{tikzpicture}
\diagram (m)
{ (K_\mm/K_{\mm,1})/(K_\mm/K_{\mm,1})^2 & \CL_{q}^+/(\CL_{q}^+)^2 & \mathbf{M}_q  \\
 (\pm)^n \times  \mathbf{M}_q & & \\};
\path [->] 
    (m-1-1) edge node [above] {$\psi_0$} (m-1-2)
    (m-1-2) edge node [above] {$\varphi_q$} (m-1-3)
    (m-1-1) edge node [left] {$i$} (m-2-1)
    (m-2-1) edge node [below] {$\psi$} (m-1-2)
    ;
\end{tikzpicture}
\end{center}

Fix $X\in \mathbf{M}_q$. Consider $(1,X)\in (\pm)^n\times \mathbf{M}_q$. Since $i$ is an isomorphism, there exists and element in $(K_\mm/K_{\mm,1})/(K_\mm/K_{\mm,1})^2$ represented by $\beta\in K^\times$ such that $i(\beta)=(1,X)$. Since $i(\beta)$ maps to $1$ in the projection to $(\pm)^n$, $\beta$ is totally positive. Since $\totpos{\beta}$, we can  choose $a,b\in \OO$ totally positive such that $\beta=a/b$. (Writing $\beta=a/b$ for any $a,b\in \OO$, one could then consider $\beta=a^2/ab$). Then $X=[ab\inv]$ by the canonical isomorphism in \cite[V.1.7]{MilneCFT}.

The map $\psi_0$ takes $\beta$ to the class represented by the fractional ideal $(a)(b)\inv$. Since $a$ and $b$ are totally positive, $\varphi_q((a)(b)\inv)=[ab\inv]=X$ and so $\varphi_q$ is surjective.
%More generally, as stated in \cite[3.5]{APSR}, the analogous map $\varphi_q: \CL_q^+ \to \mathbf{M}_q$ is surjective for $q$ any power of $2$ by a similar argument.
\end{proof}
%Lemma \ref{lem:homMK}\switchAprint{lem:homMK} will be used in the proof of Lemma \ref{equidistribution:extpm}\switchAprint{equidistribution:extpm}.

\begin{lem}[{\cite[Lemma~4.3]{APSR}}]\label{equidistribution}\switchAprint{[equidistribution]} Assume $K$ satisfies (P1), (C3), and (C4).

%(P1) and (C3).
%Here $K$ is a totally real number field with odd class number such that $\OO^\times_+=(\OO^\times)^2$.
%ALSO needs to be Galois
\begin{enumerate}
    \item\label{equidistribution:allp}  For any $\alpha\in \mathbf{M}_4$, the density of primes $\pp$ of $K$ such that $\varphi_4(\pp)=\alpha$ is $\frac{1}{2^n}$. That is, 
    \[
    d(\mathbf{r}_4\inv(\alpha)) = \frac{1}{\#\mathbf{M}_4} = \frac{1}{2^n}.
    \]
    \item\label{equidistribution:spcomp}  Furthermore, the density does not change when we restrict to primes of $K$ that split completely in $K/\QQ$. That is, %if $S'$ is the set of primes $\pp\in\PP_K^{2\ell}$ with \gls{inertia degree} $f_{K/\QQ}(\pp)=1$ in $K/\QQ$, then
    \[
    d(\mathbf{r}_4\inv(\alpha) \cap S' | S') = \frac{1}{\#\mathbf{M}_4} = \frac{1}{2^n}.
    \]
\end{enumerate}
\end{lem}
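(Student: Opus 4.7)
The plan is to apply the Chebotarev Density Theorem to an abelian extension of $K$ constructed from $\varphi_4$ via class field theory.

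First I would identify the relevant extension. By Lemma~\ref{lem:homMK}, $\varphi_4 \colon \CL^+_{4} \to \mathbf{M}_4$ is a surjective homomorphism, and Artin reciprocity identifies $\CL^+_4$ with the Galois group of the narrow ray class field $R_4^+/K$ of conductor $4\infty$. Let $L \subseteq R_4^+$ be the subfield fixed by $\ker(\varphi_4)$, so that
\[
\Gal(L/K) \;\cong\; \CL^+_4 / \ker(\varphi_4) \;\cong\; \mathbf{M}_4.
\]
Under this identification, for every prime $\pp$ coprime to $2$ the Frobenius $\Frob_\pp \in \Gal(L/K)$ corresponds to $\mathbf{r}_4(\pp) \in \mathbf{M}_4$, and $\pp$ is unramified in $L/K$ because the conductor of $L/K$ divides $4\infty$.

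For part~(1), I would apply Chebotarev directly to the abelian extension $L/K$: since every element of $\Gal(L/K)$ is its own conjugacy class, the natural density of primes $\pp \in \PP_K^2$ with $\Frob_\pp = \alpha$ is $1/|\Gal(L/K)| = 1/2^n$, giving $d(\mathbf{r}_4^{-1}(\alpha)) = 1/2^n$.

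For part~(2), I would reduce to part~(1) by noting that the condition of lying above a rationally split prime excludes only a set of density zero from $\PP_K^2$. Since $K/\QQ$ is Galois, any prime $\pp \in \PP_K^2 \setminus S'$ has residue degree at least $2$ over $\QQ$, so $\Norm(\pp) \geq p^2$ for the rational prime $p$ below. The standard prime-counting estimate yields
\[
\#(\PP_K^2 \setminus S')|_N \;=\; O\!\left(\frac{\sqrt{N}}{\log N}\right),
\]
while $\#\PP_K^2|_N \sim N/\log N$, whence $d(S' \mid \PP_K^2) = 1$. Combining with part~(1) gives $d(\mathbf{r}_4^{-1}(\alpha) \cap S') = 1/2^n$, and hence $d(\mathbf{r}_4^{-1}(\alpha) \cap S' \mid S') = 1/2^n$.

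I expect no genuine obstacle: the main point to verify carefully is that the Artin symbol for $L/K$ matches $\varphi_4$ on prime ideal classes, which is immediate from the construction of $L$ as the fixed field of $\ker(\varphi_4)$ under Artin reciprocity. Everything else is Chebotarev plus the standard fact that higher-degree primes contribute negligibly to density.
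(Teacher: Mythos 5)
Your proposal is correct and matches the structural approach the paper takes (the paper simply cites \cite[Lemma~4.3]{APSR}, and the analogous argument in the paper's own Lemma~\ref{equidistribution:extpm} is precisely the construction you describe: apply the Artin map to $\ker(\varphi_4)$, take the fixed field $L$ inside $R_4^+$, identify $\Gal(L/K)\cong\mathbf{M}_4$, and invoke Chebotarev). One small remark on part~(2): your shortcut --- that $S'$ has full density in $\PP_K^2$ because primes of residue degree at least $2$ contribute only $O(\sqrt{N})$ --- is clean and correct for this unsigned statement, but note that it does not adapt to the signed refinement in Lemma~\ref{equidistribution:extpm}, where $S'_\pm$ has density $1/2$ rather than $1$; there the paper must run Chebotarev in the compositum over $\QQ$ and track Frobenius in both $L/\QQ$ and $K(\zeta_4)/\QQ$ via an orbit--stabilizer computation. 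So your argument is the right simplification for this particular lemma, just be aware it is not the argument one would reuse for the signed version.
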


\begin{proof}
See Lemma 4.3 in \cite{APSR}. There the result is stated with more assumptions, but the same proof holds more generally. 
\end{proof}

\begin{defn}\label{defn:NMG}\switchAprint{defn:NMG}
Assume $K$ satisfies (P1), (C3), and (C4).
%Let $K$ be a totally real number field with odd class number such that every totally positive unit is a square and $2$ is inert in $K/\QQ$.  
%ALSO needs to be Galois.
Let $\alpha\in\MK$. Let $\pp$ be an odd prime of $K$ such that $\mathbf{r}_4(\pp)=\alpha$. The map 
\begin{align*}
\mathbf{N}: \MK &\to (\ZZ/4)^\times \\
  \alpha &\mapsto \Norm_{K/\QQ}(\pp) \bmod 4\ZZ
\end{align*}
is well-defined and $\mathbf{N}(\alpha)=\mathbf{N}(\alpha^\sigma)$ for all $\sigma\in\Gal(K/\QQ)$. 
%\begin{align*}
%\mathbf{N}: \MKG &\to (\ZZ/4)^\times \\
% X &\mapsto p \bmod 4\ZZ
%\end{align*}
%is well-defined.
\end{defn}

\begin{proof}
 By  Lemma \ref{equidistribution}\switchAprint{equidistribution}, for any $\alpha\in \MK$, there exists a prime $\pp\in\PP_K^{2}$ such that $\mathbf{r}_4(\pp)=\alpha$. 
 
 Let $\pp$ and $\qq$ be (odd) primes of $K$ such that $\mathbf{r}_4(\pp)=\mathbf{r}_4(\qq)$. Let $\alpha$ be a totally positive generator of $\pp^h$ and let $\beta$ be a totally positive generator of $\qq^h$, where $h$ is the (odd) class number of $K$. Since $\mathbf{r}_4(\pp)=\mathbf{r}_4(\qq)$, $\alpha \equiv \beta$ in $\mathbf{M}_4$. Then $\alpha \equiv \beta \gamma^2\bmod 4\OO$ for some $\gamma\in \OO$. Since $2$ is inert, $\alpha^\sigma \equiv \beta^\sigma (\gamma^\sigma)^2\bmod 4\OO$  for all $\sigma\in\Gal(K/\QQ)$. Therefore $\Norm(\alpha) \equiv \Norm(\beta) \Norm(\gamma)^2 \bmod 4\OO$. Since the norms are in $\ZZ$, $\Norm(\alpha) \equiv \Norm(\beta)\bmod 4\ZZ$. 
 \end{proof}

We now state an extended version of Lemma \ref{equidistribution}\switchAprint{equidistribution} that handles the densities restricted to primes of a fixed congruence class modulo $4\ZZ$. %Recall that $\PP_K^{2\ell}$ denotes the set of primes of $K$ that are co-prime to $2\ell$ where $\ell$ is the conductor of $K$.

\begin{lem}\label{equidistribution:extpm}\switchAprint{[equidistribution:extpm]} 
Assume $K$ satisfies conditions (C1)-(C4).
%Let $K$ be a totally real number field of odd degree with odd class number such that every totally positive unit is a square unit and $2$ is inert in $K/\QQ$. 
%ALSO needs to be Galois.
    For a fixed sign $\pm$, let $S_{\pm}'$ denote the set of primes of $K$ laying above some $p\in S$ such that $p\equiv \pm 1 \bmod 4\ZZ$.
    For any $\alpha\in \mathbf{M}_4$, the density of $\pp\in S_{\pm}'$ such that $\varphi_4(\pp)=\alpha$ is given by
    \[
    d(\mathbf{r}_4\inv(\alpha) \cap S_{\pm}'|S_{\pm}') = \left\{
    \begin{array}{ll}
        \frac{1}{2^{n-1}} &  \text{if } \mathbf{N}(\alpha) = \pm 1 \bmod 4\\
        0 & \text{otherwise.}
    \end{array}
    \right.
    \]
%\end{enumerate}
\end{lem}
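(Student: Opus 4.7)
The strategy is to reduce the statement to Lemma~\ref{equidistribution}(ii) by conditioning on the mod-$4$ congruence of the rational prime below. The first observation is that the value $\mathbf{r}_4(\pp) = \alpha$ already forces a specific congruence of the rational prime $p$ under $\pp$, provided $\pp \in S'$. Indeed, for $\pp \in S'$ we have $\Norm_{K/\QQ}(\pp) = p$, and Definition~\ref{defn:NMG} gives $\Norm_{K/\QQ}(\pp) \equiv \mathbf{N}(\alpha) \bmod 4\ZZ$. Consequently, if $\mathbf{N}(\alpha) \neq \pm 1 \bmod 4$, the set $\mathbf{r}_4^{-1}(\alpha)\cap S'_{\pm}$ is empty and the asserted density is zero.

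Assuming now that $\mathbf{N}(\alpha) = \pm 1 \bmod 4$, the same observation yields the set equality $\mathbf{r}_4^{-1}(\alpha)\cap S'_{\pm} = \mathbf{r}_4^{-1}(\alpha)\cap S'$, since any $\pp \in \mathbf{r}_4^{-1}(\alpha)\cap S'$ automatically lies above a rational prime $\equiv \pm 1 \bmod 4$. I would then use the standard multiplicativity of restricted densities along the chain $\mathbf{r}_4^{-1}(\alpha)\cap S'_\pm \subseteq S'_\pm \subseteq S'$, namely
\[
d\bigl(\mathbf{r}_4^{-1}(\alpha)\cap S'_\pm \, \big| \, S'_\pm\bigr) = \frac{d\bigl(\mathbf{r}_4^{-1}(\alpha)\cap S' \, \big| \, S'\bigr)}{d(S'_\pm|S')},
\]
valid provided both factors on the right exist.

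The numerator equals $1/2^n$ by Lemma~\ref{equidistribution}(ii). For the denominator, I would apply the Chebotarev Density Theorem to the compositum $K(i)/\QQ$. Since $n=[K:\QQ]$ is odd by (C2), we have $K \cap \QQ(i) = \QQ$, so $\Gal(K(i)/\QQ) \cong \Gal(K/\QQ) \times \Gal(\QQ(i)/\QQ)$. The conditions ``$p$ splits completely in $K$'' and ``$p \equiv \pm 1 \bmod 4$'' correspond to independent coordinates of the Frobenius at $p$, so Chebotarev gives $d(S_\pm) = \frac{1}{2n}$ and $d(S) = \frac{1}{n}$, hence $d(S_\pm|S) = 1/2$. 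Since every $p \in S$ has exactly $n$ primes of $K$ above it, this immediately lifts to $d(S'_\pm|S') = 1/2$. Substituting yields $\frac{1/2^n}{1/2} = \frac{1}{2^{n-1}}$, as required.

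The proof is essentially a bookkeeping exercise: the substantive content is imported from Lemma~\ref{equidistribution}(ii), and the only additional genuine (but standard) input is the Chebotarev calculation for $K(i)/\QQ$, where the odd degree hypothesis (C2) is essential for the two Galois conditions to decouple.
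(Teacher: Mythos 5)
Your argument is correct and the conclusion is right, but it takes a genuinely shorter route than the paper's proof. You and the paper share the essential observation that, for $\pp\in S'$, the value $\mathbf{r}_4(\pp)=\alpha$ already determines $p\bmod 4$ via $\mathbf{N}$ (Definition~\ref{defn:NMG}); in the paper this appears as the statement that the image $\bar{\sigma}$ of the corresponding Frobenius in $\Gal(F/K)$ equals $\tau$ precisely when $\mathbf{N}(\alpha)=\pm 1\bmod 4$. After that, however, the paths diverge. The paper does not invoke Lemma~\ref{equidistribution} at all: it builds the extension $L/K$ with $\Gal(L/K)\cong\mathbf{M}_4$, shows $F=K(\zeta_4)\subseteq L$ with $[L:F]=2^{n-1}$, translates everything into Frobenius sets $\preArt{\cdot}{\cdot}$, and then runs a direct Chebotarev computation with the Orbit--Stabilizer Theorem to convert between counts of rational primes and counts of primes of $K$. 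Your proof instead uses the multiplicativity of restricted densities along the chain $\mathbf{r}_4^{-1}(\alpha)\cap S'_\pm\subseteq S'_\pm\subseteq S'$, pulls in the numerator $d(\mathbf{r}_4^{-1}(\alpha)\cap S'\,|\,S')=1/2^n$ as a black box from Lemma~\ref{equidistribution}(ii), and computes the denominator $d(S'_\pm|S')=1/2$ by the elementary Chebotarev argument on $K(i)/\QQ$ (which uses (C2) exactly as you say, to decouple the two Frobenius coordinates). The step converting from rational primes to primes of $K$ is handled, correctly, by the uniform multiplicity $n$ in the split-completely locus, which sidesteps the paper's orbit--stabilizer bookkeeping. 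Your version is more modular and arguably cleaner, at the cost of needing the prior lemma; the paper's is self-contained but re-derives the machinery. Both are valid.
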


\begin{proof}
%\todo{To Do: Re-write this proof. This proof does not show that the limit defining the density in question exists.}
By Lemma \ref{lem:homMK}\switchAprint{lem:homMK}, the map $\mathbf{r}_4: \PP_K^{2} \to \MK$ from Definition \ref{defn:r}\switchAprint{defn:r} induces a surjective %canonical
group homomorphism
\[
\varphi_4: \CL_4^+ \surj \MK.
\]
and this homomorphism commutes with the action from $\Gal(K/\QQ)$. 
Define $H\defeq  \Art(\ker(\varphi_4))$ where $\Art$ denotes the Artin map.

%so that the following diagram of exact sequences commutes and commutes with the Galois action from $\Gal(K/\QQ)$. 
%\begin{center}
%\begin{tikzpicture}
%\diagram (m)
%{ 1 & \ker(\varphi_4) & C^+_{4} & \mathbf{M}_4 & 1  \\
%  1 & H &  \Gal(R^+_{4}/K) &  \mathbf{M}_4 & 1 \\};
%\path [->] 
	   %top
%           (m-1-1) edge node [above] {} (m-1-2)
%           (m-1-2) edge node [above] {} (m-1-3)
%           (m-1-3) edge node [above] {$\varphi_4$} (m-1-4)
%           (m-1-4) edge node [above] {} (m-1-5)
%           %bottom
%           (m-2-1) edge node [above] {} (m-2-2)
%           (m-2-2) edge node [above] {} (m-2-3)
%           (m-2-3) edge node [above] {} (m-2-4)
%           (m-2-4) edge node [above] {} (m-2-5)
%           ;
%\path [->] 
%           %vertical
%           (m-1-2) edge node [right] {$\Art$} (m-2-2)
%           (m-1-3) edge node [right] {$\Art$} (m-2-3)
%           (m-1-4) edge node [right] {\text{identity}} %(m-2-4)
%          ;
%\end{tikzpicture}
%\end{center}
Let $R_4^+$ denote the narrow ray class field over $K$ of conductor $4$.
Define $L$ to be the fixed field of $H$ 
in $\Gal(R_4^+/K)$. Then $\varphi_4$ induces a canonical isomorphism 
\[
\Gal(L/K)\ism \MK,
\]
which commutes with the action from $\Gal(K/\QQ)$.

Let $K_{4,1}\defeq\{\alpha\in K^\times: \ord_2(\alpha-1)\geq 2, \alpha\succ 0\}$ and $\OO^\times_{4,1} \defeq K_{4,1} \cap \OO^\times$. Since $\OO^\times_+=(\OO^\times)^2$,
    \[
    (\OO^\times:\OO^\times_{4,1}) = (\OO^\times:\OO^\times_+)(\OO^\times_+:\OO^\times_{4,1}) = 2^n((\OO^\times)^2:\OO^\times_{4,1}).
    \]
Therefore by  \cite[V.1.7]{MilneCFT}, the order of $\Gal(R_4^+/K)$ divides $h2^n(2^n-1)$.

We know $[L:K]=2^n$ since $\Gal(L/K)\ism \mathbf{M}_4$ and $\#\MK=2^n$ by Proposition \ref{prop:Mundy:ext}\switchAprint{prop:Mundy:ext}. Therefore $[R_4^+:L]$ is odd. Let $F$ denote the composite of $K$ and $\QQ(\zeta_4)$. Since $[K:\QQ]$ is odd, $[F:K]=2$. Since $F \subseteq R_4^+$ and $[R_4^+:L]$ is odd, $F\subseteq L$ and $[L:F]=2^{n-1}$. %so we have the following tower of number fields of the following degrees.

%\begin{center}
%\begin{tikzpicture}
%\diagram (m)
%{ L \\
%   F \\
%   K \\
%   \QQ \\};
%\path [-] 
% 	(m-1-1) edge node [right]{$2^{n-1}$} (m-2-1)
%	(m-2-1) edge node [right]{$2$} (m-3-1)
%	(m-3-1) edge node [right]{$n$} (m-4-1)
%	;
%\end{tikzpicture}
%\end{center}

For $T/E$ a Galois extension of conductor dividing $\mmm$, let $p$ be a prime of $E$, and let $\tau\in\Gal(T/E)$. Let $(p,T/E)$ denote the conjugacy class of $\Gal(T/E)$ containing the Frobenius of $\pp$ where $\pp$ is a prime of $T$ above $p$. Let
\begin{align*}
\preArt{T|E}{E}(\tau)&\defeq \{ p\in\PP_E^{\mmm}: (p,T/E) = \left<\tau\right>\},\\
\preArt{T|E}{T}(\tau)&\defeq \{ \pp\in\PP_T^{\mmm}: \pp \text{ lies above }p\in\preArt{T|E}{E}(\tau) \}.
%\Art_{F|E}(\pp) = \tau\},
\end{align*}

Let $\alpha\in\MK$ and let $\sigma\in\Gal(L/K)$ corresponding to $\alpha$ via the isomorphism induced by $\varphi_4$. 
%{\color{blue} Note that $\Gal(L/K)\trianglelefteq \Gal(L/\QQ)$.} %We will consider $\sigma$ inside the larger Galois group, $\Gal(L/\QQ)$ such that $\sigma$ fixes $K$. %and we will use the notation $\sigma_0$ when we are considering $\sigma$ inside $\Gal(L/K)$.

Fix a sign $\pm$ and let $\tau_0\in\Gal(\QQ(\zeta_4)/\QQ)$ such that 
\[
\preArt{\QQ(\zeta_4)/\QQ}{\QQ}(\tau_0) = \{ p \in \PP_\QQ^{2}: p \equiv \pm 1 \bmod 4\ZZ\}.
\]
Since $n=[K:\QQ]$ is odd,
$\Gal(F/K) \ism \Gal(\QQ(\zeta_4)/\QQ)$. Let $\tau\in\Gal(F/K)$ corresponding to $\tau_0$ in $\Gal(\QQ(\zeta_4)/\QQ)$.
%{ \color{blue} Note that $\Gal(F/K)\trianglelefteq\Gal(F/\QQ)$.} 
%We use the notation $\tau'$ to refer to $\tau$ considered inside the larger group, $\Gal(F/\QQ)$ such that $\tau'$ fixes $K$.

Recalling that $\varphi_4$ is induced by $\mathbf{r}_4$, observe that 
\[
    \mathbf{r}_4\inv(\alpha) = \preArt{L/K}{K}(\sigma), \quad 
    S' = \preArt{K/\QQ}{K}(1), \quad \text{and} \quad
    S_\pm' = \preArt{F/K}{K}(\tau) \cap S'.
\]
Then the density in question is
\[
d_\pm\defeq  d\left( \frac{\mathbf{r}_4\inv(\alpha) \cap S_\pm' }{S_\pm'} \right)
= d \left( \frac{\preArt{K/\QQ}{K}(1) \cap \preArt{L/K}{K}(\sigma) \cap \preArt{F/K}{K}(\tau)}{\preArt{K/\QQ}{K}(1) \cap \preArt{F/K}{K}(\tau)} \right)
\]

Consider $\bar{\sigma}\in \Gal(F/K)$ taken to be the image of $\sigma$ under the natural surjection,
\[
\Gal(L/K) \twoheadrightarrow \Gal(F/K).
\]
Note that $\bar{\sigma} = \tau$ exactly when $\mathbf{N}(\alpha) = \pm 1 \bmod 4$. 
If $\bar{\sigma} \neq \tau$ then 
$
\mathbf{r}_4\inv(\alpha) \cap S_\pm' = \emptyset
$
so the density in question is 0. We now assume $\mathbf{N}(\alpha) = \pm 1 \bmod 4$ so that $\bar{\sigma} = \tau$. Then 
\[
\preArt{L/K}{K}(\sigma) \cap \preArt{F/K}{K}(\tau) = \preArt{L/K}{K}(\sigma).
\]
Therefore
\[
d_\pm = d \left( \frac{\preArt{K/\QQ}{K}(1) \cap \preArt{L/K}{K}(\sigma) }{\preArt{K/\QQ}{K}(1) \cap \preArt{F/K}{K}(\bar{\sigma})} \right).
\]
Restricting to primes of norm over $\QQ$ less than $N$, there are surjective maps of the following indices
\[
\preArt{K/\QQ}{K}(1) \cap \preArt{L/K}{K}(\sigma) |_N \rightarrow \preArt{L/\QQ}{\QQ}(\sigma)|_N \quad \text{ with index = } \#\Stab(\sigma)
\]
and
\[
\preArt{K/\QQ}{K}(1) \cap \preArt{F/K}{K}(\bar{\sigma}) |_N \rightarrow \preArt{F/\QQ}{\QQ}(\bar{\sigma})|_N \quad \text{ with index = } \#\Stab(\bar{\sigma})
\]
where $\Stab(\sigma)$ and $\Stab(\bar{\sigma})$ denote the stabalizers of $\sigma$ and $\bar{\sigma}$ respectively under the action from $\Gal(K/\QQ)$. 

%{\color{blue} Why are $L/\QQ$ and $F/\QQ$ Galois? Is it due to the action of $\Gal(K/\QQ)$ on $\Gal(L/K)$?} - Yes. This action fixes the conductor so $R_4^+$ is Galois over $\QQ$. The maximal odd subgroup of $C_4^+$ is unique so the Galois closure of L/Q is L. F=K(i) and the conjugate of i is just -i, so F/Q is Galois.

Then by the Chebotarev Density Theorem (see Theorem  \cite[VII.13.4]{NeukirchANT} for Dirichlet density or \cite{Chebotarev} for natural density) and by the Orbit-Stabilizer Theorem,
\begin{align*}
d_\pm &= \frac{\#\Stab(\sigma)}{\#\Stab(\bar{\sigma})}
d \left(\frac{
\preArt{L/\QQ}{\QQ}(\sigma)
}{
\preArt{F/\QQ}{\QQ}(\bar{\sigma})
}
\right) \\
& = \frac{\#\Stab(\sigma) d(\preArt{L/\QQ}{\QQ}(\sigma)) }{\#\Stab(\bar{\sigma}) d(\preArt{F/\QQ}{\QQ}(\bar{\sigma}))} 
\\
%& = \left(\frac{\#\Stab(\sigma)\#\left<\sigma\right>}{\#\Stab(\bar{\sigma})\#\Gal(L/K)}\right)
%\bigg/
 %\left(\frac{\#\left<\bar{\sigma}\right>}{\#\Gal(F/K)}\right)
& = \left(\frac{\#\Stab(\sigma)\#\left<\sigma\right>}{\#\Stab(\bar{\sigma})\#\Gal(L/\QQ)}\right)
\bigg/
 \left(\frac{\#\left<\bar{\sigma}\right>}{\#\Gal(F/\QQ)}\right)
 \\
& = \frac{\#\Stab(\sigma)\#\left<\sigma\right>}{\#\Stab(\bar{\sigma})\#\left<\bar{\sigma}\right>\#\Gal(L/F)}  \\
& = \frac{1}{\#\Gal(L/F)} \\
&= \frac{1}{2^{n-1}}. \qedhere
\end{align*}
\end{proof}

%Let $\mathbb{M}_{4,G}$ denote the set of $\Gal(K/\QQ)$ orbits of $\MK$.
 Recall that Proposition \ref{prop:spinrelation}\switchAprint{prop:spinrelation} %, coming from \cite{FIMR}, 
 states that for $\pp$ a prime of $K$ with totally positive generator $\alpha\in\OO$, and for $\sigma\in\Gal(K/\QQ)$ such that $\pp$ and $\pp^\sigma$ are relatively prime,
\[
\spin(\pp,\sigma)\spin(\pp,\sigma\inv) = (\alpha,\alpha^\sigma)_2.
\]
This motivates the following definition.
%This motivates Definition \ref{defn:StarMG}.

\begin{defn}[{\cite[Theorem 5.1]{APSR} }]
\label{defn:StarMG}\switchAprint{defn:StarMG} 
Assume $K$ is Galois with abelian Galois group and satisfies (C4).
%Let $K$ be a number field, Galois over $\QQ$ with abelian Galois group. %such that 2 is inert in $K/\QQ$.
Let $\alpha\in\OO$ denote a
representative of $[\alpha]\in$ $\mathbf{M}_{4}$. 
Define the map
\begin{align*}
\text{$\star$}:  \text{ $\mathbf{M}_{4}$ }& \to \{\pm 1\} \\
  [\alpha] &\mapsto \left\{ 
  \begin{array}{l l}
    1 & \quad \text{if }  (\alpha,\alpha^\sigma)_2 = 1
    %\prod_{v2}(\alpha,\alpha^\sigma)_v = 1 
    \text{ for all non-trivial } \sigma\in \Gal(K/\QQ),  \\
    -1 & \quad \text{otherwise.}\\
  \end{array} \right.
\end{align*}
%Note that $\star(\alpha^\sigma)=\star(\alpha)$ for all $\sigma\in\Gal(K/\QQ)$.
\end{defn}
Observe that $\star$ is a well-defined map by Lemma~\ref{lem:Hilb.welldefn.M4}. 
If \eqref{spin} holds for some $\alpha\in\OO$, then it holds for $\alpha^{\sigma}$ for any $\sigma\in\Gal(K/\QQ)$.  Therefore $\star(\alpha)=\star(\alpha^\sigma)$ for all $\sigma\in\Gal(K/\QQ)$.
Recall the map $\mathbf{N}: \mathbf{M}_4 \to \pm 1$ from Definition \ref{defn:NMG}\switchAprint{defn:NMG}. 
Let $\star_+$ denote the restriction of $\star$ to 
\[\MK^+\defeq \{\alpha \in \MK: \mathbf{N}(\alpha)=1\}\]
and let $\star_-$ denote the restriction of $\star$ to 
\[\MK^-\defeq \{\alpha \in \MK: \mathbf{N}(\alpha)=-1\}.\]

Define $S_+$ to be the set of odd rational primes congruent to $1 \mod 4$ that split completely in $K/\QQ$. Similarly, define $S_-$ to be the set of odd rational primes congruent to $-1 \mod 4$ that split completely in $K/\QQ$. 
Recall
\[
R \defeq \{p\in S: \spin(\pp,\sigma) \spin(\pp,\sigma\inv)=1  \text{ for all } \sigma\neq 1 \in \Gal(K/\QQ)\}.
\]
For a fixed sign $\pm$, define $R_\pm \defeq R\cap S_\pm$.

\begin{thm}\label{thm:pmdensityformulas}\switchAprint{thm:pmdensityformulas}
Assume $K$ satisfies properties (C1)-(C4).
%Let $K$ be a cyclic totally real number field of odd degree $n$ with odd class number such that $2$ is inert in $K/\QQ$ and every totally positive unit is a square unit. 
Then
\[
d(R|S) = \frac{\#\ker(\star)}{2^n},
\]
\[
d(R_+|S_+) = \frac{\#\ker(\star_+)}{2^{n-1}} \quad \text{and} \quad
d(R_-|S_-) = \frac{\#\ker(\star_-)}{2^{n-1}}.
\]
\end{thm}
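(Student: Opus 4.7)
The starting point is the reduction of the defining condition of $R$ to a pointwise condition on the image $\mathbf{r}_4(\pp)\in\mathbf{M}_4$. Fix $p\in S$, choose any prime $\pp$ of $K$ above $p$, and let $\alpha$ be a totally positive generator of $\pp^{h}$. By Proposition~\ref{prop:spinrelation} together with condition (C4), which ensures there is a unique place $v$ of $K$ above $2$, we obtain
\[
\spin(\pp,\sigma)\,\spin(\pp,\sigma^{-1})=(\alpha,\alpha^{\sigma})_{2}\quad\text{for every non-trivial }\sigma\in\Gal(K/\QQ).
\]
Hence $p\in R$ if and only if $(\alpha,\alpha^{\sigma})_{2}=1$ for all non-trivial $\sigma$, which is precisely $\star([\alpha])=1$. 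By Lemma~\ref{lem:Hilb.welldefn.M4} this condition descends to $\mathbf{M}_{4}$, so it depends only on $\mathbf{r}_4(\pp)=[\alpha]$. Replacing $\pp$ by $\pp^{\tau}$ replaces $\alpha$ by $\alpha^{\tau}$ and permutes $\sigma$ within $\Gal(K/\QQ)\setminus\{1\}$; since the unique place above $2$ is fixed by Galois, the resulting collection of Hilbert symbols is unchanged. Thus
\[
p\in R\iff \mathbf{r}_4(\pp)\in\ker(\star)\text{ for any (equivalently, every) }\pp\mid p.
\]

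Next, I translate densities over rational primes into densities over prime ideals of $K$. Since every $p\in S$ splits completely in $K/\QQ$, each such $p$ has exactly $n$ primes of $K$ above it, all of norm $p$. Writing $S'$ for the set of primes of $K$ above primes of $S$ and $\tilde{R}\defeq\{\pp\in S':\mathbf{r}_4(\pp)\in\ker(\star)\}$, we get $\#S'|_{N}=n\,\#S|_{N}$ and $\#\tilde{R}|_{N}=n\,\#R|_{N}$. The factors of $n$ cancel, so
\[
d(R|S)=d(\tilde{R}\mid S')=\sum_{\alpha\in\ker(\star)}d(\mathbf{r}_4^{-1}(\alpha)\cap S'\mid S')=\frac{\#\ker(\star)}{2^{n}},
\]
using Lemma~\ref{equidistribution}(2) in the last step. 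For the $\pm$ versions, the same dictionary applies with $S_{\pm}'$ in place of $S'$. For $\pp\in S_{\pm}'$ we have $\mathbf{N}(\mathbf{r}_4(\pp))=\pm 1\bmod 4$, so only $\alpha\in\ker(\star)\cap\mathbf{M}_{4}^{\pm}=\ker(\star_{\pm})$ contribute; Lemma~\ref{equidistribution:extpm} gives each such $\alpha$ the weight $1/2^{n-1}$, yielding $d(R_{\pm}|S_{\pm})=\#\ker(\star_{\pm})/2^{n-1}$.

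\textbf{Main obstacle.} There is no substantive analytic difficulty: the theorem is a formal consequence of Proposition~\ref{prop:spinrelation}, Lemmas~\ref{lem:Hilb.welldefn.M4}, \ref{equidistribution}, and~\ref{equidistribution:extpm}. The only points requiring care are (i) verifying that the spin-product condition defining $R$ is independent of the chosen prime $\pp\mid p$, which rests on Galois-equivariance of the Hilbert symbol together with the uniqueness of the place above $2$ supplied by~(C4), and (ii) matching the density over rational primes $p$ with the density over primes $\pp$ of $K$, where complete splitting makes the passage essentially trivial.
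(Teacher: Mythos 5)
Your proof is correct and follows essentially the same route as the paper: reduce the condition defining $R$ to the Hilbert-symbol/$\star$ condition on $\mathbf{r}_4(\pp)\in\mathbf{M}_4$ via Proposition~\ref{prop:spinrelation}, pass from rational primes to primes of $K$ using complete splitting, and then apply the equidistribution statements of Lemmas~\ref{equidistribution} and~\ref{equidistribution:extpm}, summing over $\ker(\star)$ or $\ker(\star_\pm)$. The only cosmetic difference is that you compute $d(R|S)$ directly from Lemma~\ref{equidistribution}(2) rather than deducing it from the $\pm$ cases as the paper does.
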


\begin{proof} 

That $d(R|S) = \#\ker(\star)/2^n$ is proven in \cite[Theorem~6.2]{APSR}, though it will also follow from the proof that
$
d(R_\pm|S_\pm) = {\# \ker(\star_\pm)}/{ 2^{n-1}}
$
since $d(S_\pm|S)=1/2$ and $\ker(\star)$ is the disjoint union of $\ker(\star_+)$ and $\ker(\star_-)$ and $R$ is the disjoint union of $R_+$ and $R_-$.

Recall the map $\mathbf{r}_4$ from Definition \ref{defn:r}\switchAprint{defn:r}. As shown in Definition \ref{defn:StarMG}, $\star(\alpha)=\star(\alpha^\sigma)$ for any $\sigma\in\Gal(K/\QQ)$ so $\star\circ\mathbf{r}_4(\pp)=\star\circ\mathbf{r}_4(\pp^\sigma)$ for any $\sigma\in\Gal(K/\QQ)$.
 By Proposition \ref{prop:spinrelation}\switchAprint{prop:spinrelation}, for each fixed sign $\pm$,
 \[
 R_\pm = \{p\in S_\pm: \star\circ\mathbf{r}_4(\pp)=1 \text{ for }\pp \text{ a prime of $K$ above $p$}\}.
 \]
 
For $N\in\ZZ_+$, let $R_{\pm,N}\defeq \{p\in R_{\pm}: p<N\}$ and   $S_{\pm,N}\defeq \{p\in S_{\pm}: p<N\}$. 
We will prove that 
\[
d(R_\pm|S_\pm) = \frac{\# \ker(\star_\pm)}{ \# \MK^\pm}.
%\quad \text{and} \quad
%d(R_-|S_-) = \frac{\# \ker(\star_-)}{ \# \MK^-}.
\]
Then since $K$ is cyclic of odd degree and 2 is inert in $K/\QQ$, we can apply Proposition \ref{prop:Mundy:ext}\switchAprint{prop:Mundy:ext} to get that $\#\MK = 2^n$. Then since half the elements of $\MK$ are in $\MK^+$ and half in $\MK^-$, $\#\MK^+=\#\MK^- = 2^{n-1}$. 

%We now show equation (\ref{eqn:pmdensityker}).
Let $\pm$ denote a fixed sign and let $\mp$ denote the opposite sign. Let $S_{\pm,N}'$ denote the set of primes of $K$ laying above primes in $S_{\pm,N}$ and let $R_{\pm,N}'$ denote the set of primes of $K$ laying above primes in $R_{\pm,N}$.
Since primes in $S$ split completely, 
\[
\frac{\#R_{\pm,N}}{\#S_{\pm,N}}= \frac{\#R_{\pm,N}'}{\#S_{\pm,N}'}.
\]
Let $\mathbf{r}_{4,N}$ denote the restriction of $\mathbf{r}_4$ to $S_{\pm, N}'$. Then $R_{\pm,N}'$ is the disjoint union
\[
R_{\pm,N}' = %S_{\pm, N}' \cap \left(\bigsqcup_{\substack{\mathbf{N}(\alpha)=\pm 1 \\ \star(\alpha)=1}} \mathbf{r}_{4,N}\inv(\alpha)\right) = 
\bigsqcup_{\alpha\in\ker(\star_\pm)} \left(S_{\pm, N}' \cap \mathbf{r}_{4,N}\inv(\alpha)\right),
\]
taken over elements $\alpha \in \ker(\star_\pm)$, i.e. elements of $\alpha\in\MK$ such that $\mathbf{N}(\alpha)=\pm 1 \bmod 4$ and $\star(\alpha)=1$.
%$\alpha\in\MK^\pm$ such that $\star(\alpha)=1$, or equivalently taken over $\alpha\in\MK$ such that $\star(\alpha)=1$ and $\mathbf{N}(\alpha)=\pm 1 \bmod 4$. 
Therefore
\[
\frac{\#R_{\pm,N}'}{\#S_{\pm,N}'} = \sum_{\alpha\in\ker(\star_\pm)} \frac{\# \left(S_{\pm, N}' \cap \mathbf{r}_{4,N}\inv(\alpha)\right)}{\#S_{\pm,N}'}
\]
%over elements $\alpha\in\MK$ such that $\star(\alpha)=1$ and $\mathbf{N}(\alpha)=\pm 1 \bmod 4$. 
By Lemma \ref{equidistribution:extpm}\switchAprint{equidistribution:extpm}, for all $\alpha\in \ker(\star_\pm)$,
\[
d(\mathbf{r}_4\inv(\alpha)\cap S_{\pm}' | S_{\pm}') = \frac{1}{\#\MK^\pm} = \frac{1}{2^{n-1}}.
\]
Therefore
\begin{align*}
d(R_\pm|S_\pm) = d(R_\pm'|S_\pm') 
&= \lim_{N\to\infty} \sum_{\alpha\in\ker(\star_\pm)} \frac{\# \left(S_{\pm, N}' \cap \mathbf{r}_{4,N}\inv(\alpha)\right)}{\#S_{\pm,N}'}\\
&=\sum_{\alpha\in\ker(\star_\pm)} \lim_{N\to\infty} \frac{\# \left(S_{\pm, N}' \cap \mathbf{r}_{4,N}\inv(\alpha)\right)}{\#S_{\pm,N}'}\\
& =  \sum_{\alpha\in\ker(\star_\pm)} d(\mathbf{r}_4\inv(\alpha)\cap S_{\pm}' | S_{\pm}')\\
&= \sum_{\alpha\in\ker(\star_\pm)} \frac{1}{2^{n-1}} =\frac{\#\ker(\star_\pm) }{2^{n-1}}. \qedhere
\end{align*}
\end{proof}

\section{Counting Solutions to a Hilbert Symbol Condition}\label{sec:CountingHilbert}
% formerly \section{Starlight Invariant}

% $ m' = \#\{\alpha\neq 1 \in M4 : (\alpha,\alpha^\sigma)_2 =1 \text{ for all non-trivial } \sigma\in\Gal(K/\QQ)\}$. 

% $ m' = nm $.

% Rmk: how does m' relate to $\#\{\alpha\neq 1 \in M4 : (\alpha,\alpha^\sigma)_2 =1 \text{ for all generators } \sigma\in\Gal(K/\QQ)\}$? 

% Rmk: $[K:Q]=n$ is prime, and prime conductor.

In this section, we assume that $K/\QQ$ satisfies (C1)-(C4) and  prove formulae for $\#\ker(\star_{\pm})$.

Fix $\tau$ to be a generator of $\Gal(K/\QQ)$.
For any $\alpha\in K$, write $\alpha_{(k)}\coloneqq \alpha^{\tau^k}$ for $k\in\ZZ$.

\begin{lem}\label{minusone}
$(-1,-1)_2=-1$.
\end{lem}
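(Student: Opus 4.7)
The statement asserts that the Hilbert symbol $(-1,-1)_2$, evaluated at the unique prime of $K$ above the rational prime $2$ (unique by (C4)), equals $-1$. Equivalently, I need to show that $-1$ is not a sum of two squares in the completion $K_2$; that is, that the equation $X^2+Y^2+Z^2=0$ has no nontrivial solution in $K_2$.

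My plan is a direct argument in the residue field $\FF_{2^n} = \OO_{K_2}/(2)$. Since $2$ is inert in $K/\QQ$ by (C4), the element $2$ is a uniformizer of $K_2$, so any putative solution can be scaled to lie in $\OO_{K_2}^3$ with at least one coordinate a unit. Working in characteristic $2$, I have $X^2+Y^2+Z^2=(X+Y+Z)^2$, so $X+Y+Z\equiv 0\pmod 2$ and in particular $Z\equiv X+Y$ in $\FF_{2^n}$. Writing $s=X+Y+Z\in 2\OO_{K_2}$, the identity $s^2=X^2+Y^2+Z^2+2(XY+YZ+ZX)$ then forces $XY+YZ+ZX\equiv 0\pmod 2$. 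Substituting $Z\equiv X+Y$ into this last congruence gives $X^2+XY+Y^2\equiv 0$ in $\FF_{2^n}$. Having a unit among the three coordinates together with these congruences makes both $X$ and $Y$ nonzero in $\FF_{2^n}$, so $t\defeq X/Y\in\FF_{2^n}$ satisfies $t^2+t+1=0$. Such a root exists iff a primitive cube root of unity lies in $\FF_{2^n}$, iff $3\mid 2^n-1$, iff $\FF_4\subseteq\FF_{2^n}$, iff $n$ is even. Since $n$ is odd by (C2), this is a contradiction, so no nontrivial solution exists and $(-1,-1)_2=-1$.

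A more compact alternative is to invoke the base-change formula $(a,b)_{K_2}=(a,b)_{\QQ_2}^{[K_2:\QQ_2]}$ for $a,b\in\QQ_2^\times$, which follows from the fact that the restriction map on Brauer groups multiplies local invariants by the degree. Taking $a=b=-1$ together with the classical identity $(-1,-1)_{\QQ_2}=-1$ gives $(-1,-1)_{K_2}=(-1)^n=-1$ since $n$ is odd. The only real obstacle here is stylistic: choosing between this one-line route and the self-contained residue-field argument, which is more in keeping with the elementary local computations already used in Lemmas~\ref{lem:Hilb.welldefn.M4} and \ref{lem:Hilb.nondeg.M4}; the substantive content is immediate once (C2) and (C4) are in place.
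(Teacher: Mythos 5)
Your proposal is correct, and both routes you describe differ genuinely from the paper's proof. The paper argues by contradiction: if $(-1,-1)_2=1$, then the Galois-equivariant character $[\alpha]\mapsto(\alpha,-1)_2$ on $\mathbf{M}_4$ is nontrivial by the non-degeneracy of Lemma~\ref{lem:Hilb.nondeg.M4}, hence has fibers of size $2^{n-1}$; comparing those fiber sizes against the sizes of the $\Gal(K/\QQ)$-orbits in $\mathbf{M}_4\setminus\{\pm 1\}$ then yields a divisibility contradiction using that $n$ is odd. Your first route is instead a direct local computation in $\OO_{K_2}$: after scaling a putative solution of $X^2+Y^2+Z^2=0$ to have a unit coordinate, reduction modulo $2$ and $4$ forces a root of $t^2+t+1$ in $\OO_{K_2}/(2)\cong\FF_{2^n}$ (note one must observe, as you do, that the cases where $X$ or $Y$ vanishes mod $2$ are impossible so that $t=X/Y$ makes sense), and no such root exists since $n$ is odd. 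This sidesteps the non-degeneracy lemma and the orbit combinatorics entirely, making the dependence on (C2) and (C4) transparent. Your second route, via the restriction map on Brauer groups (equivalently the norm compatibility of local reciprocity), is the shortest and most conceptual: it reduces the claim to the classical nonsplitness of the Hamilton quaternions over $\QQ_2$, which, multiplied by the odd degree $n=[K_2:\QQ_2]$ on local invariants, persists. Either route is a legitimate replacement for the paper's argument; the residue-field computation matches the elementary local style of Lemmas~\ref{lem:Hilb.welldefn.M4}--\ref{lem:Hilb.nondeg.M4}, while the Brauer-group observation is the most economical.
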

\begin{proof}
Assume for contradiction that $(-1,1)_2=1$.
Consider a homomorphism $\psi:\mathbf{M}_4\rightarrow\{\pm 1\}$ given by $[\alpha]\mapsto(\alpha,-1)_2$.
Since the Hilbert symbol is non-degenerate, and $-1$ is not a square modulo $4$ in $K$, $\psi$ is not identically $1$. Therefore $\#\ker\psi=\#\mathbf{M}_4/\#\im\psi=2^{n-1}$.

For any $[\alpha]\in\mathbf{M}_4\setminus\{\pm 1\}$, we have $(\alpha_{(k)},-1)_2=(\alpha,-1)_2$ for any $k$.
Therefore $\psi$ is stable under the Galois action. The size of each Galois orbit is $n$ except the orbit of $\pm 1$.
But then $n$ divides both $\#\{[\alpha]\in\mathbf{M}_4\setminus\{\pm 1\} :\psi(\alpha)=1\}=\#\{[\alpha]\in\mathbf{M}_4 :\psi(\alpha)=1\}-2=2^{n-1}-2$ and $\#\{[\alpha]\in\mathbf{M}_4 :\psi(\alpha)=-1\}=2^{n-1}$, which is a contradiction.
\end{proof}
Our aim is to count the number of elements in $\mathbf{M}_4$ with a representative $\alpha\in \OO_K$ satisfying the spin relation
\begin{equation}\label{spin}
(\alpha,\alpha^\sigma)_2=1 \text{ for all non-trivial }\sigma\in\Gal(K/\QQ).
\end{equation}
By Lemma~\ref{lem:Hilb.nondeg.M4}, the property~\eqref{spin} only depends on the class of $[\alpha]\in\mathbf{M}_4$.

%Recall that~\eqref{spin} is well-defined on Galois orbits of $\mathbf{M}_4$. 

\subsection{The Hilbert symbol as a bilinear form on \texorpdfstring{$\mathbf{M}_4$}{M4}}

By the Kronecker--Weber theorem, $K$ is contained in the cyclotomic field $\QQ(\zeta_{\mathfrak{f}})$, where $\mathfrak{f}$ is the conductor of $K$. The conductor $\mathfrak{f}$ is odd since we assumed that $2$ is unramified in $K$.
By~\cite[Theorem 4.5]{FGS}, there exists a normal $2$-integral basis of $\QQ(\zeta_{\mathfrak{f}})$, i.e.\ we can find some $a\in\OO_{\QQ(\zeta_{\mathfrak{f}})}$ such that the  localization of $\OO_{\QQ(\zeta_{\mathfrak{f}})}$ at $2$ can be written as $\OO_{\QQ(\zeta_{\mathfrak{f}}),2}=\oplus_{g\in \Gal(\QQ(\zeta_{\mathfrak{f}})/\QQ)}\ZZ_{(2)}a^{g}$.
Similar to the classic result for integral bases~\cite[Proposition~4.31(i)]{Narkiewicz}, taking $y=\tr_{\QQ(\zeta_{\mathfrak{f}})/K} (a)$, then $\{y,y^\tau, \dots, y^{\tau^{n-1}}\}$ gives a normal $2$-integral basis of $K$.
Since $\ZZ_{(2)}/2\cong \ZZ/2$ and $\OO_{K,2}/2\cong \OO_K/2$, we know that $y,y^\tau, \dots, y^{\tau^{n-1}}$ also form a normal $\FF_2$-basis of $\OO_K/2$.

Set $\alpha=1+2y$. It follows from the isomorphism in~\eqref{eq:M4iso} that
\[\mathbf{M}_4=\left\{\prod_{i=0}^{n-1}[\alpha_{(i)}]^{u_i}:(u_0,\dots,u_{n-1})\in\FF_2^n\right\}.\]
Write $(\alpha,\alpha_{(i)})_2=(-1)^{c_i}$, $c_i\in\{0,1\}$.
Note that $(\alpha_{(i)},\alpha_{(j)})_2=(\alpha,\alpha_{(j-i)})_2$.
The Hilbert symbol is multiplicatively bilinear, so we can represent $(\parm,\parm)_2$ by the matrix
\begin{equation}\label{eq:defA}
A\coloneqq 
\begin{pmatrix}
  c_0     & c_{n-1} & c_{n-2} & \dots & c_1 \\
  c_1     & c_0     & c_{n-1} & \dots & c_2 \\
  c_2     & c_1     & c_0     & \dots & c_3 \\
  \vdots  & \vdots  & \vdots  & \ddots& \vdots\\
  c_{n-1} & c_{n-2} & c_{n-3} & \dots & c_0 \\
\end{pmatrix}
\end{equation}
with respect to the basis $[\alpha_{(i)}]$, $0\leq i\leq n-1$.

Define the $n\times n$ $\FF_2$-matrix 
\[T_1=
\begin{pmatrix}
  0 & 1 & 0 & 0 & \dots & 0 \\
  0 & 0 & 1 & 0 & \dots & 0 \\
  0 & 0 & 0 & 1 & \dots & 0 \\
  \vdots & \vdots & \vdots & \vdots & \ddots & \vdots\\ 
  0 & 0 & 0 & 0 & \dots & 1 \\
  1 & 0 & 0 & 0 & \dots & 0 \\
\end{pmatrix},
\]
$T_k=T_1^k$ and $T_0=I$.

\begin{lem}
Let $A$ be the matrix representation of $(\parm,\parm)_2$ on $\mathbf{M}_4$ with respect to a normal basis, as given in \eqref{eq:defA}.
Define a map
\begin{align*}
\Psi:\ &\FF_2^n\rightarrow \FF_2[x]/(x^n-1)\\
&\mathbf{u}=(u_0,\dots,u_{n-1})
\mapsto F_{\mathbf{u}}(x)\coloneqq u_0+u_1x+u_2x^2+\dots+u_{n-1}x^{n-1}.
\end{align*}
Also define
\[
\Phi: 
\FF_2^n
\rightarrow  \FF_2^n
\qquad \mathbf{u}
\mapsto 
(
  \mathbf{u}^T T_0\mathbf{u},\
  \mathbf{u}^T T_1\mathbf{u},\ 
  \dots,\
  \mathbf{u}^T T_{n-1}\mathbf{u}
).
\]
Let $B\coloneqq\Psi\circ\Phi$, so
\[
B:
\FF_2^n
\rightarrow \FF_2[x]/(x^n-1)
\qquad
\mathbf{u}
\mapsto x^n\cdot F_{\mathbf{u}}(x)F_{\mathbf{u}}(1/x)\bmod (x^n-1).
\]
Then
$\#\ker(\star_+) 
=\#B^{-1}(0)$ and $\#\ker(\star_-) =\# B^{-1}(h(x))$, where $h(x)=\Psi(A^{-1}(1,0,\dots,0))$. Furthermore 
\begin{equation}\label{eq:hreal}
h(x)\equiv x^n h(1/x)\bmod (x^n-1).
\end{equation}
\end{lem}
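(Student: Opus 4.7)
The plan is to translate the spin relation and the norm condition into polynomial identities in $R\defeq\FF_2[x]/(x^n-1)$ and then to exploit the non-degeneracy of the Hilbert symbol.

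First I would use the normal basis to identify $\mathbf{M}_4$ with $R$ via $\beta_\mathbf{u}\defeq\prod_i[\alpha_{(i)}]^{u_i}\leftrightarrow F_\mathbf{u}(x)$. Under this identification the Galois action $\tau$ becomes multiplication by $x^{-1}$ (so $\beta_\mathbf{u}^{\tau^k}\leftrightarrow x^{-k}F_\mathbf{u}(x)$), while the circulant matrix $A$ acts as multiplication by $C(x)\defeq\sum_i c_i x^i$. Expanding the bilinear form, a direct computation gives
\[
(\beta_\mathbf{u},\beta_\mathbf{u}^{\tau^k})_2=(-1)^{[C(x)B(\mathbf{u})]_k},
\]
where $[\,\cdot\,]_k$ denotes the $x^k$-coefficient, so that the spin relation $(\beta_\mathbf{u},\beta_\mathbf{u}^{\tau^k})_2=1$ for all $k\ne 0$ is equivalent to $C(x)B(\mathbf{u})\in\FF_2\cdot 1\subset R$. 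Lemma~\ref{lem:Hilb.nondeg.M4} then provides the key input: non-degeneracy of the Hilbert symbol on $\mathbf{M}_4$ says $A$ is invertible, equivalently $C(x)$ is a unit in $R$. Consequently, $C(x)B(\mathbf{u})\in\FF_2$ forces $B(\mathbf{u})\in\{0,C(x)^{-1}\}$, and by the definition of $\Psi$ the second element is exactly $h(x)$.

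Next I would match the two possibilities $B(\mathbf{u})=0$ and $B(\mathbf{u})=h(x)$ to the sign of the norm. Since the $y_{(i)}$ form a normal basis of $\OO/2=\FF_{2^n}$ over $\FF_2$, we have $\tr(y)=1$, from which two facts follow. First, the image of $1\in\FF_{2^n}$ in this basis is $(1,\dots,1)$, so $[-1]\in\mathbf{M}_4$ corresponds to $\mathbf{v}_0=(1,\dots,1)$; a short computation then yields $(\beta_\mathbf{u},-1)_2=(-1)^{F_\mathbf{u}(1)\,C(1)}$, and specializing to $\mathbf{u}=\mathbf{v}_0$ and invoking Lemma~\ref{minusone} gives $C(1)=1$. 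Second, expanding the product $\prod_j(1+2Y^{\tau^j})$ modulo $4$ with $Y=\sum_{i:u_i=1}y_{(i)}$ shows $\mathbf{N}(\beta_\mathbf{u})\equiv 1+2F_\mathbf{u}(1)\pmod 4$. Using the identity $(\beta,\beta)_2=(\beta,-1)_2$, the constant term of $C(x)B(\mathbf{u})$ equals $F_\mathbf{u}(1)$; evaluating at $x=1$ then forces $F_\mathbf{u}(1)=0$ when $B(\mathbf{u})=0$ and $F_\mathbf{u}(1)=h(1)=C(1)^{-1}=1$ when $B(\mathbf{u})=h(x)$. This produces the two claimed bijections $\ker(\star_+)\leftrightarrow B^{-1}(0)$ and $\ker(\star_-)\leftrightarrow B^{-1}(h(x))$.

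Finally, the palindromic identity $h(x)\equiv x^n h(1/x)\pmod{x^n-1}$ follows by inverting the analogous symmetry $C(x)\equiv x^n C(1/x)\pmod{x^n-1}$ in $R$, which is merely the restatement $c_k=c_{-k}$ of Hilbert symbol symmetry. The main obstacle I anticipate is the bookkeeping in the first step---matching the cyclic Galois shifts, the circulant action of $A$, and the polynomial product $F_\mathbf{u}(x)F_\mathbf{u}(1/x)$ while keeping track of the $x\leftrightarrow 1/x$ inversion---so that the full spin relation is cleanly repackaged as a single polynomial identity in $R$. Once that identification is in place, non-degeneracy and the $(-1,-1)_2$ computation close the argument quickly.
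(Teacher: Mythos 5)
Your proposal is correct and takes essentially the same approach as the paper's proof: your identification of $\mathbf{M}_4$ with $\FF_2[x]/(x^n-1)$ via the normal basis and the translation of the circulant matrix $A$ into multiplication by $C(x)=\sum_i c_ix^i$ is exactly the paper's rewriting of the spin condition as $A\circ\Phi(\mathbf{u})\in\{(0,\dots,0)^T,(1,0,\dots,0)^T\}$, and your use of non-degeneracy to invert $C(x)$ and of Lemma~\ref{minusone} together with $(\beta,\beta)_2=(\beta,-1)_2$ to sort the two cases by the norm mod $4$ matches the paper's concluding computation $(\alpha,\alpha)_2=(\Norm_{K/\QQ}(\alpha),-1)_2$. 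The only cosmetic differences are that you unpack the norm step through the explicit coordinates $(1,\dots,1)$ for $[-1]$ and the value $C(1)=1$, whereas the paper compresses this into one chain of Hilbert-symbol identities, and that you derive \eqref{eq:hreal} from $C(x)\equiv x^nC(1/x)$ rather than from the symmetry of $A^{-1}$; both are the same fact.
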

\begin{proof}
For any $\mathbf{u}=(u_0,\dots,u_{n-1}),
\mathbf{v}=(v_0,\dots,v_{n-1})\in\FF_2^{n}$,
we have
\[\left(\prod_{i}\alpha_{(i)}^{u_i},\prod_{j}\alpha_{(j)}^{v_j}\right)_2
=(-1)^{\mathbf{u}^TA\mathbf{v}}.\]
Since $(\parm,\parm)_2$ is non-degenerate on $\mathbf{M}_4$ by Lemma~\ref{lem:Hilb.nondeg.M4}, the matrix $A$ has rank $n$ and is invertible. Note also that $A$ is symmetric.

Now $\prod_{i}\alpha_{(i)}^{u_i}$, $\mathbf{u}=(u_0,\dots,u_{n-1})\in\FF_2^{n}$  satisfies~\eqref{spin} if and only if 
\begin{equation}\label{matrixcond}
\mathbf{u}^TAT_1\mathbf{u}=\mathbf{u}^TAT_2\mathbf{u}=\dots=\mathbf{u}^TAT_{n-1}\mathbf{u}=0.
\end{equation}
Since $\{T_0,T_1,\dots,T_{n-1}\}$ is a basis of $\GL_n(\FF_2)$, we can write 
\[A=\sum_{i=0}^{n-1}c_iT_i,\qquad c_i\in \FF_2.\]
Then~\eqref{matrixcond} becomes
\begin{equation}\label{eq:Acond}
A\circ\Phi(\mathbf{u})=A
\begin{pmatrix}
  \mathbf{u}^T T_0\mathbf{u} \\
  \mathbf{u}^T T_1\mathbf{u} \\
  \vdots\\ 
  \mathbf{u}^T T_{n-1}\mathbf{u} \\
\end{pmatrix}
\in 
\left\{
\begin{pmatrix}
  0 \\
  0 \\
  \vdots\\ 
  0 \\
\end{pmatrix},
\begin{pmatrix}
  1 \\
  0 \\
  \vdots\\ 
  0 \\
\end{pmatrix}
\right\}.
\end{equation}

Since $A$ is invertible, we can set $h(x)=\Psi(A^{-1}(1,0,\dots,0))$.  
Notice that $\Psi$ is a one-to-one correspondence. Then~\eqref{eq:Acond} can be rewritten as $B(\mathbf{u})=\Psi\circ\Phi(\mathbf{u})\in\{0,h(x)\}$. 
Since $A$ is symmetric, $A^{-1}$ is also symmetric, so \eqref{eq:hreal} holds.
Also $(\alpha,\alpha)_2=(\alpha,-1)_2=(\alpha,-1)_2^n=\prod_{i}(\alpha_{(i)},-1)_2=(\Norm_{K/\QQ}(\alpha),-1)_2$, which is $1$ if $\Norm_{K/\QQ}(\alpha)\equiv 1\bmod 4$ and $-1$ if $\Norm_{K/\QQ}(\alpha)\equiv -1\bmod 4$ by Lemma~\ref{minusone}. Therefore
$\#\ker(\star_+) 
=\#B^{-1}(0)$ and $\#\ker(\star_-) =\# B^{-1}(h(x))$.
\end{proof}
\subsection{The counting problem}
Our aim is to obtain the size of the preimage of $0$ and $h(x)$ under $B$.
For any polynomial $f$, let $f^*$ denote its reciprocal, i.e.\ $f^*(x)=x^{\deg f}\cdot f(1/x)$.

\begin{lem}\label{lemma:factor}
%\cite[Theorem 11.2.7]{CoxGT}
For any factor $k\neq 1$ of $n$, let $d_k$ be the order of $2$ in $(\ZZ/k)^\times$. Also set $d_1=1$.
Consider the following factorisation in $\FF_2[x]$, 
\[
x^n-1= f_1(x)\dots f_r(x)f^*_{m+1}(x)\dots f^*_r(x),
\]
where $f_i$ are irreducible and $f_i=f_i^*$ for $i=1,\dots, m$.
 Then
$\sum_{i=1}^r\deg f_i=\sum_{k\mid n}r_kd_k$ and $r=\sum_{k\mid n}r_k$ and $m=\sum_{k\mid n}m_k$, where $r_1=m_1=1$, and 
\[(r_k,m_k)=
\begin{cases}
\hfil \left(\frac{\phi(k)}{2d_k},\ 0\right)& \text{ if }d_k\text{ is odd,}\\
\left(\frac{\phi(k)}{d_k},\ \frac{\phi(k)}{d_k}\right)& \text{ if }d_k\text{ is even,}
\end{cases}\]
for $k\neq 1$.
\end{lem}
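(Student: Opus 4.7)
The plan is to work via the standard factorisation of $x^n-1$ over $\FF_2$ in terms of cyclotomic polynomials. Since $n$ is odd, $\gcd(n,2)=1$, so $x^n-1$ is separable over $\FF_2$ and the identity $x^n-1=\prod_{k\mid n}\Phi_k(x)$ holds in $\FF_2[x]$. Classical theory of cyclotomic polynomials over finite fields then gives that each $\Phi_k(x)$ factors over $\FF_2$ into exactly $\phi(k)/d_k$ monic irreducibles, each of degree $d_k$, whose root-sets are the Galois orbits of primitive $k$-th roots of unity in $\overline{\FF_2}$ under the Frobenius $\zeta\mapsto\zeta^2$. Concretely, the orbit of a primitive $k$-th root $\zeta$ is $\{\zeta^u:u\in\langle 2\rangle\}$, where $\langle 2\rangle$ denotes the subgroup of $(\ZZ/k)^\times$ generated by $2$.

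I would then translate self-reciprocity into a group-theoretic condition on $\langle 2\rangle$. If $f$ is the irreducible factor whose roots form the orbit of $\zeta$, then $f^*$ is the irreducible factor whose roots form the orbit of $\zeta^{-1}$, so $f=f^*$ exactly when $-1\in\langle 2\rangle\le(\ZZ/k)^\times$; otherwise $f\ne f^*$, and inversion pairs the $\phi(k)/d_k$ orbits into $\phi(k)/(2d_k)$ reciprocal pairs of distinct irreducibles. The lemma therefore reduces to determining when $-1\in\langle 2\rangle$ and then bookkeeping the resulting counts.

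For $k=1$ the only factor is $x+1$, which is self-reciprocal, giving $r_1=m_1=1$. For $k\ne 1$ with $d_k$ odd, the cyclic group $\langle 2\rangle$ has odd order and hence contains no element of order $2$; since $k$ is odd and $k>1$, the element $-1$ has order $2$ in $(\ZZ/k)^\times$, so $-1\notin\langle 2\rangle$, no orbit is self-reciprocal, and the $\phi(k)/d_k$ orbits pair up under inversion into $\phi(k)/(2d_k)$ reciprocal pairs, giving $r_k=\phi(k)/(2d_k)$ and $m_k=0$. For $k\ne 1$ with $d_k$ even, the unique order-$2$ element $2^{d_k/2}$ of $\langle 2\rangle$ is to be identified with $-1\in(\ZZ/k)^\times$, so $-1\in\langle 2\rangle$ and every orbit is self-reciprocal, giving $r_k=m_k=\phi(k)/d_k$. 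Summing over $k\mid n$ yields the asserted identities $r=\sum_k r_k$, $m=\sum_k m_k$, and $\sum_{i=1}^r\deg f_i=\sum_k r_k d_k$.

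The main obstacle is the $d_k$ even case: showing that the order-$2$ element $2^{d_k/2}\in\langle 2\rangle$ actually equals $-1$ in $(\ZZ/k)^\times$. This is immediate for prime-power $k$, where $(\ZZ/k)^\times$ is cyclic and has a unique involution; for general composite $k$, however, $(\ZZ/k)^\times$ can contain several involutions, and one needs an additional argument exploiting the prime-power decomposition of $k$ to pin down $2^{d_k/2}$.
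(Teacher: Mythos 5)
You've correctly identified the gap, and it is a real one --- in fact it is fatal to the lemma as stated, and the paper's own proof contains the identical flaw. The worry you raise is exactly right: when $d_k$ is even, the unique involution $2^{d_k/2}$ of the cyclic group $\langle 2\rangle \leq (\ZZ/k)^\times$ need not equal $-1$ once $k$ is composite. The smallest odd counterexample is $k=15$: here $d_{15}=4$ is even, yet $2^{d_{15}/2}=4\not\equiv -1\equiv 14\bmod 15$, so $-1\notin\langle 2\rangle=\{1,2,4,8\}$. Correspondingly $\Phi_{15}(x)\equiv(x^4+x+1)(x^4+x^3+1)\bmod 2$, and these two factors are reciprocal to one another rather than self-reciprocal, so for $n=15$ one has $(r_{15},m_{15})=(1,0)$, not the $(2,2)$ the lemma asserts.

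The paper's proof makes the same leap when it asserts that the Frobenius orbit $\{\gamma,\gamma^2,\dots,\gamma^{2^{d_k-1}}\}$ ``is closed under inversion precisely when $d_k$ is even.'' Closure under inversion is equivalent to $-1\in\langle 2\rangle$. Odd $d_k$ does force $-1\notin\langle 2\rangle$ (a cyclic group of odd order has no involution), but even $d_k$ does not force $-1\in\langle 2\rangle$ for composite $k$, as $k=15$ shows. The correct self-reciprocity criterion is solvability of $2^j\equiv -1\bmod k$, which by CRT amounts to each $d_{p^a}$ being even \emph{and} the $2$-adic valuations $v_2(d_{p^a})$ all coinciding, as $p^a$ ranges over the prime powers exactly dividing $k$. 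So your instinct that an extra argument is required was sound, but no such argument can succeed: the lemma itself needs amendment, and with it the formula for $\#\ker(\star_-)$ in Proposition~\ref{prop:Starlight} and the displayed density at $n=15$.
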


\begin{proof}
Take $f$ to be an irreducible factor of $x^n-1$ in $\FF_2[x]$.
Let $\gamma$ be a root of $f$ in an extension of $\FF_2$. Then $\gamma$ is a primitive $k$-th root of unity, where $k$ is some integer dividing $n$. 
Galois theory on finite fields shows that $\Gal(\FF_2(\gamma)/\FF_2)$ is generated by the Frobenius $\varphi: x\mapsto x^2$. Since $\varphi^i: x\mapsto x^{2^i}$ for any $i\in\ZZ$, we see that the order of $\varphi$ must be $d_k$, the order of $2$ in $(\ZZ/k)^{\times}$. Therefore $\deg f=d_k$. 
The set of roots of $f$ is $\{\gamma, \varphi(\gamma),\varphi^2(\gamma),\dots, \varphi^{d_k-1}(\gamma)\}$, which is closed under inversion precisely when $d_k$ is even. Therefore $f$ is self-reciprocal if and only if $d_k$ is even.
There are $\phi(k)$ roots of $x^n-1$ which are primitive $k$-th root of unity, so $(2r_k-m_k)d_k=\phi(k)$. 
\end{proof}
We are now ready to prove the formulae for $\#\ker(\star_+)$ and $\#\ker(\star_-)$.

\begin{prop}\label{prop:Starlight}
For each $k\neq 1$ dividing $n$, let $d_k$ be the order of $2$ in $(\ZZ/k)^\times$. Then
\[\#\ker(\star_+)=1+\prod_{k\mid n,\ d_k\text{odd},\ k\neq 1}2^{\frac{\phi(k)}{2d_k}}\left(\prod_{k\mid n,\ d_k\text{odd},\ k\neq 1}2^{\frac{\phi(k)}{2}}-1\right),\]
and
\[\#\ker(\star_-)=\prod_{k\mid n,\ d_k\text{even},\ k\neq 1}(2^{d_k/2}+1)^{\frac{\phi(k)}{d_k}}\prod_{k\mid n,\ d_k\text{odd},\ k\neq 1}(2^{d_k}-1)^{\frac{\phi(k)}{2d_k}},\]
where $\phi$ denotes the Euler's totient function.
If $n$ is a prime, then writing $d=d_n$, 
\[
(\#\ker(\star_+),\#\ker(\star_-))
=\begin{cases}
\left(1+2^{\frac{n-1}{2d}}(2^{\frac{n-1}{2}}-1),\ (2^d-1)^{\frac{n-1}{2d}}\right)
& \text{ if }d\text{ is odd,}\\
\hfil \left(1,\ (2^{\frac{d}{2}}+1)^{\frac{n-1}{d}}\right) & \text{ if }d\text{ is even.}
\end{cases}
\] 
In particular, when $n=3$, $\#\ker(\star_+)=1$ and $\#\ker(\star_-) =3$.
\end{prop}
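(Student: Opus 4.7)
The plan is to analyse $B$ via the Chinese Remainder Theorem decomposition of $\FF_2[x]/(x^n-1)$ afforded by Lemma~\ref{lemma:factor}, reducing the counting problems $\#B^{-1}(0)$ and $\#B^{-1}(h(x))$ to independent problems on each irreducible factor or reciprocal pair. First I would record the CRT isomorphism
\[
\FF_2[x]/(x^n-1)\cong \FF_2\times\prod_{\substack{k\mid n,\,k\neq 1\\ d_k\text{ even}}}\FF_{2^{d_k}}^{\phi(k)/d_k}\times\prod_{\substack{k\mid n,\,k\neq 1\\ d_k\text{ odd}}}\bigl(\FF_{2^{d_k}}\times\FF_{2^{d_k}}\bigr)^{\phi(k)/(2d_k)},
\]
the last product accounting for the reciprocal pairs $(f_i,f_i^*)$. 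Under this identification the involution $\iota\colon g(x)\mapsto g(x^{-1})$ acts as the identity on $\FF_2$, as the order-two Galois automorphism $y\mapsto y^{2^{d_k/2}}$ (with fixed field $\FF_{2^{d_k/2}}$) on each self-reciprocal factor, and by swapping the two summands of each reciprocal pair.

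Next I would decompose $B(F)=F\cdot\iota(F)$ componentwise. It becomes squaring (i.e.\ the identity) on the $\FF_2$ factor, the relative norm $N\colon\FF_{2^{d_k}}\to\FF_{2^{d_k/2}}$ on each self-reciprocal factor, and $(a,b)\mapsto(ab,ab)$ onto the diagonal $\{(c,c):c\in\FF_{2^{d_k}}\}$ on each reciprocal pair. The relevant fibre sizes are then standard finite-field facts: the fibre of $0$ has size $1$ on $\FF_2$, size $1$ on each self-reciprocal factor, and size $2^{d_k+1}-1$ on each reciprocal pair (the union of the two coordinate axes); the fibre over a nonzero target has size $1$ on $\FF_2$, size $2^{d_k/2}+1$ on each self-reciprocal factor (the kernel of $N$), and size $2^{d_k}-1$ on each reciprocal pair (parametrised by $a\in\FF_{2^{d_k}}^\times$ with $b=c/a$).

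To apply these fibre counts to $\#B^{-1}(h(x))$ I need every non-trivial CRT component of $h$ to be nonzero. I would deduce this by identifying the circulant matrix $A$ with multiplication by a polynomial $a(x)\in\FF_2[x]/(x^n-1)$; then $A^{-1}(1,0,\dots,0)$ represents the inverse $a(x)^{-1}$, and since $A$ is invertible by Lemma~\ref{lem:Hilb.nondeg.M4}, $a(x)$ is a unit, so $h(x)=a(x)^{-1}$ has nonzero CRT components everywhere. The $\iota$-invariance of $h$ already established in the previous lemma then places each component in the appropriate fixed subfield or diagonal, and the fibre formulas of the previous paragraph apply uniformly.

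Multiplying the per-component fibre sizes, with the multiplicities $\phi(k)/d_k$ for self-reciprocal factors and $\phi(k)/(2d_k)$ for reciprocal pairs supplied by Lemma~\ref{lemma:factor}, yields the claimed closed forms for $\#\ker(\star_+)$ and $\#\ker(\star_-)$. The prime-$n$ specialisation is then immediate: only $k=n$ contributes besides $k=1$, with $d=d_n$, and the two cases $d$ odd versus $d$ even correspond respectively to $\phi(n)/(2d)$ reciprocal pairs or $\phi(n)/d$ self-reciprocal factors, each of $\FF_2$-dimension $d$ (resp.\ $d$). I expect the main obstacle to be the identification and $\iota$-invariance analysis of $h(x)$ carried out in the third paragraph, as the rest of the argument amounts to bookkeeping with finite-field norm maps and zero-divisor structure in products of finite fields.
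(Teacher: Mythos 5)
Your CRT decomposition is a genuinely different route from the paper's. The paper counts $\#B^{-1}(0)$ by parametrizing nonzero $F_{\mathbf{u}}=GH$ over the $2^{r-m}$ divisors $G$ of $x^n-1$ containing exactly one of each reciprocal pair together with all self-reciprocal factors, and it counts $\#B^{-1}(h(x))$ via the norm map from $(\ZZ[\zeta_n]/2)^\times$ to $(\ZZ[\zeta_n+\zeta_n^{-1}]/2)^\times$. Your componentwise analysis handles both uniformly. Your per-block fibre sizes are correct, and for $\#\ker(\star_-)$ your product
\[
\prod_{\substack{k\mid n,\ k\neq 1\\ d_k\text{ even}}}(2^{d_k/2}+1)^{\phi(k)/d_k}
\prod_{\substack{k\mid n,\ k\neq 1\\ d_k\text{ odd}}}(2^{d_k}-1)^{\phi(k)/(2d_k)}
\]
does match the proposition and is, in effect, the same computation as the paper's $\#\ker\psi$.

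For $\#\ker(\star_+)$, though, your final step asserts that multiplying the fibre sizes ``yields the claimed closed form'' without actually doing the multiplication, and here it fails. Your method gives
\[
\#B^{-1}(0)=\prod_{\substack{k\mid n,\ k\neq 1\\ d_k\text{ odd}}}\bigl(2^{d_k+1}-1\bigr)^{\phi(k)/(2d_k)},
\]
whereas the proposition asserts $1+2^{t}\bigl(2^{T}-1\bigr)$ with $t=\sum\phi(k)/(2d_k)$ and $T=\sum\phi(k)/2$ (both sums over $k\mid n$, $k\neq 1$, $d_k$ odd). These coincide precisely when $t\le 1$, i.e.\ when there is at most one reciprocal pair. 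Already for $n=31$ (prime, $d=5$, $t=3$) they diverge: your CRT count gives $(2^6-1)^3=250047$, while the stated formula gives $1+2^3(2^{15}-1)=262137$. The CRT count is the correct one; one can confirm it by summing, over all divisors $G$ of $x^n-1$ of the allowed shape, the number of nonzero $H$ of degree less than $n-\deg G$ coprime to $(x^n-1)/G$. The discrepancy traces to a flaw in the paper's own proof: the quantity $2^{r-m}\bigl(2^{\sum(r_k-m_k)d_k}-1\bigr)$ counts pairs $(G,H)$ as if $(G,H)\mapsto GH$ were injective, but any $F$ divisible by both $f_j$ and $f_j^*$ for some $j$ is produced by two distinct choices of $G$. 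So your CRT argument does not prove the proposition as stated; rather, it exposes that the stated formula for $\#\ker(\star_+)$, and hence $s_+$, is wrong whenever $\sum_{k\mid n,\ d_k\text{ odd},\ k\neq 1}\phi(k)/(2d_k)\ge 2$. The agreement with the paper's table is coincidental: every $n\le 15$ listed there has at most one reciprocal pair, so the two expressions happen to agree.
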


\begin{proof}

The first case $B(\mathbf{u})=0$ implies
$(x^n-1)\mid F_{\mathbf{u}}(x)F^*_{\mathbf{u}}(x)$.
Obtain the following factorisation in $\FF_2[x]$ as described in Lemma~\ref{lemma:factor},
\begin{equation}\label{eq:factor}
x^n-1= f_1(x)\dots f_r(x)f^*_{m+1}(x)\dots f^*_r(x),
\end{equation}
where $f_1(x)=x+1$, $f_i$ are irreducible and $f_i=f_i^*$ for $i=1,\dots, m$.
Write $F_{\mathbf{u}}=G\cdot H$, where $G=\gcd(F_{\mathbf{u}},x^n-1)$.
Then for each $k=0,\dots, r$, we have $f_k\mid F_{\mathbf{u}}$ or $f_i^*\mid F_{\mathbf{u}}$.
This leaves us with $2^{r-m}$ choices for $G$. 
Since 
\[\deg\left((x^n-1)/G\right)=n-\sum_{i=1}^r\deg f_i=\sum_{k\mid n}(r_k-m_k)d_k,\]
There are $2^{\sum_{k\mid n}(r_k-m_k)d_k}-1$ choices for $H\not\equiv 0\bmod (x^n-1)/G$, so 
\begin{equation}\label{eq:B0count}
\#\ker(\star_+) 
=1+\#(B^{-1}(0)\setminus\{0\})
=1+2^{r-m}\left(2^{\sum_{k\mid n}(r_k-m_k)d_k}-1\right).
\end{equation}

The second case $B(\mathbf{u})=h(x)$. We count the number of $\mathbf{u}\in\FF_2^n$ such that
\begin{equation}\label{eq:B1}
x^n\cdot F_{\mathbf{u}}(x)F_{\mathbf{u}}(1/x)\equiv h(x)\bmod (x^n-1).
\end{equation}
Fix a primitive complex $n$-th root of unity $\zeta_n$. Consider the isomorphism
\begin{align*}
(\FF_2[x]/(x^n-1))^{\times}\rightarrow (\ZZ[\zeta_n]/2)^{\times}
&&
F_{\mathbf{u}}(x)
\mapsto 
F_{\mathbf{u}}(\zeta_n)\bmod 2.
\end{align*}
Now~\eqref{eq:B1} becomes
\[F_{\mathbf{u}}(\zeta_n)\overline{F_{\mathbf{u}}(\zeta_n)}\equiv h(\zeta_n)\bmod 2.\]
Notice from \eqref{eq:hreal} that $h(\zeta_n)=h(\zeta_n^{-1})=\overline{h(\zeta_n)}$ is real.
We compute from~\eqref{eq:factor}, 
\begin{multline*}
\#(\ZZ[\zeta_n]/2)^{\times}
=\#(\FF_2[x]/(x^n-1))^{\times}\\
=\prod_{i=1}^{r}\#\left(\FF_2[x]/(f_i)\right)^{\times}
\prod_{j={m+1}}^{r}\#\left(\FF_2[x]/(f^*_{j})\right)^{\times}
=\prod_{k\mid n}(2^{d_k}-1)^{2r_k-m_k}.\end{multline*}

Take $g\in\FF_2[x]$ such that
\[\frac{x^n-1}{x-1}\equiv x^{n-1}+x^{n-2}+\dots+x+1= x^{\frac{n-1}{2}}g(x+x^{-1}).\]
We can factorise
$g(x)=g_2(x)\dots g_r(x)$,
where $x^{\deg g_k}\cdot g_k(x+x^{-1})=f_k(x)$ for $2\leq k\leq m$ and 
$x^{\deg g_k}\cdot g_k(x+x^{-1})=f_k(x)f_k^*(x)$ for $m+1\leq k\leq r$.
Then since $(\ZZ[\zeta_n+\zeta_n^{-1}]/2)^{\times}\cong(\FF_2[x]/(g))^{\times}$, we compute
\begin{multline*}
\#\left(\ZZ[\zeta_n+\zeta_n^{-1}]/2\right)^{\times}
=\#(\FF_2[x]/(g))^{\times}\\
=\prod_{i=2}^{r}\#\left(\FF_2[x]/(g_i)\right)^{\times}
=\prod_{k\mid n,\ k\neq 1}(2^{d_k/2}-1)^{m_k}(2^{d_k}-1)^{r_k-m_k}.
\end{multline*}

Our goal is to compute the size of the kernel of the homomorphism
\begin{align*}
\psi:(\ZZ[\zeta_n]/2)^{\times}\rightarrow (\ZZ[\zeta_n+\zeta_n^{-1}]/2)^{\times}
&&
\beta\mapsto \beta\overline{\beta}.
\end{align*}
We claim that $\psi$ is surjective. Since $(\ZZ[\zeta_n+\zeta_n^{-1}]/2)^{\times}$ has odd order, every element is a square, so suppose $\beta^2\in(\ZZ[\zeta_n+\zeta_n^{-1}]/2)^{\times}$, then $\psi(\hat{\beta})=\beta^2$ for any lift $\hat{\beta}\in\ZZ[\zeta_n+\zeta_n^{-1}]$ of $\beta$.
Therefore
\begin{multline}\label{eq:B1count}
\#\ker(\star_-) =\# B^{-1}(h(x))
=\#\ker\psi\\
=\frac{\#(\ZZ[\zeta_n]/2)^{\times}}{\#\im\psi}
=\prod_{k\mid n,\ k\neq 1}(2^{d_k/2}+1)^{m_k}(2^{d_k}-1)^{r_k-m_k}.
\end{multline}

Putting in~\eqref{eq:B0count} and~\eqref{eq:B1count} the values of $r$ and $m$ in terms of $n$ and $d$ as in Lemma~\ref{lemma:factor} proves the proposition.
\end{proof}

\section{Joint Spins}\label{sec:dFR}
Fix a sign $\mu\in\{\pm\}$. Recall that $S_{\mu}$ is the set of rational primes $p\equiv \mu 1 \bmod 4$ that split completely in $K/\QQ$, i.e., unramified and of residue degree $1$ in $K/\QQ$, and that $F_{\mu}$ is the set of $p\in S_{\mu}$ of residue degree $1$ in $K(p)/\QQ$. By Corollary~\ref{cor:SpinRes}, a prime $p\in S_{\mu}$ belongs to $F_{\mu}$ if and only if $\spin(\pp, \sigma) = 1$ for all non-trivial $\sigma\in\Gal(K/\QQ)$ and any prime ideal $\pp$ of $K$ lying above $p$. Recall that $R_{\mu}$ is the set of primes $p\in S_{\mu}$ such that $\spin(\pp, \sigma)\spin(\pp, \sigma^{-1}) = 1$ for all non-trivial $\sigma\in\Gal(K/\QQ)$ and all prime ideals $\pp$ of $K$ lying above $p$, so that $F_{\mu}\subset R_{\mu}$. In this section, we will prove the following formula for the relative density of $F_{\mu}$ in $R_{\mu}$, denoted by $d(F_{\mu}|R_{\mu})$.
\begin{thm}\label{thm:JoinSpinCor}\switchAprint{thm:JoinSpinCor} 
Assume Conjecture~$C_{\eta}$ for $\eta = \frac{2}{n(n-1)}$. Then 
$$
d(F_{\mu}|R_{\mu}) =  2^{-\frac{n-1}{2}}.
$$
\end{thm}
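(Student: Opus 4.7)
The plan is to Fourier-expand $\mathbf{1}_{F_{\mu}}$ inside $R_{\mu}$ in joint-spin characters and show that only the trivial character contributes asymptotically. The key combinatorial input is the involution $\sigma \mapsto \sigma^{-1}$ on $\Gal(K/\QQ) \setminus \{1\}$; since $n$ is odd by (C2), this involution is fixed-point-free and partitions the set into $(n-1)/2$ pairs. Fix a set $\Sigma$ of representatives. By the definition of $R_{\mu}$ together with Proposition~\ref{prop:spinrelation}, every $p \in R_{\mu}$ satisfies $\spin(\pp,\sigma) = \spin(\pp,\sigma^{-1})$ for all non-trivial $\sigma$; by Corollary~\ref{cor:SpinRes}, membership in $F_{\mu}$ is equivalent to $\spin(\pp,\sigma) = 1$ for all such $\sigma$. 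Consequently, for $p \in R_{\mu}$,
\[
\mathbf{1}_{F_{\mu}}(p) \;=\; \prod_{\sigma \in \Sigma} \frac{1 + \spin(\pp,\sigma)}{2} \;=\; 2^{-(n-1)/2} \sum_{T \subseteq \Sigma} \prod_{\sigma \in T} \spin(\pp,\sigma).
\]
Summing over $p \in R_{\mu}$ with $p \leq x$, the $T = \emptyset$ term contributes the purported main term $2^{-(n-1)/2}\, |R_{\mu} \cap [1,x]|$, so the theorem reduces to proving
\[
\mathcal{S}_{T}(x) \;:=\; \sum_{\substack{p \in R_{\mu} \\ p \leq x}} \prod_{\sigma \in T} \spin(\pp,\sigma) \;=\; o(\pi(x))
\]
for every non-empty $T \subseteq \Sigma$.

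The crucial structural point is that $\Sigma$ was constructed so that no element is the inverse of another; hence every non-empty $T \subseteq \Sigma$ satisfies the Koymans--Milovic hypothesis ``$\sigma \in T$ implies $\sigma^{-1} \notin T$'' under which \cite{JDS} proves oscillation of joint spin products, albeit only over principal prime ideals. To reduce $\mathcal{S}_{T}(x)$ to that framework, I would first strip off the restriction $p \in R_{\mu}$. The condition ``$p \in R_{\mu}$'' depends only on $\mathbf{r}_{4}(\pp) \in \mathbf{M}_{4}$ through $\star_{\mu}$ and $\mathbf{N}$ (Definitions~\ref{defn:StarMG} and~\ref{defn:NMG}), so by orthogonality on $\mathbf{M}_{4} \times (\ZZ/4\ZZ)^{\times}$ its indicator expands as a finite linear combination of $\chi(\mathbf{r}_{4}(\pp))\psi(p)$, with $\chi$ a character of $\mathbf{M}_{4}$ and $\psi$ a Dirichlet character modulo $4$. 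This turns $\mathcal{S}_{T}(x)$ into a finite sum of twisted joint-spin sums
\[
\sum_{\substack{p \in S \\ p \leq x}} \chi(\alpha)\, \psi(p) \prod_{\sigma \in T} \spin(\pp,\sigma),
\]
where $\alpha \succ 0$ generates $\pp^{h}$. The generalization of spin to non-principal ideals (Definition~\ref{defn:spin}) ensures that this expression is well defined even though $\pp$ itself is typically non-principal.

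To each twisted sum I would apply the sums-of-type-I-and-II decomposition pioneered in \cite{FIMR} and extended in \cite{JDS}: Vaughan's identity (or an analogous combinatorial sieve identity) reduces the sum to bilinear forms over smooth ranges, with $\chi(\alpha)\psi(p)$ absorbed into the type-I coefficients and type-II cancellation driven by opening $\prod_{\sigma \in T} \spin(\pp,\sigma)$ via the quadratic reciprocity law in $K$ (as in Lemma~\ref{QR}) into a short residue symbol over one of the bilinear variables. Conjecture~$C_{\eta}$ then supplies the required cancellation in this short character sum; the exponent $\eta = 2/(n(n-1))$ in our hypothesis is exactly the threshold imposed by the interaction between the length $|T| \leq (n-1)/2$ of the joint spin and the degree $n$ of $K$, and it evaluates to $1/3$ for $n=3$, within the unconditional range of Burgess's bound and explaining the unconditional Theorem~\ref{thm:DRPcubic}. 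The main obstacle is carrying out the bilinear analysis for joint non-principal spins with the extra $\chi(\alpha)\psi(p)$ twist: one must verify that the reciprocity manipulations of Section~\ref{sec:onespin} still transform the type-II inner sum into a character sum governed by Conjecture~$C_{\eta}$ at the prescribed $\eta$, with the twist only affecting implied constants and conductors.
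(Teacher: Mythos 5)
Your reduction is essentially the paper's: the same indicator expansion over the $(n-1)/2$ non-inverse pairs, the same observation that membership in $R_{\mu}$ is a condition on $\mathbf{r}_4(\pp)$ (and $p\bmod 4$) so the sum can be decomposed accordingly, and the same reduction to bounding joint-spin oscillation sums via type-I/II estimates with Conjecture~$C_{\eta}$ supplying cancellation at $\eta = 1/(|H|n) \geq 2/(n(n-1))$. Your Fourier expansion into characters of $\mathbf{M}_4\times(\ZZ/4)^{\times}$ is functionally equivalent to the paper's partition of $R_{\mu}$ by $G$-orbits $A\in\mathcal{A}_0$ followed by a split into congruence classes mod $F$, so there is no real divergence in strategy.

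The one place where you name the obstacle but do not supply the paper's idea is the passage from non-principal prime ideals to sums over totally positive integers of $\OO$, which is the genuinely new ingredient in Section~\ref{sec:dFR} beyond~\cite{JDS}. Merely having an extended Definition~\ref{defn:spin} does not let you run the bilinear machinery, because that machinery sums over elements $\alpha\in\mathcal{D}$, not ideals. The paper's resolution is to fix a set $\mathcal{C}$ of degree-one prime ideal class representatives, pair each $\aaa$ with $\pp\in\mathcal{C}$ so that $\aaa\pp^2$ is principal with totally positive generator $\alpha_0$, and use $h$ odd to derive $\spin(\aaa,\sigma) = \leg{\alpha_0}{\sigma(\alpha_0)}$ (and its bilinear analogue for $\aaa\bb$). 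This identity, together with the bijection $\aaa\mapsto\alpha_0$ restricted by the congruence $\alpha_0\equiv 0\bmod\pp^2$ and the compatibility $\mathbf{r}_4(\aaa)\in A \Leftrightarrow \alpha_0^h\in A$, is what makes the type-I and type-II sums literally match those already estimated in \cite{JDS}. Without spelling out that reduction, ``apply Vaughan and Lemma~\ref{QR}'' does not yet produce a sum to which Conjecture~$C_{\eta}$ applies; filling this in is the substance of the paper's proofs of Propositions~\ref{prop:SumsTypeI} and~\ref{prop:SumsTypeII}.
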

Since each $p\in S_{\mu}$ splits into exactly the same number of prime ideals in $\OO$, and since $R_{\mu}$ is a set of primes of positive natural density, it suffices to show that  
\begin{equation}\label{dFR}
\sum_{\substack{\Norm(\pp)\leq X \\ \pp\text{ lies over }p\in F_{\mu}}}1 = 2^{-\frac{n-1}{2}}\sum_{\substack{\Norm(\pp)\leq X \\ \pp\text{ lies over }p\in R_{\mu}}}1 + o(X(\log X)^{-1}). 
\end{equation}
Let $\tau$ be a generator of $\Gal(K/\QQ)$, a cyclic group of order $n$. Then, by definition of the set $R_{\mu}$, a prime $p\in R_{\mu}$ belongs to the set $F_{\mu}$ if and only if $\spin(\pp,\tau^k)=1$ for all $k\in\{1, 2, \ldots, \frac{n-1}{2}\}$. The product
$$
\prod_{k = 1}^{\frac{n-1}{2}}\frac{1+\spin(\pp, \tau^k)}{2}
$$
is the indicator function of the property that $\spin(\pp, \tau^k) = 1$ for all $k\in\{1, 2,\ldots, \frac{n-1}{2}\}$. Expanding this product gives
\begin{equation}\label{indicator}
2^{-\frac{n-1}{2}}\sum_{H\subset \{\tau, \ldots, \tau^{\frac{n-1}{2}}\}}\prod_{\sigma\in H}\spin(\pp, \sigma),
\end{equation}
where the sum is over all subsets $H$ of $\{\tau, \tau^2, \ldots, \tau^{\frac{n-1}{2}}\}$. When $H = \emptyset$, the product is $1$ by convention.

Let $L/K$ be an abelian extension with Galois group isomorphic to $\MK^{\mu}$, and let $\mathcal{A}$ denote the set of disjoint $G$-orbits of elements of $\MK^{\mu}$, so that we can write
$$
\MK^{\mu} = \bigsqcup_{A\in\mathcal{A}} A.
$$
Each $G$-orbit $A$ is then a collection of invertible congruence classes modulo $4\OO$ that are distinct modulo squares. Let $\mathcal{A}_0\subset \mathcal{A}$ be the set of $G$-orbits $A$ such that $\spin(\pp, \sigma)\spin(\pp, \sigma^{-1})=1$ for all non-trivial $\sigma\in G$ and for all prime ideals $\pp$ such that $\mathbf{r}_4(\pp)\in A$. Note that a prime ideal $\pp$ in $\OO$ lies over a prime $p\in R_{\mu}$ if and only if $\mathbf{r}_4(\pp) \in A$ for some $A\in\mathcal{A}_0$.

Summing \eqref{indicator} over all prime ideals $\pp$ of norm $\Norm(\pp)\leq X$, we get that
$$
\sum_{\substack{\Norm(\pp)\leq X \\ p\in F_{\mu}}}1 = 2^{-\frac{n-1}{2}}\sum_{\substack{H\subset \{\tau, \ldots, \tau^{\frac{n-1}{2}}\} \\ A\in\mathcal{A}_0}}\Sigma(X; H, A),
$$
where
$$
\Sigma(X; H, A) = \sum_{\substack{\Norm(\pp)\leq X \\ \mathbf{r}_4(\pp)\in A}}\prod_{\sigma\in H}\spin(\pp, \sigma).
$$
The sums $\Sigma(X; \emptyset, A)$ feature no cancellation and provide the main term in \eqref{dFR}. 
It then remains to show that 
\begin{equation}\label{mainGoalJDS}
\Sigma(X; H, A) = o(X/\log X)
\end{equation}
for each non-empty subset $H$ of $\{\tau, \ldots, \tau^{\frac{n-1}{2}}\}$ and each $A\in\mathcal{A}_0$. To this end, we will use a slight generalization of Theorem~1 of \cite{JDS}.

We cannot apply the results of \cite{JDS} directly for two reasons. First, the class number $h$ of $K$ need not be $1$ -- this forces us to relate $\spin(\aaa, \sigma)$ to quadratic residue symbols involving elements ``smaller'' than the totally positive generators of $\aaa^h$. Second, the sums $\Sigma(X; H, A)$ feature the additional restriction that $\mathbf{r}_4(\pp)\in A$. Since $A$ is a collection of congruence classes modulo $4\OO$, the restriction that $\mathbf{r}_4(\pp)\in A$ is reminiscent of the restriction to a congruence class as in \cite[Theorem 1.2, p.\ 699]{FIMR}. Despite the similarity, there is a technical difference that we will explain. 

Fix once and for all a set $\mathcal{C}$ consisting of $h$ unramified degree-one prime ideals in $\OO$ that is a complete set of representatives of ideal classes in the class group of $K$; its existence is guaranteed by an application of the Chebotarev Density Theorem to the Hilbert class field of~$K$. 

Now suppose that $\aaa$ is a non-zero ideal in $\OO$ coprime to $\prod_{\pp\in\mathcal{C}}\Norm(\pp)$, and let $\alpha$ denote a totally positive generator of $\aaa^h$. As $h$ is odd, the set $\{\pp^2: \pp\in \mathcal{C}\}$ is also a complete set of representatives. Hence there exists $\pp\in\mathcal{C}$ such that $\aaa \pp^2$ is a principal ideal.  Let $\pi$ denote a totally positive generator of the ideal $\pp^h$. Let $\alpha_0$ denote a totally positive generator of $\aaa \pp^2$. Then $\alpha_0^h$ and $\alpha\pi^2$ are both totally positive generators of the ideal $(\aaa\pp^{2})^h$, so we have
\begin{equation}\label{eq:transform}
\spin(\aaa, \sigma) = \leg{\alpha}{\sigma(\aaa)} = \leg{\alpha\pi^2}{\sigma(\aaa\pp^2)} = \spin(\aaa\pp^2, \sigma) = \leg{\alpha_0^h}{\sigma(\aaa\pp^2)} =  \leg{\alpha_0}{\sigma(\alpha_0)},
\end{equation}
since $h$ is odd. Note that for each $\pp\in\mathcal{C}$ there is a bijection
\begin{multline}\label{eq:KeyBijection}
\{\aaa\subset \OO: \Norm(\aaa)\leq x, \aaa\pp^2\text{ is principal}\} \\ \simeq \{\alpha_0\in\DD: \Norm(\alpha_0)\leq x\Norm(\pp)^2, \alpha_0\equiv 0\bmod \pp^2\}
\end{multline}
given by $\aaa\mapsto \alpha_0$ as above, and where $\mathcal{D}$ is a set of totally positive elements in $\OO$ defined in~\cite[(4.2), p.713]{FIMR}. Moreover, $\mathbf{r}_4(\aaa)$ is the class in $\MK$ of a totally positive generator of $\aaa^h$, i.e., the class of $\alpha$ in $\MK$. Since squares vanish in $\MK$, the classes of $\alpha$ and $\alpha\pi^2$, and so also of $\alpha_0^h$, coincide in $\MK$. Hence, if $A$ is a $G$-orbit, then 
\begin{equation}\label{eq:r0Bijection}
\mathbf{r}_4(\aaa)\in A\quad\text{ if and only if }\quad \alpha_0^h\in A.
\end{equation}
We will now prove the following adaptation of \cite[Theorem~1, p.\ 2]{JDS}.

\begin{thm}\label{mainJointDist}
With notation as above, let $H$ be a non-empty subset of $\{\tau, \ldots, \tau^{\frac{n-1}{2}}\}$. Assume Conjecture~$C_{\eta}$ holds true for $\eta = 1/(|H|n)$ with $\delta = \delta(\eta) > 0$ (see \cite[p.\ 7]{JDS}). Let $\epsilon > 0$ be a real number.  Then for all $X\geq 2$, we have
$$
\Sigma(X; H, A) \ll X^{1 - \frac{\delta}{54|H|^2n(12n+1)} + \epsilon},
$$ 
where the implied constant depends only on $\epsilon$ and $K$.
\end{thm}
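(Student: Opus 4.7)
The plan is to adapt the argument of \cite[Theorem~1]{JDS} to our setting, where two new features arise beyond the Koymans--Milovic framework: the class number $h$ need not equal $1$, and the sum is restricted by the orbit condition $\mathbf{r}_4(\pp)\in A$. Note that the hypothesis $H\subseteq\{\tau,\tau^2,\ldots,\tau^{(n-1)/2}\}$ guarantees $\sigma\in H\Rightarrow \sigma^{-1}\notin H$, so the asymmetry condition of \cite{JDS} on $H$ holds, and the product $\prod_{\sigma\in H}\spin(\pp,\sigma)$ is of the type their machinery can handle.

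My first step is to rewrite $\Sigma(X;H,A)$ by partitioning the outer sum according to the ideal class of $\pp$. Since $h$ is odd, squaring is a bijection on $\CL$, so for each $\pp$ there is a unique $\qq\in\mathcal{C}$ with $\pp\qq^2$ principal. Applying the bijection \eqref{eq:KeyBijection}, the identity \eqref{eq:transform}, and the equivalence \eqref{eq:r0Bijection} yields
$$
\Sigma(X;H,A)=\sum_{\qq\in\mathcal{C}}\sum_{\substack{\alpha_0\in\DD\\ \Norm(\alpha_0)\leq X\Norm(\qq)^2\\ \alpha_0/\qq^2\text{ prime},\ \alpha_0^h\in A}}\prod_{\sigma\in H}\leg{\alpha_0}{\sigma(\alpha_0)}.
$$
Using again that $h$ is odd, the map $x\mapsto x^h$ is a bijection on $\MK$, so $\alpha_0^h\in A$ is equivalent to $\alpha_0$ lying in a fixed union $A'$ of classes in $\MK$. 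Detecting $A'$ via orthogonality of characters of the finite group $\MK$ reduces the problem to bounding, for each character $\chi$ of $\MK$ (a quadratic character of $(\OO/4\OO)^\times$ of bounded conductor) and each $\qq\in\mathcal{C}$, a sum of the form
$$
T_{\chi,\qq}(X)=\sum_{\substack{\alpha_0\in\DD\\ \Norm(\alpha_0)\leq X\Norm(\qq)^2\\ \alpha_0/\qq^2\text{ prime}}}\chi(\alpha_0)\prod_{\sigma\in H}\leg{\alpha_0}{\sigma(\alpha_0)}.
$$

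Each $T_{\chi,\qq}(X)$ is precisely of the shape treated in \cite{JDS}. The prime restriction on $\alpha_0/\qq^2$ is stripped using a Vaughan-type identity, producing Type~I and Type~II bilinear sums of Legendre symbols in $\alpha_0$, twisted by the fixed character $\chi$. The Type~I sums are bounded via Conjecture~$C_\eta$ with $\eta=1/(|H|n)$, which controls the short character sums produced by $\leg{\cdot}{\sigma(\alpha_0)}$; the Type~II sums are handled through Cauchy--Schwarz followed by a Weyl-differencing maneuver that exploits the asymmetry of $H$. The twist $\chi(\alpha_0)$ and the divisibility by $\qq^2$ involve only moduli of bounded norm, so they are absorbed into the character factors and contribute only $O_\epsilon(1)$ overhead. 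Tracking the constants through the successive Hölder applications (which contribute the factor $|H|^2$ in the exponent denominator) and the final Burgess-type inequality (responsible for the $n(12n+1)$ factor, analogous to Theorem~\ref{thm:FIMR}) produces the stated exponent $\delta/(54|H|^2n(12n+1))$.

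The main obstacle is in the Type~II analysis: after Cauchy--Schwarz one obtains inner sums of the form $\sum_{\alpha}\prod_{\sigma\in H}\leg{\alpha}{\sigma(\beta\beta')}$ for pairs $(\beta,\beta')$, which must oscillate for most such pairs. The asymmetry hypothesis on $H$ is exactly what guarantees the non-degeneracy of these inner characters, via the reciprocity and joint-character-sum arguments of \cite{JDS}; the extra twist $\chi$ does not interact with the combinatorial structure of the spin product, and the auxiliary factor $\qq^2$ of bounded norm only rescales error terms. With these verifications the Koymans--Milovic argument transports to the present setting with only bookkeeping changes in the final exponent.
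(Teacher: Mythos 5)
Your overall strategy matches the paper's: adapt the Koymans--Milovic argument by (a) decomposing via the fixed set of class representatives $\mathcal{C}$ to handle $h\neq 1$, (b) converting to sums over totally positive elements $\alpha_0\in\DD$ via \eqref{eq:KeyBijection}, \eqref{eq:transform}, \eqref{eq:r0Bijection}, and (c) detecting the condition $\mathbf{r}_4(\pp)\in A$. But two points differ in a way that matters.

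First, you detect the orbit condition by orthogonality of characters of $\MK$, introducing a quadratic twist $\chi(\alpha_0)$, whereas the paper splits into residue classes $\rho\bmod F$ (with $F$ a multiple of $4$, so $\rho\bmod F$ determines $\alpha^h\in A$) and sums over $\rho\in\Omega_I(A)$, respectively $(\rho_1,\rho_2)\in\Omega_{II}(A)$. These are Fourier-dual formulations, and yours is plausible, but the paper's version is the one that directly matches the congruence-restricted estimates already proved in \cite[(3.2) and (4.5)]{JDS}, so it can cite them verbatim. Your character-twist version would need a mild reworking of those estimates to accommodate $\chi$, which you assert but do not verify.

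Second, and more substantively, you perform the class decomposition and orbit detection \emph{before} applying Vaughan/Vinogradov, arriving at sums $T_{\chi,\qq}(X)$ over $\alpha_0$ with $\alpha_0/\qq^2$ prime, and then apply Vaughan to strip primality. This inverts the paper's order of operations, and the inversion is not innocuous: after Vaughan, the ideal $\aaa=(\alpha_0)/\qq^2$ is written as $\aaa=\bb\cc$, and the factors $\bb,\cc$ range over all ideal classes, so the resulting ``Type~II bilinear sums of Legendre symbols in $\alpha_0$'' are not actually indexed by elements in a single class --- another round of class decomposition is forced, together with an identity of the form \eqref{eq:transform2} to relate $\spin(\aaa\bb,\sigma)$ to the separate generators $\alpha_0,\beta_0$. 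The paper instead formulates the Vinogradov input as Propositions~\ref{prop:SumsTypeI} and \ref{prop:SumsTypeII} at the level of ideals $\aaa,\bb$ first, and only then passes to elements inside each proof; this is precisely what makes the Type~II bilinear structure clean. Your proposal does not address how the bilinear structure survives the factorization $\aaa=\bb\cc$ at the element level, which is the genuine technical issue in the Type~II case, so as written there is a gap there.
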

Note that the set $H$ above is of size at most $\frac{n-1}{2}$. Since Conjecture~$C_{\eta_1}$ implies Conjecture~$C_{\eta_2}$ whenever $\eta_1\leq \eta_2$, we see that, conditional on Conjecture~$C_{\eta}$ for $\eta = \frac{2}{n(n-1)}$, Theorem~\ref{mainJointDist} implies \eqref{mainGoalJDS} for each $G$-orbit $A\in\mathcal{A}_0$ and each non-empty subset $H\subset\{\tau, \ldots, \tau^{\frac{n-1}{2}}\}$, and hence also Theorem~\ref{thm:JoinSpinCor}. It thus remains to prove Theorem~\ref{mainJointDist}.

For a non-zero ideal $\aaa\subset \OO$ and a $G$-orbit $A$, let
$$
r(\aaa; A) = 
\begin{cases}
1 & \text{if }\mathbf{r}_4(\aaa)\in A \\
0 & \text{otherwise,}
\end{cases}
$$
and let
$$
s_{\aaa} = r(\aaa; A)\prod_{\sigma\in H}\spin(\aaa, \sigma).
$$
Then we have
$$
\Sigma(X; H, A) = \sum_{\Norm(\pp)\leq X} s_{\pp},
$$ 
where the summation is over prime ideals $\pp\subset \OO$ of norm at most $X$.

Let $F$ be the integer defined in \cite[(2.2), p.\ 5]{JDS}; it depends only on $K$. Moreover, we can choose the sets $\mathcal{C}\ell_a$ and $\mathcal{C}\ell_b$ in \cite[p.\ 5]{JDS} so that their elements are coprime to $\prod_{\pp\in\mathcal{C}}\Norm(\pp)$. Note that $F$ is divisible by $32$.

To deduce Theorem~\ref{mainJointDist}, it suffices to prove that
$$
\sum_{\substack{\Norm(\pp)\leq X \\ \pp\nmid F}} s_{\pp} \ll_{\epsilon, K} X^{1 - \frac{\delta}{54|H|^2n(12n+1)} + \epsilon}
$$
because $F$ has only finitely many prime ideal divisors. 

The proof of Theorem~\ref{mainJointDist} proceeds via Vinogradov's method, with suitable estimates necessary for the sums of type I
$$
A_{\mathfrak{m}}(x) = \sum_{\substack{\Norm{\mathfrak{a}} \leq x \\ (\mathfrak{a}, F) = 1,\ \mathfrak{m} \mid \mathfrak{a}}} s_{\aaa},
$$
where $\mathfrak{m}$ is any non-zero ideal coprime to $\tau(\mathfrak{m})$, and sums of type II
$$
B(x, y; v, w) = \sum_{\substack{\Norm(\aaa)\leq x\\ (\aaa, F) = 1}}\sum_{\substack{\Norm(\bb)\leq y\\ (\bb, F) = 1}}v_{\aaa}w_{\bb}s_{\aaa\bb},
$$
where $v = \{v_{\aaa}\}_{\aaa}$ and $w = \{w_{\bb}\}_{\bb}$ are arbitrary sequences of complex numbers of modulus bounded by $1$. By \cite[Proposition~5.2, p.\ 722]{FIMR} applied with $\vartheta = \frac{\delta}{54n|H|^2}$ and $\theta = \frac{1}{6n}$, the following two propositions imply Theorem~\ref{mainJointDist}.

\begin{prop}\label{prop:SumsTypeI}
Let $\delta = \delta(|H|n)>0$ be as in Conjecture~$C_{|H|n}$. Let $\epsilon>0$. For any non-zero ideal $\mathfrak{m}\subset \OO$, we have 
\begin{equation}\label{typeI}
\sum_{\substack{\Norm(\aaa)\leq x\\ (\aaa, F) = 1, \mm\mid \aaa}}s_{\aaa} \ll x^{1-\frac{\delta}{54n|H|^2} + \epsilon},
\end{equation}
where the implied constant depends only on $K$ and $\epsilon$.
\end{prop}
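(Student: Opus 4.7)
The plan is to adapt the Type~I argument of \cite[\S6]{FIMR}, together with its multi-spin refinement from \cite{JDS}, to accommodate our two extra features: the class number $h$ may exceed $1$, and the summand carries the congruence restriction $r(\aaa;A)$. First I would stratify the sum by ideal class $[\aaa]\in\CL$. For each class, fix the representative $\pp\in\mathcal{C}$ of the inverse class, and apply the bijection~\eqref{eq:KeyBijection} together with the identity~\eqref{eq:transform} to rewrite the sum as a sum over totally positive elements $\alpha_0\in\DD$ with $(\alpha_0)=\aaa\pp^2$, so that the spin product takes the clean form $\prod_{\sigma\in H}\leg{\alpha_0}{\sigma(\alpha_0)}$. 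Using~\eqref{eq:r0Bijection} and orthogonality of characters on $\MK$, the restriction $\alpha_0^h\in A$ unfolds into a bounded linear combination of indicators for fixed classes $\alpha_0\bmod 4\OO$, so it suffices to bound each resulting sum over a single class.

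Next, I would write $\alpha_0=\mu_0\beta_0$ where $\mu_0$ is a fixed totally positive generator of $\mm\pp^2\qq^2$ for a suitable $\qq\in\mathcal{C}$ adjusting the class of $\mm\pp^2$, so that $\beta_0$ varies freely. By bilinearity of the quadratic residue symbol,
$$
\prod_{\sigma\in H}\leg{\mu_0\beta_0}{\sigma(\mu_0\beta_0)}
= c(\mu_0)\prod_{\sigma\in H}\leg{\mu_0}{\sigma(\beta_0)}\leg{\beta_0}{\sigma(\mu_0)}\prod_{\sigma\in H}\leg{\beta_0}{\sigma(\beta_0)},
$$
with $c(\mu_0)$ depending only on $\mm$. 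Applying the quadratic reciprocity law for $K$ \cite[Theorem~VI.8.3]{NeukirchANT} to each cross symbol $\leg{\mu_0}{\sigma(\beta_0)}$ flips it to $\leg{\sigma^{-1}(\mu_0)}{\beta_0}$ up to Hilbert-symbol corrections at primes above $2$ and $\infty$, as in the proof of Lemma~\ref{QR}. Since $\beta_0$ is totally positive and its class modulo $4\OO$ has been fixed in the previous step, these corrections contribute a constant global sign, and the cross terms collapse to a real Hecke character $\chi_\mm(\beta_0)$ of conductor dividing $4\prod_{\sigma\in H}\sigma(\mm)\sigma^{-1}(\mm)$.

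It remains to bound
$$
\sum_{\beta_0}\chi_\mm(\beta_0)\prod_{\sigma\in H}\leg{\beta_0}{\sigma(\beta_0)}
$$
over $\beta_0\in\DD$ subject to the linear congruence and coprimality constraints inherited from above. The diagonal product is precisely the joint-spin object treated in \cite{JDS}, and our hypothesis $H\subset\{\tau,\dots,\tau^{(n-1)/2}\}$ guarantees that $\sigma^{-1}\notin H$ whenever $\sigma\in H$, so the reciprocity and residue-symbol manipulations from that paper apply verbatim. Partitioning $\DD$ into thin boxes on which $\Norm_{K/\QQ}$ is coordinatewise monotone, as in \cite[\S5--6]{FIMR}, the inner sum descends to a short sum of a non-principal real Dirichlet character of conductor $q\ll\Norm(\mm)^{O(|H|)}$ over an interval of length $\ll x/\Norm(\mm)$. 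Invoking Conjecture~$C_\eta$ with $\eta=1/(|H|n)$ on each box, followed by trivial summation over $\mu_0$, $\pp$, and the bounded family of residue classes, yields the bound $\ll x^{1-\frac{\delta}{54n|H|^2}+\epsilon}$.

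The main technical obstacle is the uniform control of the Hilbert-symbol corrections and of the conductor of $\chi_\mm$ as $\mm$ varies over arbitrary non-zero ideals: when $\mm$ and some $\sigma(\mm)$ share prime factors, the character can fail to be primitive and one must analyze local factors directly, and the $r(\aaa;A)$-restriction must be shown to integrate cleanly with the descent to $\ZZ$ without inflating the conductor beyond a fixed power of $\Norm(\mm)$. This is the principal new subtlety compared to the principal-ideal setting of \cite{FIMR,JDS}.
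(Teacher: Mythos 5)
Your opening steps match the paper: stratifying by the representative $\pp\in\mathcal{C}$ with $\aaa\pp^2$ principal, converting via the bijection~\eqref{eq:KeyBijection} and the identity~\eqref{eq:transform} to a sum over totally positive $\alpha_0\in\DD$, and then resolving the condition $\alpha_0^h\in A$ into finitely many congruence classes modulo a fixed even modulus (the paper does this directly with classes $\rho\bmod F$ and the set $\Omega_I(A)$ rather than via character orthogonality, but these are cosmetically equivalent). After that point, however, you part ways with the paper in a way that both overcomplicates the argument and introduces a gap.

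The paper's key observation is that once the congruence splitting has been performed, the remaining inner sum
\[
A'_{\mm}(x;\rho)=\sum_{\substack{\alpha\in\DD,\ \Norm(\alpha)\leq x\\ \alpha\equiv\rho\bmod F,\ \alpha\equiv 0\bmod\mm}}\prod_{\sigma\in H}\leg{\alpha}{\sigma(\alpha)}
\]
is \emph{literally identical} to the sum $A(x,\rho)$ in \cite[(3.2), p.\ 9]{JDS}, and so the Type~I bound of \cite[Section~3]{JDS} applies verbatim. There is no need to re-derive the reciprocity manipulations, the short character sum, or the application of Conjecture~$C_\eta$: all of that has already been done in \cite{JDS}, uniformly in $\mm$. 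You instead propose to re-prove the bound from scratch via the factorization $\alpha_0=\mu_0\beta_0$, reciprocity, and an auxiliary Hecke character $\chi_\mm$. This reproduces work that can simply be quoted.

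Beyond being unnecessarily long, your proposed factorization has a genuine flaw. You take $\mu_0$ to generate $\mm\pp^2\qq^2$ while $(\alpha_0)=\aaa\pp^2$ with $\mm\mid\aaa$. Writing $\aaa=\mm\bb$ gives $(\alpha_0)/(\mu_0)=\bb\qq^{-2}$, which is a fractional ideal that is integral only if $\qq^2\mid\bb$. Thus $\beta_0=\alpha_0/\mu_0$ does not range over a lattice of integers, let alone ``vary freely.'' One could try to patch this by introducing a second representative $\qq'\in\mathcal{C}$ with $\bb\qq'^2$ principal, but then $\qq'$ varies with $\bb$ and the resulting decomposition is no longer the clean two-variable factorization you rely on; the correct way to handle the divisibility $\mm\mid\aaa$ is to keep it as the lattice constraint $\alpha_0\equiv 0\bmod[\mm,\pp^2]$, precisely as in \cite{FIMR,JDS}. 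Relatedly, the conductor of your $\chi_\mm$ grows with $\Norm(\mm)$; you flag this uniformity issue as ``the main technical obstacle'' but do not resolve it, whereas the paper sidesteps it entirely because the quoted bound from \cite{JDS} is already uniform over all $\mm$.
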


\begin{prop}\label{prop:SumsTypeII}
Let $\epsilon>0$. For any pair of sequences of complex numbers $\{v_{\aaa}\}$ and $\{w_{\bb}\}$ indexed by non-zero ideals in $\OO$ and satisfying $|v_{\aaa}|, |v_{\bb}|\leq 1$,  we have
\begin{equation}\label{typeII}
\sum_{\substack{\Norm(\aaa)\leq x \\ (\aaa, F)=1}}\sum_{\substack{\Norm(\bb)\leq y\\ (\bb, F)=1}}v_{\aaa}w_{\bb}s_{\aaa\bb} \ll \left(x^{-\frac{1}{6n}}+y^{-\frac{1}{6n}}\right)\left(xy\right)^{1+\epsilon},
\end{equation}
where the implied constant depends only on $K$ and $\epsilon$.
\end{prop}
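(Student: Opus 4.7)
The plan is to adapt the type~II argument from \cite[Section~8]{JDS} to the present setting, the new feature being the coupling introduced by the orbit indicator $r(\aaa\bb;A)$. First I would exploit multiplicativity. If $\alpha$ and $\beta$ are totally positive generators of $\aaa^h$ and $\bb^h$, then $\alpha\beta$ is a totally positive generator of $(\aaa\bb)^h$, so
\[
\spin(\aaa\bb,\sigma)=\spin(\aaa,\sigma)\,\spin(\bb,\sigma)\,\left(\frac{\beta}{\sigma(\aaa)}\right)\!\left(\frac{\alpha}{\sigma(\bb)}\right).
\]
Since $\mathbf{r}_4$ is multiplicative on ideals coprime to $2$, the membership $\mathbf{r}_4(\aaa)\mathbf{r}_4(\bb)\in A$ decomposes as a finite $\OO(1)$-sum of products of single-variable indicators $\mathbf{1}[\mathbf{r}_4(\aaa)=\alpha_0]\,\mathbf{1}[\mathbf{r}_4(\bb)=\beta_0]$ with $\alpha_0\beta_0\in A$. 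Absorbing all the single-variable factors (the individual spins and the $\mathbf{r}_4$-class indicators) into new $\ell^\infty$-bounded sequences $\tilde v_\aaa,\tilde w_\bb$ reduces the problem to estimating
\[
\Xi(x,y)=\sum_{\substack{\Norm(\aaa)\le x\\ (\aaa,F)=1}}\sum_{\substack{\Norm(\bb)\le y\\ (\bb,F)=1}}\tilde v_\aaa\tilde w_\bb\prod_{\sigma\in H}\left(\frac{\beta}{\sigma(\aaa)}\right)\!\left(\frac{\alpha}{\sigma(\bb)}\right).
\]

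Second, since $32\mid F$ guarantees that primes above $2$ are unramified in the relevant quadratic extensions, a straightforward generalization of Lemma~\ref{QR} from prime ideals to arbitrary coprime ideals applies, yielding the flips
\[
\left(\frac{\beta}{\sigma(\aaa)}\right)=\left(\frac{\sigma(\alpha)}{\bb}\right),\qquad\left(\frac{\alpha}{\sigma(\bb)}\right)=\left(\frac{\sigma(\beta)}{\aaa}\right).
\]
Consequently the cross product can be rewritten as $\left(\tfrac{N_H(\alpha)}{\bb}\right)\!\left(\tfrac{N_H(\beta)}{\aaa}\right)$ with $N_H(\gamma)=\prod_{\sigma\in H}\sigma(\gamma)$, i.e.\ as quadratic characters evaluated at $\aaa$ or $\bb$ with moduli driven by the other variable. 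Now applying Cauchy--Schwarz in the $\aaa$-variable and expanding the square produces, after some rearrangement, an inner sum of the shape $\sum_{\Norm(\aaa)\le x}\chi_{\bb_1,\bb_2}(\aaa)$ for each pair $(\bb_1,\bb_2)$ with $\Norm(\bb_i)\le y$. The diagonal pairs where the character $\chi_{\bb_1,\bb_2}$ is trivial contribute $O(xy^{1+\epsilon})$ by the divisor bound, while for the off-diagonal pairs the inner character sum is bounded by a P\'olya--Vinogradov estimate for Hecke characters of $K$, saving a power of the conductor.

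The main obstacle is the factor $h$ in the definition of spin: the flip above turns $(\beta/\sigma(\aaa))$ into a symbol modulo $\bb$ but with numerator $\sigma(\alpha)$, an element of norm $\Norm(\aaa)^h$ rather than $\Norm(\aaa)$. Directly using $\alpha$ would inflate the conductors in the character-sum step by a factor of $h$ and destroy the desired uniformity. The remedy, as in \eqref{eq:transform}--\eqref{eq:KeyBijection}, is to replace $\aaa$ by $\aaa\pp^2$ with $\pp\in\mathcal{C}$ so that $\aaa\pp^2$ is principal with a totally positive generator $\alpha_0$ of norm $\ll\Norm(\aaa)$; the identity $\spin(\aaa,\sigma)=\bigl(\tfrac{\alpha_0}{\sigma(\alpha_0)}\bigr)$ then permits all reciprocity manipulations to be carried out with generators of size $\Norm(\aaa)^{\OO(1)}$, and the sums are re-indexed by $\alpha_0\in\mathcal{D}$ as in \cite[(4.2)]{FIMR}. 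Carrying this out carefully, balancing the losses, and finally symmetrising the Cauchy--Schwarz application in $\bb$ yields the claimed bound with the symmetric saving $x^{-1/(6n)}+y^{-1/(6n)}$.
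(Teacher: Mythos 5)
Your proposal follows essentially the same route as the paper's proof: decompose by ideal-class representatives $\pp\in\mathcal{C}$, pass to totally positive $\alpha_0,\beta_0\in\DD$ via \eqref{eq:KeyBijection} and \eqref{eq:transform2}, decouple the orbit constraint $\mathbf{r}_4(\aaa\bb)\in A$ into single-variable congruence conditions (you use multiplicativity of $\mathbf{r}_4$, the paper uses residue classes mod $F$, which is the same thing since the class in $\MK$ is a condition mod $4\mid F$), absorb single-variable factors into bounded coefficients, and estimate the resulting bilinear form with its quadratic-character cross term. The only material difference is that after these reductions the paper matches the remaining sum to $B_i(x,y;\alpha_0,\beta_0)$ from \cite[p.~19]{JDS} and invokes \cite[(4.5)]{JDS} directly (whose cross factor is $\phi(\alpha_0,\beta_0)=\prod_{\sigma\in H}(\alpha_0/\sigma(\beta_0)\sigma^{-1}(\beta_0))$, equivalent by reciprocity to your $(N_H(\alpha_0)/\bb)(N_H(\beta_0)/\aaa)$), whereas you propose to re-run the Cauchy--Schwarz/character-sum argument from scratch---which amounts to reconstructing the inside of the cited black box rather than taking a genuinely different approach.
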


\subsection{Proof of Proposition~\ref{prop:SumsTypeI}}
The proof is very similar to the proof of \cite[(2.5), p.\ 7]{JDS}, so we will outline the additional arguments necessary to prove Proposition~\ref{prop:SumsTypeI}. For each non-zero ideal $\aaa$, there exists a prime ideal~$\pp\in\mathcal{C}$ such that $\aaa\pp^2$ is principal. We can thus write
$$
A_{\mathfrak{m}}(x) = \sum_{\pp\in\mathcal{C}}A_{\mm}(x; \pp),
$$
where 
$$
A_{\mm}(x; \pp) =  \sum_{\substack{\Norm(\aaa)\leq x \\ (\aaa, F) = 1,\ \mathfrak{m} \mid \aaa \\ \aaa\pp^2\text{ is principal}}} s_{\aaa}.
$$
Since $\mathcal{C}$ depends only on $K$, it now suffices to prove that
$$
A_{\mm}(x; \pp) =  \sum_{\substack{\Norm(\aaa)\leq x \\ (\aaa, F) = 1,\ \mathfrak{m} \mid \aaa \\ \aaa\pp^2\text{ is principal}}} s_{\aaa} \ll x^{1-\frac{\delta}{54n|H|^2} + \epsilon}
$$
for each $\pp\in\mathcal{C}$, where the implied constant depends only on $K$ and $\epsilon$. We now use the bijection~\eqref{eq:KeyBijection}, the formula~\eqref{eq:transform}, and the equivalence~\eqref{eq:r0Bijection} to write
$$
A_{\mm}(x; \pp) =  \sum_{\substack{\alpha_0\in \DD,\ \Norm(\alpha_0)\leq x\Norm(\pp)^2 \\ (\alpha_0, F) = 1,\ \alpha_0\equiv 0\bmod [\mm, \pp^2] \\ \alpha_0^h\in A}} \prod_{\sigma\in H}\leg{\alpha_0}{\sigma(\alpha_0)},
$$
where $[\mm, \pp^2]$ denotes the least common multiple of $\mm$ and $\pp^2$. Again, since $\mathcal{C}$ and so also the norms $\{\Norm(\pp)\}_{\pp\in\mathcal{C}}$ depend only on $K$, it suffices to prove that
\begin{equation}\label{eq:boundSumsI}
A'_{\mm}(x) =  \sum_{\substack{\alpha\in \DD,\ \Norm(\alpha)\leq x \\ (\alpha, F) = 1,\ \alpha\equiv 0\bmod \mm \\ \alpha^h\in A}} \prod_{\sigma\in H}\leg{\alpha}{\sigma(\alpha)}\ll_{K, \epsilon} x^{1-\frac{\delta}{54n|H|^2} + \epsilon}
\end{equation}
uniformly for all non-zero ideals $\mm$. We have thus removed the issue of summing terms involving $\spin(\aaa, \sigma)$ for non-principal ideals $\aaa$. It remains to handle the condition $\alpha^h\in A$. To this end, we split the sum into congruence classes modulo $F$, and we emphasize that $F$ is a multiple of $4$. We get
$$
A'_{\mm}(x) = \sum_{\substack{\rho\bmod F \\ \rho\in\Omega_I(A)}}A'_{\mm}(x; \rho),
$$
where
\begin{equation}\label{eq:compareJDS}
A'_{\mm}(x; \rho) = \sum_{\substack{\alpha\in\DD,\ \Norm(\alpha)\leq x \\ \alpha\equiv \rho \bmod F\\ \alpha\equiv 0\bmod \mm}}\prod_{\sigma\in H}\leg{\alpha}{\sigma(\alpha)}
\end{equation}
and where $\Omega_I(A)$ is the set of congruence classes $\rho$ modulo $F$ such that $(\rho, F) = 1$ and such that
$$
\alpha\equiv \rho\bmod F\Longrightarrow \alpha^h\in A.
$$
Note that $|\Omega_I(A)|\leq F$.

The sum $A'_{\mm}(x; \rho)$ in \eqref{eq:compareJDS} is identical to the sum $A(x, \rho)$ in \cite[(3.2), p.\ 9]{JDS}. Hence, the bound for $A(x, \rho)$ proved in \cite[Section 3]{JDS} carries over to $A'_{\mm}(x; \rho)$, which, in conjunction with the fact that $F$ depends only on $K$, implies the bound \eqref{eq:boundSumsI} and hence also Proposition~\ref{prop:SumsTypeI}.

\subsection{Proof of Proposition~\ref{prop:SumsTypeII}}
The proof is very similar to the proof of \cite[(2.6), p.\ 7]{JDS}, so we will outline the additional arguments necessary to prove Proposition~\ref{prop:SumsTypeII}. Given $x, y>0$ and two sequences $v = \{v_{\aaa}\}_{\aaa}$ and $w = \{w_{\bb}\}_{\bb}$ of complex numbers bounded in modulus by $1$, recall that we defined
\begin{equation}\label{BS1}
B(x, y; v, w) = \sum_{\substack{\Norm(\aaa)\leq x\\ (\aaa, F) = 1}}\sum_{\substack{\Norm(\bb)\leq y\\ (\bb, F) = 1}}v_{\aaa}w_{\bb}s_{\aaa\bb},
\end{equation}
and that our goal is to prove that 
\begin{equation}\label{BS2}
B(x, y; v, w) \ll_{K, \epsilon} \left(x^{-\frac{1}{6n}}+y^{-\frac{1}{6n}}\right)\left(xy\right)^{1+\epsilon}
\end{equation}
for all $\epsilon>0$, uniformly in $v$ and $w$. We can write
$$
B(x, y; v, w) = \sum_{\pp_1\in\mathcal{C}}\sum_{\pp_2\in\mathcal{C}}B(x, y; v, w; \pp_1, \pp_2),
$$
where, for $(\pp_1, \pp_2)\in\mathcal{C}\times\mathcal{C}$, we set
$$
B(x, y; v, w; \pp_1, \pp_2) = \sum_{\substack{\Norm(\aaa)\leq x \\ (\aaa, F) = 1\\ \aaa\pp_1^2\text{ is principal}}}\sum_{\substack{\Norm(\bb)\leq y \\  (\bb, F) = 1\\ \bb\pp_2^2\text{ is principal}}}v_{\aaa}w_{\bb}s_{\aaa\bb}.
$$
It suffices to prove the desired estimate for each of the $h^2$ sums $B(x, y; v, w; \pp_1, \pp_2)$. So fix $(\pp_1, \pp_2)\in\mathcal{C}\times\mathcal{C}$. Writing $\pi_1$, $\pi_2$, $\alpha_0$, and $\beta_0$ for the totally positive generators of the principal ideals $\pp_1^h$, $\pp_2^h$, $\aaa\pp_1^2$, and $\bb\pp_2^2$, respectively, we obtain in a similar way to \eqref{eq:transform} the formula
\begin{equation}\label{eq:transform2}
\spin(\aaa\bb, \sigma) = \leg{\alpha_0\beta_0}{\sigma(\alpha_0\beta_0)} = \leg{\alpha_0}{\sigma(\alpha_0)}\leg{\beta_0}{\sigma(\beta_0)}\leg{\alpha_0}{\sigma(\beta_0)\sigma^{-1}(\beta_0)}.
\end{equation}
Using the bijection~\eqref{eq:KeyBijection}, the formula~\eqref{eq:transform2}, and the equivalence~\eqref{eq:r0Bijection}, we deduce that
\begin{equation}
B(x, y; v, w; \pp_1, \pp_2) = \sum_{\substack{\alpha_0\in\DD\\ \Norm(\alpha_0)\leq x\Norm(\pp_1)^2 \\ (\alpha_0, F) = 1 \\ \alpha_0\equiv 0\bmod \pp_1^2}}\sum_{\substack{\beta_0\in\DD\\ \Norm(\beta_0)\leq y\Norm(\pp_2)^2 \\  (\beta_0, F) = 1\\ \beta_0 \equiv 0\bmod \pp_2^2 \\ (\alpha_0\beta_0)^h\in A}}v'_{\alpha_0}w'_{\beta_0}\phi(\alpha_0, \beta_0),
\end{equation}
where
$$
v'_{\alpha_0} = v_{(\alpha_0)/\pp_1^2}\prod_{\sigma\in H}\leg{\alpha_0}{\sigma(\alpha_0)}\quad\text{and}\quad w'_{\beta_0} = w_{(\beta_0)/\pp_2^2}\prod_{\sigma\in H}\leg{\beta_0}{\sigma(\beta_0)}
$$
and where $\phi(\cdot,\cdot)$ is the same function as the one defined in \cite[p.\ 19]{JDS}, i.e., 
$$
\phi(\alpha_0, \beta_0) = \prod_{\sigma\in H}\leg{\alpha_0}{\sigma(\beta_0)\sigma^{-1}(\beta_0)}.
$$
We further split the sum $B(x, y; v, w; \pp_1, \pp_2)$ into congruence classes modulo $F$. As $F$ is divisible by $4$, this will have the effect of separating the variables $\alpha_0$ and $\beta_0$ in the condition $(\alpha_0\beta_0)^h\in A$. We have
$$
B(x, y; v, w; \pp_1, \pp_2) = \sum_{\rho_1\bmod F}\sum_{\substack{\rho_2\bmod F \\ (\rho_1, \rho_2)\in\Omega_{II}(A)}}B(x, y; v, w; \pp_1, \pp_2; \rho_1, \rho_2),
$$
where
$$
B(x, y; v, w; \pp_1, \pp_2; \rho_1, \rho_2) = \sum_{\substack{\alpha_0\in\DD\\ \Norm(\alpha_0)\leq x\Norm(\pp_1)^2 \\ \alpha_0\equiv \rho_1\bmod F}}\sum_{\substack{\beta_0\in\DD\\ \Norm(\beta_0)\leq y\Norm(\pp_2)^2 \\  \beta_0 \equiv \rho_2\bmod F }}v''_{\alpha_0}w''_{\beta_0}\phi(\alpha_0, \beta_0).
$$
Here
$$
v''_{\alpha_0} = \ONE(\alpha_0\equiv 0\bmod \pp_1^2) \cdot v'_{\alpha_0}
$$
and
$$
w''_{\beta} = \ONE(\beta_0\equiv 0\bmod \pp_2^2)\cdot w'_{\beta_0}, 
$$
where $\ONE(P)$ is the indicator function of a property $P$, and $\Omega_{II}(A)$ is the set of $(\rho_1, \rho_2)\in(\OO/(F))^{\times}\times(\OO/(F))^{\times}$ such that 
$$
\alpha_0\equiv \rho_1\bmod F\text{ and }\beta_0\equiv\rho_2\bmod F\Longrightarrow (\alpha_0\beta_0)^h\in A.
$$
Note that $|\Omega_{II}(A)|\leq F^2$.

The sum $B(x, y; v, w; \pp_1, \pp_2; \rho_1, \rho_2)$ has the same shape as the sum $B_i(x, y; \alpha_0, \beta_0)$ in \cite[p.\ 19]{JDS}, and so the bound \cite[(4.5), p.\ 19]{JDS} implies that
$$
B(x, y; v, w; \pp_1, \pp_2; \rho_1, \rho_2)  \ll_{K, \epsilon} \left(x^{-\frac{1}{6n}}+y^{-\frac{1}{6n}}\right)\left(xy\right)^{1+\epsilon}.
$$
This finishes the proof of Proposition~\ref{prop:SumsTypeII} and hence also of Theorem~\ref{mainJointDist}.

\section{Proof of Main Results}

\DRPformulas*

\begin{proof} 
By Theorem \ref{thm:pmdensityformulas}\switchAprint{thm:pmdensityformulas}, $d(R_\pm|S_\pm)= \#\ker(\star_\pm)/2^{(n-1)}$. Then $d(R_\pm|S_\pm)=s_\pm/2^{(n-1)}$ by Proposition \ref{prop:Starlight}\switchAprint{prop:Starlight}. By Theorem \ref{thm:JoinSpinCor}\switchAprint{thm:JoinSpinCor}, $d(F_\pm|R_\pm) = 2^{-(n-1)/2}$. Therefore
\[
    d(F_\pm|S_\pm) =d(F_\pm|R_\pm)d(R_\pm|S_\pm)= \frac{s_\pm}{2^{3(n-1)/2}}.
\]
Since $d(F|S)=d(F_+|S_+)d(S_+|S)+d(F_-|S_-)d(S_-|S)$, and $d(S_\pm|S)=1/2$,
\[
d(F|S)=\frac{s_++s_-}{2^{(3n-1)/2}}.
\qedhere
\]
\end{proof}

Theorem \ref{thm:DRP}\switchAprint{thm:DRP} settles Conjecture~1.1 in \cite{APSR}. This conjecture was originally stated for number fields $K$ which in addition to satisfying properties (C1)-(C4), were also assumed to have prime degree. While as originally stated, this assumption is necessary, it is artificial here. In \cite{APSR}, $m_K$ is defined as the number of non-trivial $\Gal(K/\QQ)$-orbits of $\mathbf{M}_4$ with representative $\alpha\in\OO$ such that $(\alpha,\alpha^\sigma)_2=1$. Let $s$ denote the number of \textit{elements} of $\mathbf{M}_4$ with representative $\alpha\in\OO$ such that $(\alpha,\alpha^\sigma)_2=1$. When $n$ is prime, $s=m_Kn+1$. 

 Let $E$ denote the set of rational primes $p$ such that for $\pp$ a prime of $K$ above $p$, $\spin(\pp,\sigma)=1$ for all non-trivial $\sigma\in\Gal(K/\QQ)$. For a fixed sign let $E_\pm$ denote the set of primes of $E$ congruent to $\pm1\bmod 4$. 
 
 Conjecture~1.1 in \cite{APSR} made two assertions, one regarding the density $d(E|S)$ of such primes restricted to those splitting completely in $K/\QQ$ and one regarding the overall density $d(E)$ of such primes. The assertion regarding the restricted density is correct and the assertion regarding the overall density is slightly off due to a very simple oversight in the case in which $p$ is not assumed to split completely in $K/\QQ$. 
 
\begin{thm}[{\cite[Conjecture~1.1]{APSR}}] Let $K$ be a number field with prime degree satisfying properties (C1)-(C4). Then 
\[
d(E|S) = \frac{s}{2^{(3n-1)/2}}, \quad d(E) = \frac{s}{n2^{(3n-1)/2}},
\]
\[
d(E_\pm|S_\pm) = \frac{s_\pm}{2^{3(n-1)/2}}, \quad \text{and} \quad d(E_\pm) = \frac{s_\pm}{n2^{(3n-1)/2}}.
\]
When $n$ is prime, $s=m_Kn+1$.
\end{thm}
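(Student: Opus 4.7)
The plan is to derive the four density formulas directly from Theorem~\ref{thm:DRP}, together with a brief analysis of spins at primes that do not split completely, and to obtain the identity $s = m_K n + 1$ from an orbit count on $\mathbf{M}_4$.

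First I would observe that $E \cap S = F$: for $p \in S$, Corollary~\ref{cor:SpinRes} says $p \in F$ iff $\spin(\pp,\sigma) = 1$ for every non-trivial $\sigma \in \Gal(K/\QQ)$ and any prime $\pp$ of $K$ above $p$, which is exactly the condition defining $E$. Next I claim $E \subseteq S$. Indeed, if an odd rational prime $p$ does not split completely in $K/\QQ$, the decomposition group at some prime $\pp \mid p$ is non-trivial, so there exists a non-trivial $\sigma \in \Gal(K/\QQ)$ with $\pp^{\sigma} = \pp$. For $\alpha$ a totally positive generator of $\pp^h$, we have $\alpha \in \pp = \pp^{\sigma}$, so $\spin(\pp,\sigma) = \left(\alpha/\pp^{\sigma}\right) = 0 \neq 1$. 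Hence $p \notin E$, and so $E = F$ and $E_{\pm} = F_{\pm}$ as subsets of $S$ and $S_{\pm}$ respectively.

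The restricted densities $d(E|S) = s/2^{(3n-1)/2}$ and $d(E_{\pm}|S_{\pm}) = s_{\pm}/2^{3(n-1)/2}$ then follow immediately from Theorem~\ref{thm:DRP}, noting that $s = s_+ + s_-$. For the unrestricted densities, since $E \subseteq S$ and $E_{\pm} \subseteq S_{\pm}$, I would write $d(E) = d(E|S) \cdot d(S)$ and $d(E_{\pm}) = d(E_{\pm}|S_{\pm}) \cdot d(S_{\pm})$. By the Chebotarev Density Theorem applied to $K/\QQ$, we have $d(S) = 1/n$, and since $[K:\QQ] = n$ is odd, the fields $K$ and $\QQ(\zeta_4)$ are linearly disjoint over $\QQ$, so Chebotarev applied to their compositum gives $d(S_{\pm}) = 1/(2n)$. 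Multiplying produces the remaining two formulas $d(E) = s/(n \cdot 2^{(3n-1)/2})$ and $d(E_{\pm}) = s_{\pm}/(n \cdot 2^{(3n-1)/2})$.

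Finally, to establish $s = m_K n + 1$ when $n$ is prime, I would count orbits of $\Gal(K/\QQ) \cong \ZZ/n$ acting on $\mathbf{M}_4$. Since $n$ is prime, every orbit has size $1$ or $n$, and by Proposition~\ref{prop:Mundy:ext} the fixed points are exactly $\pm 1$. The class $+1$ trivially satisfies the Hilbert-symbol condition defining $\star$, while by Lemma~\ref{minusone} we have $(-1,-1)_2 = -1$, so $-1$ does not. The remaining $(2^n - 2)/n$ orbits all have size $n$, and exactly $m_K$ of them consist of classes satisfying the condition (by the definition of $m_K$), contributing $m_K n$ elements to $s$. Summing gives $s = m_K n + 1$. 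The main obstacle is conceptual rather than technical: one has to correctly identify that the overall density $d(E)$ in the original \cite{APSR} conjecture was incorrectly stated, and recognize that the fix comes from the observation that $E$ is actually contained in $S$, which in turn reduces everything to Theorem~\ref{thm:DRP} and a routine Chebotarev computation.
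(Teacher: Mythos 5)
Your proposal is correct and follows essentially the same path as the paper: you identify $E\cap S = F$ via Corollary~\ref{cor:SpinRes}, show $E\subseteq S$ by noting that $\spin(\pp,\sigma)=0$ when $\pp^\sigma=\pp$ for non-trivial $\sigma$, invoke Theorem~\ref{thm:DRP} for the restricted densities, multiply by $d(S)=1/n$ and $d(S_\pm)=1/(2n)$ from Chebotarev for the unrestricted ones, and count orbits of the $\ZZ/n$-action on $\mathbf{M}_4$ (with the fixed points $\pm 1$ handled via Lemma~\ref{minusone}) to get $s=m_Kn+1$. The only cosmetic difference is that the paper cites $\star(1)=1$, $\star(-1)=-1$ from the APSR reference rather than re-deriving them as you do, but the argument is the same.
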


\begin{proof}
If $\pp$ is a prime of $K$ that does not split completely in $K/\QQ$, then for some non-trivial $\sigma\in\Gal(K/\QQ)$, $\pp^\sigma=\pp$ so $\spin(\pp,\sigma)=0$. Therefore $E\subseteq S$ so this $E$ is exactly the $F$ studied in Theorem \ref{thm:DRP}\switchAprint{thm:DRP} and  $E_\pm=F_\pm$.

 Where $s_\pm$ are as given in Theorem \ref{thm:DRP}\switchAprint{thm:DRP}, $s=\#\ker(\star)=s_++s_-$ by Proposition \ref{prop:Starlight}\switchAprint{prop:Starlight}. That $d(E|S)= s/{2^{(3n-1)/2}}$ and $d(E_\pm|S_\pm) = s_\pm/{2^{3(n-1)/2}}$ then follows from Theorem \ref{thm:DRP}\switchAprint{thm:DRP}.

By the Chebotarev Density Theorem, $d(S)=1/n$ and $d(S_\pm)=1/(2n)$ so breaking up the overall density as $d(E)=d(E\cap I|I)d(I)+d(E\cap S|S)d(S)$, we see that $d(E) = s/{n2^{(3n-1)/2}}$. Similarly, $d(E_\pm)={s_\pm}/{n2^{(3n-1)/2}}$.

When $n$ is prime, each non-trivial $\Gal(K/\QQ)$-orbit of $\mathbf{M}_4$ has $n$ elements. As for the trivial orbits, as in \cite[Lemma~5.2]{APSR}, $\star(1)=1$ and $\star(-1)=-1$. Therefore when $n$ is prime, $s=\#\ker(\star)=m_Kn+1$.
\end{proof}

\DRPcubic*

\begin{proof}
For $K$ a cyclic cubic number field with odd class number, by Theorem V in \cite{AF}, all signatures are represented by units.
%so by Lemma \ref{RCFh}\switchAprint{RCFh},

Two units are equivalent in $\OO^\times/\OO^\times_+$ exactly when they share the same signature. There are $2^{3}$ signatures. Since $(\OO^\times)^2 \subseteq \OO^\times_+$ and $\OO^\times/(\OO^\times)^2 \ism (\ZZ/2)^{3}$, it is therefore the case that all signatures are represented by units exactly when $(\OO^\times)^2$ and $\OO^\times_+$ coincide. It is a consequence of the classical Burgess's inequality \cite{Burgess} that Conjecture~$C_{\eta}$ is true for $m=3$, as is shown in Section 9 of \cite{FIMR}.
Therefore the result follows from  Theorem \ref{thm:DRP}\switchAprint{thm:DRP}.
\end{proof}

\section*{Acknowledgements}

This research was funded by the Max Planck Institute for Mathematics, Cornell University, and ERC grant agreement No.\ 670239. The authors would also like to thank Andrew Granville, Brian Hwang, Christian Maire, Pieter Moree, and Ravi Ramakrishna.

\bibliographystyle{alpha}
\bibliography{DRPbib}

\end{document}